\definecolor{invisible}{RGB}{245,245,245}
\theoremstyle{plain}
\newtheorem{Theorem}{Theorem}[section]
\newtheorem{conjecture}[Theorem]{Conjecture}
\newtheorem{Lemma}[Theorem]{Lemma}
\newtheorem{Proposition}[Theorem]{Proposition}
\newtheorem{Corollary}[Theorem]{Corollary}
\theoremstyle{definition}
\newtheorem{definition}[Theorem]{Definition}
\numberwithin{equation}{section}
\newcommand*\PrintSkips[1]{%
  \typeout{In #1:}%
  \typeout{\@spaces above: \the\abovecaptionskip}%
  \typeout{\@spaces below: \the\belowcaptionskip}%
}
\newcommand{\X}{\mathrm{X}}
\newcommand{\Y}{\mathrm{Y}}
\newcommand{\M}{\mathcal{M}}
\newcommand{\D}{\mathcal{D}}
\newcommand{\U}{\mathcal{U}}
\newcommand{\C}{\mathcal{C}}
\renewcommand{\L}{\mathcal{L}}
\renewcommand{\P}{\mathcal{P}}
\newcommand{\V}{\mathcal{V}}
\newcommand{\Q}{\mathcal{Q}}
\newcommand{\A}{\mathcal{A}}
\newcommand{\T}{\mathcal{T}}
\newcommand{\B}{\mathcal{B}}
\newcommand{\p}{\pi}
\renewcommand{\a}{\alpha}
\begin{document}

\title{New classes of minimal knot diagrams}
\author{Ilya Alekseev}
\thanks{This work was supported by the Russian Science Foundation (project no. 19-11-00151)}
\address{Leonhard Euler International Mathematical Institute in Saint Petersburg, 14th Line 29B, Vasilyevsky Island, Saint Petersburg, 199178, Russia}
\email{ilyaalekseev@yahoo.com}

\begin{abstract}
We describe a new class of minimal link diagrams. This class includes certain alternating diagrams, the standard diagrams of all torus links, and numerous homogeneous diagrams whose minimality has not been proven before. Besides, we describe a new larger class of link diagrams with the least number of Seifert circles among all diagrams of a given link. Our approach refers to the Morton--Franks--Williams inequality.
\end{abstract}

\maketitle

\ \vspace{-1.2cm}

\section{Introduction}

The central concepts of the present paper are minimal link diagrams and the crossing number of links. 
A diagram $\D$ of a link~$\L$ is said to be {\it minimal} if $\D$ has the least number of crossings among all diagrams of $\L$. The number of crossings of a minimal diagram of~$\L$ is called the {\it crossing number} of $\L$.

In this paper, we assume that all link diagrams are oriented, and for standard definitions, we mostly use the conventions of \cite{CromwellBook}. 

At this moment, several remarkable phenomena were discovered concerning minimal diagrams. In particular, certain easy to verify visual properties of link diagrams guarantee the minimality. The examples include reduced alternating diagrams, adequate diagrams, reduced Montesinos diagrams, and closures of certain positive braid diagrams due to~J.~Gonz\`{a}lez-Meneses and P. M. G. Manch\`{o}n. 

In the present paper, we introduce {\it locally twisted} link diagrams, which extend the above list. To~state our main result, we require an additional notion. In \cite{Cromwell89}, P. R. Cromwell introduces the class of {\it homogeneous} link diagrams, which contains both alternating and positive ones, and proposes studying the crossing number of the corresponding links. We~provide a sufficient condition of minimality for homogeneous diagrams.

\begin{Theorem}\label{Theorem1}
All locally twisted homogeneous link diagrams are minimal.
\end{Theorem}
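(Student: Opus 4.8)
The plan is to estimate the crossing number of $\L$ from below through the Morton--Franks--Williams circle of ideas for the HOMFLY polynomial $P_\L(v,z)$, and to exhibit a locally twisted homogeneous diagram $\D$ as a common extremal case for the two relevant inequalities. Write $c(\D)$ and $s(\D)$ for the numbers of crossings and of Seifert circles of a diagram $\D$. Two of Morton's inequalities hold for \emph{every} diagram $\D'$ of $\L$:
\[
\max\deg_z P_\L \;\le\; c(\D') - s(\D') + 1, \qquad \operatorname{breadth}_v P_\L \;\le\; 2\bigl(s(\D') - 1\bigr),
\]
the former bounding the $z$-span by the first Betti number of the Seifert-algorithm surface, the latter being the MFW bound on the number of Seifert circles. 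It therefore suffices to establish two sharpness statements for the given diagram $\D$:
\[
\text{(i)}\quad \max\deg_z P_\L = c(\D) - s(\D) + 1, \qquad\qquad \text{(ii)}\quad \operatorname{breadth}_v P_\L = 2\bigl(s(\D) - 1\bigr).
\]

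\emph{Reduction to \textup{(i)} and \textup{(ii)}.} Granting (i) and (ii): for an arbitrary diagram $\D'$ of $\L$, (ii) together with the second Morton inequality applied to $\D'$ gives $s(\D) = \tfrac12\operatorname{breadth}_v P_\L + 1 \le s(\D')$, so $\D$ already realizes the least number of Seifert circles among all diagrams of $\L$; while (i) together with the first Morton inequality applied to $\D'$ gives $c(\D) - s(\D) + 1 = \max\deg_z P_\L \le c(\D') - s(\D') + 1$, i.e. $c(\D) - c(\D') \le s(\D) - s(\D') \le 0$. Hence $c(\D) \le c(\D')$ for every diagram $\D'$ of $\L$, which is precisely the minimality of $\D$.

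\emph{The two sharpness statements.} Statement (i) is where homogeneity enters: it asserts that the top $z$-coefficient of $P_\L$, computed from $\D$, does not cancel, so that $\max\deg_z P_\L$ equals the Betti datum $c(\D)-s(\D)+1$ of the Seifert surface of $\D$; this is in the spirit of Cromwell's analysis of homogeneous links, and I would deduce it from the homogeneous block decomposition of $\D$, handling $\min\deg_z P_\L$ separately for multi-component $\L$ since only the top $z$-degree is used above. Statement (ii) is where the hypothesis ``locally twisted'' does its work, and I expect it to be the main obstacle: one must pin down the extreme-$v$ part of $P_\L$ from the diagram $\D$ and show it is nonzero, i.e. that $\D$ attains the MFW bound on the Seifert circle number. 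The natural route is a skein or state-sum computation retaining only the highest and lowest powers of $v$, organised along the twist regions of $\D$ via an induction that collapses one twist region at a time, the ``locally twisted'' condition guaranteeing at each step that the extremal $v$-terms do not annihilate one another. If the preceding results of the paper already single out locally twisted diagrams as a class on which MFW is sharp (the announced ``least number of Seifert circles'' class), then (ii) is exactly that input, and the content of the present theorem is the combination above together with (i).
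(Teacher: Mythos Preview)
Your proposal is correct and follows essentially the same route as the paper: the paper reduces Theorem~\ref{Theorem1} to Corollary~\ref{Corollary1} via Proposition~\ref{SomeHomogeneousAreMinimal}, whose proof is precisely your ``Reduction to (i) and (ii)'' argument combining Morton's $z$-degree bound (sharp for homogeneous diagrams by Cromwell) with optimality of $\D$; and Corollary~\ref{Corollary1} is deduced from Theorem~\ref{Theorem2}, which is exactly your statement (ii). Your closing remark correctly anticipates that (ii) is supplied by Theorem~\ref{Theorem2}, whose proof (Sections~4--6) proceeds not by induction on twist regions as you speculate but via special coherent resolution trees and castle structures adapted from~\cite{BIRAL}.
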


To define locally twisted diagrams, we introduce link templates. To be precise, these diagrams involve specific templates, referred to as {\it knitted}.
The idea of the templates is that one can obtain each link diagram by arranging several braid diagrams on the plane and connecting them appropriately. 
In this paper, we consider homogeneous braid diagrams.
One divides each such diagram into {\it uniform layers}. 
Roughly speaking, a link diagram is locally twisted if it admits a knitted template such that each of the uniform layers of the corresponding braid diagrams is twisted enough in a certain sense.

To prove Theorem~\ref{Theorem1}, we establish 
a more general result (Theorem \ref{Theorem2}), which refers to the notion of Seifert circles.

We say that a diagram $\D$ of a link~$\L$ is {\it optimal} if $\D$ has the least number of Seifert circles among all diagrams of $\L$. The number of Seifert circles of an optimal diagram of $\L$ equals the braid index of $\L$ (see \cite[Theorem 3]{Yamada}).

A well-known inequality referred to as the Morton--Franks--Williams inequality relates the breadth of the skein polynomial of the link given by a diagram $\D$ with the number of Seifert circles of $\D$. If the Morton--Franks--Williams inequality is sharp for $\D$, then $\D$ is optimal.

\begin{Theorem}\label{Theorem2}
The Morton--Franks--Williams inequality is sharp for all locally twisted link diagrams.
\end{Theorem}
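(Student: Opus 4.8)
The plan is to prove Theorem~\ref{Theorem2} by establishing a two-sided bound on the $v$-breadth of the skein (HOMFLY) polynomial $P_\D(v,z)$ that matches, for a locally twisted diagram $\D$, the upper and lower bounds coming from the Morton--Franks--Williams inequality, so that both become equalities simultaneously. Recall that the Morton--Franks--Williams inequality, in the form suited to Seifert circles, states that if $s(\D)$ is the number of Seifert circles and $w(\D)$ the writhe, then
\begin{equation*}
1 - s(\D) + w(\D) \;\le\; \min\deg_v P_\D \;\le\; \max\deg_v P_\D \;\le\; s(\D) - 1 + w(\D),
\end{equation*}
so that $\operatorname{breadth}_v P_\D \le 2(s(\D)-1)$, with equality (sharpness) precisely when both outer inequalities are tight. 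Hence I would reduce Theorem~\ref{Theorem2} to exhibiting, for a locally twisted diagram, a term in $P_\D$ realizing each extreme degree.

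First I would unwind the definition of a locally twisted diagram via its knitted template: the diagram is assembled from homogeneous braid diagrams $\beta_1,\dots,\beta_m$ arranged on the plane and connected, and each $\beta_i$ is cut into uniform layers, each of which (by the locally twisted hypothesis) carries enough crossings of a fixed sign to be, in a suitable sense, ``positive or negative enough.'' The key structural fact I would isolate is that within each uniform layer the crossings of one sign dominate so strongly that the skein recursion, applied crossing-by-crossing within that layer, produces a distinguished monomial that cannot be cancelled: smoothing all crossings of the dominant sign contributes the extremal power of $v$, while the ``switched'' and ``smoothed-the-other-way'' terms from the skein relation strictly lower the relevant $v$-degree. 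This is the same mechanism that makes the MFW inequality sharp for positive braid closures and for the homogeneous examples of Gonz\`alez-Meneses and Manch\`on, and the locally twisted condition is precisely what is needed to push it through layer by layer and then template-block by template-block.

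Concretely, the main computation would be an induction on the total number of crossings in the non-dominant positions across all layers. In the base case every layer is a genuine positive (or negative) braid word, the template glues these into a diagram whose Seifert circle count equals the braid strand count, and sharpness is classical (Morton; Franks--Williams). For the inductive step, pick a crossing in a uniform layer; apply the skein relation $v^{-1}P_{+} - vP_{-} = zP_0$; observe that changing the crossing strictly decreases the number of ``wrong-sign'' crossings in that layer (so the inductive hypothesis applies to $P_\mp$), and that the oriented smoothing $P_0$ either reduces the crossing count or, crucially, keeps the Seifert circle number controlled because within a uniform layer the smoothing respects the layered structure. Tracking the top (resp.\ bottom) $v$-degree through this relation, using that multiplication by $v^{\pm1}$ shifts degrees by one and that $z$ carries no $v$, I would show the extremal coefficient of $P_\D$ at $v^{s(\D)-1+w(\D)}$ (and at $v^{1-s(\D)+w(\D)}$) is, up to sign, a product over layers of nonzero ``twist contributions,'' hence nonzero. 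The nonvanishing of this product is exactly where the ``twisted enough'' quantitative hypothesis in the definition of locally twisted is consumed.

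The hard part will be the bookkeeping at the template junctions: when one glues the braid blocks $\beta_i$ together, the Seifert circles of the total diagram are not simply the disjoint union of those of the blocks, and the writhe and Seifert circle counts interact with the connecting arcs. I expect the main obstacle to be verifying that the extremal $v$-degree of each block's contribution adds up coherently across the template --- i.e.\ that the knitted condition forces the local extremal terms to combine into a global extremal term without cancellation, and that $s(\D)$ computed from the whole template matches the sum of the per-block strand counts minus the obvious identifications. Once that additivity lemma is in place, sharpness for $\D$ follows by combining the per-layer nonvanishing computation with the per-block gluing, and Theorem~\ref{Theorem2} is proved; Theorem~\ref{Theorem1} is then immediate since sharpness of MFW forces optimality, and for a homogeneous diagram optimality together with the homogeneity-controlled genus/Seifert-circle relation pins down the crossing number, giving minimality.
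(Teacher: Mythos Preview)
Your proposed induction does not close, and this is a genuine gap rather than a bookkeeping issue. You say you will induct on ``the number of crossings in the non-dominant positions,'' but in a locally twisted diagram every uniform layer is already sign-homogeneous, so there are no wrong-sign crossings to remove; and if instead you mean to induct on the excess of crossings beyond the mandatory half-twist factors, then applying the skein relation at such a crossing produces diagrams that need not be locally twisted. Concretely: flipping a crossing $\sigma_i^{\pm 1}\mapsto\sigma_i^{\mp 1}$ destroys the homogeneity of that layer, so $P_\mp$ is not in your class; and smoothing $\sigma_i^{\pm 1}$ may kill one of the two half-twist factors the definition demands, so $P_0$ need not be locally twisted either. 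You therefore cannot invoke the induction hypothesis on either child. The phrase ``the smoothing respects the layered structure'' is exactly where the argument would have to do real work, and as stated it is false in general. The template-junction additivity you flag as ``the hard part'' is a second independent gap: the knitted condition is what prevents a Seifert circle from being trapped between two braid boxes sharing the same pair of circles, and this is used in a precise way (see below), not merely as a counting device.

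The paper takes a completely different route, following Diao--Hetyei--Liu. Rather than inducting on crossings, it builds a single \emph{special coherent resolution tree} for $\D$ using \emph{castle structures}: at each phase one picks a base point on an innermost Seifert circle whose associated castle has no traps (Lemma~\ref{TrappedCastle}, which is where the knitted hypothesis is actually consumed), and resolves crossings along the resulting descending/ascending path. One then (i) exhibits an explicit leaf $\U^\ast$ that smooths every positive crossing and hits the top $a$-degree; (ii) shows, via Lemmas~\ref{Ground1}--\ref{LowerBound}, that \emph{every} leaf contributing to that top degree has $\gamma(\U)=s(\D)$, $\omega(\U)=0$, and $t^-(\U)\le t^-(\U^\ast)$; and (iii) deduces that all such leaves contribute with the same sign, so the extremal term survives. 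The ``locally $-$twisted'' hypothesis enters only in step~(i), through Lemma~\ref{Fragment}, to guarantee that the prescribed traversal of $\U^\ast$ can always escape each braid box along the correct Seifert segment. If you want to salvage a skein-induction approach you would need a much larger class of diagrams, stable under both smoothing and crossing change, for which the extremal coefficient is controlled --- and constructing such a class is essentially as hard as the theorem.
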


\begin{Corollary}\label{Corollary1}
All locally twisted link diagrams are optimal.
\end{Corollary}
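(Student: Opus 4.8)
The plan is to obtain Corollary~\ref{Corollary1} as an immediate formal consequence of Theorem~\ref{Theorem2}, using the general principle recalled above that sharpness of the Morton--Franks--Williams inequality for a diagram forces optimality of that diagram. In other words, all of the genuine work has already been packaged into Theorem~\ref{Theorem2}, and what remains is a short deduction; I would present it as a one-paragraph proof rather than a separate development.

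Concretely, I would argue as follows. Let $\D$ be a locally twisted diagram of a link $\L$, and for an arbitrary diagram $\D'$ of $\L$ let $s(\D')$ denote its number of Seifert circles. The Morton--Franks--Williams inequality states that $s(\D') \ge \tfrac12\,\mathrm{breadth}\,P_\L + 1$ for every diagram $\D'$ of $\L$, where $\mathrm{breadth}\,P_\L$ is the breadth of the skein polynomial of $\L$ in the relevant variable. By Theorem~\ref{Theorem2}, this inequality is an equality for our diagram $\D$, so $s(\D) = \tfrac12\,\mathrm{breadth}\,P_\L + 1$. Combining the two gives $s(\D) \le s(\D')$ for every diagram $\D'$ of $\L$, i.e. $\D$ attains the least number of Seifert circles among all diagrams of $\L$ and is therefore optimal. (By \cite[Theorem 3]{Yamada} this common minimal value is the braid index of $\L$, which one may record as an additional remark.)

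I do not expect any real obstacle at this step, since the substance is entirely contained in Theorem~\ref{Theorem2}. The only point requiring a little care is bookkeeping: one must make sure that the quantity shown to be realized by $\D$ in the equality case of Theorem~\ref{Theorem2} is literally the same expression $\tfrac12\,\mathrm{breadth}\,P_\L + 1$ that appears on the right-hand side of the Morton--Franks--Williams bound --- that is, that the choice of skein variable, the normalization, and the factor $\tfrac12$ are fixed consistently throughout. Once the conventions are pinned down (as they are in the statement of Theorem~\ref{Theorem2}), the corollary follows with no further computation.
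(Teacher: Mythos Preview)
Your proposal is correct and matches the paper's approach exactly: the paper records in Section~3.1 that sharpness of the Morton--Franks--Williams inequality for a diagram $\D$ forces $\D$ to be optimal, and then Corollary~\ref{Corollary1} is stated as an immediate consequence of Theorem~\ref{Theorem2} with no further argument. Your one-paragraph deduction is precisely this step spelled out.
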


We deduce several results related to Theorem \ref{Theorem1}. We say that a link is {\it locally twisted} (resp.\ {\it locally twisted homogeneous}) if it admits a locally twisted (resp.\ locally twisted homogeneous) diagram.

A well-known conjecture states that the crossing number of all connected sums of two links equals the sum of the crossing numbers of these links (see \cite[Problem~1.65]{K}, \cite{Lackenby}, and \cite[p.\ 69]{Ad94}). Equivalently, all connected sums of minimal link diagrams are minimal. This conjecture is still open. Since all connected sums of adequate diagrams are adequate, the conjecture holds for adequate links. It is easy to check that all connected sums of locally twisted homogeneous links admit locally twisted homogeneous diagrams. Therefore, Theorem~\ref{Theorem1} implies the following result.

\begin{Corollary}\label{Corollary4}
The conjecture that the crossing number is additive under connected sums holds for all locally twisted homogeneous links in the sense that all connected sums of locally twisted homogeneous diagrams are minimal.
\end{Corollary}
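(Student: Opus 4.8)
The plan is to reduce the statement to Theorem~\ref{Theorem1} by proving that the class of locally twisted homogeneous diagrams is closed under connected sum, and then extracting the additivity of the crossing number as a formal consequence. Recall that, given oriented diagrams $\D_1,\D_2$ together with a choice of an edge in each, the connected sum $\D_1\#\D_2$ is formed by cutting the two chosen edges and rejoining them through a thin untwisted band lying in a region disjoint from the rest of the picture, so that $c(\D_1\#\D_2)=c(\D_1)+c(\D_2)$ by construction. First I would check that homogeneity is preserved: the Seifert graph of $\D_1\#\D_2$ is the one-point union of the Seifert graphs of $\D_1$ and $\D_2$ (the band contributes no new Seifert circle and merely identifies one circle of each summand), so a block decomposition of $\D_1\#\D_2$ is the concatenation of block decompositions of the summands, and the sign condition defining a homogeneous diagram is inherited block by block. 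This is already implicit in Cromwell's treatment~\cite{Cromwell89}.

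The substantive point is that $\D_1\#\D_2$ still admits a knitted template whose uniform layers are twisted enough. The key observation is that connected sum is a purely local modification along a single edge: it alters none of the constituent homogeneous braid diagrams of the templates of $\D_1$ and $\D_2$, nor their subdivision into uniform layers. One therefore obtains a template for $\D_1\#\D_2$ by placing the plane arrangements realising $\D_1$ and $\D_2$ in disjoint regions and joining them through the connecting band, regarded as a trivial strand carrying no crossings. Every uniform layer of the new arrangement is a uniform layer of one of the old ones, hence remains twisted enough by hypothesis; the only thing that genuinely requires verification is that the enlarged arrangement still obeys the combinatorial axioms of a knitted template once the connecting arcs have been routed, and I expect this bookkeeping --- rather than any new estimate --- to be the main, albeit routine, obstacle.

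Granting closure, the conclusion is immediate. By Theorem~\ref{Theorem1} the diagram $\D_1\#\D_2$ is minimal, so $c(\L_1\#\L_2)=c(\D_1\#\D_2)=c(\D_1)+c(\D_2)$; and since $\D_1,\D_2$ are themselves locally twisted homogeneous, Theorem~\ref{Theorem1} also gives $c(\D_i)=c(\L_i)$, whence $c(\L_1\#\L_2)=c(\L_1)+c(\L_2)$. Because nothing in the argument depends on which edges --- equivalently, for multicomponent links, on which components --- are used to form the sum, every connected-sum diagram built from locally twisted homogeneous diagrams is minimal, as claimed.
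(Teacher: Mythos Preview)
Your proposal is correct and follows exactly the paper's approach: the paper simply asserts that ``it is easy to check that all connected sums of locally twisted homogeneous links admit locally twisted homogeneous diagrams'' and then invokes Theorem~\ref{Theorem1}, while you supply the routine verification of that closure property (one-point union of Seifert graphs for homogeneity, juxtaposition of knitted templates for the locally twisted part). Your description of the template for the sum could be sharpened slightly---the band does not introduce a new arc but rather merges one circle of each template into a single circle, so the set of arcs and the braid placement are literally unchanged and the knitted condition is immediate---but this is cosmetic and your argument goes through.
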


Following~\cite{Malyutin}, we say that a minimal diagram $\D$ is~{\it~$1$-regular} if for any diagram~$\D^\prime$, the crossing numbers of links determined by connected sums of $\D$ and $\D^\prime$ are bounded from below by the number of crossings of $\D$. 
If the conjecture on the additivity of crossing number holds,
then any minimal link diagram is~$1$-regular. By using properties of the Kauffman polynomial, one can show that all adequate diagrams are~$1$-regular (see~\cite[Theorem~1]{B20}). Also, all minimal diagrams of torus links are~$1$-regular (see \cite[Theorem~3.8]{AOCN}). In~\cite[Theorem~3.8]{AOCN}, the author gives a sufficient condition on a link diagram to be~$1$-regular. It follows from~\cite[Corollary~4.1]{Cromwell89} that all optimal homogeneous diagrams satisfy this condition. Therefore, Corollary~\ref{Corollary1} implies the following result.

\begin{Corollary}\label{Corollary5}
All locally twisted homogeneous diagrams are $1$-regular.
\end{Corollary}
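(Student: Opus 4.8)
The plan is to obtain Corollary~\ref{Corollary5} as a short chain of implications, with Corollary~\ref{Corollary1} supplying the one genuinely new ingredient. Fix a locally twisted homogeneous diagram $\D$ of a link $\L$. By Corollary~\ref{Corollary1}, $\D$ is optimal, so $\D$ is an optimal homogeneous diagram. By \cite[Corollary~4.1]{Cromwell89}, every optimal homogeneous diagram satisfies the structural hypothesis under which \cite[Theorem~3.8]{AOCN} guarantees $1$-regularity. Applying \cite[Theorem~3.8]{AOCN} to $\D$ then yields that $\D$ is $1$-regular, as desired.

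The step that requires attention is the passage through \cite[Corollary~4.1]{Cromwell89}: one must confirm that the property it furnishes for an optimal homogeneous diagram is exactly the diagrammatic condition appearing in the hypothesis of \cite[Theorem~3.8]{AOCN}, and not merely a close relative of it — in particular that optimality alone (rather than some additional primeness or reducedness assumption) is what is needed. I expect this to go through cleanly, since the sufficient condition of \cite[Theorem~3.8]{AOCN} is phrased in terms of the Seifert-circle count and the behaviour of the skein polynomial, and both are precisely controlled here: optimality equates the number of Seifert circles of $\D$ with the braid index of $\L$, while Theorem~\ref{Theorem2} even records that the Morton--Franks--Williams inequality is sharp for $\D$. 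If any discrepancy surfaces between the two formulations, the remedy is to invoke Theorem~\ref{Theorem2} directly in place of bare optimality and to feed MFW-sharpness into the criterion, which is at least as strong as anything \cite[Theorem~3.8]{AOCN} can demand of its diagrams.

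No genuine obstacle is anticipated: the entire weight of the argument rests on Corollary~\ref{Corollary1} together with the cited results of \cite{Cromwell89} and \cite{AOCN}, so the proof itself is essentially a verification that the hypotheses line up. For context (not as part of the proof) one may note that this exhibits a further family of $1$-regular diagrams alongside the adequate ones treated via the Kauffman polynomial in \cite[Theorem~1]{B20}, in keeping with the expectation, going back to \cite{Malyutin}, that $1$-regularity should accompany minimality whenever the latter is certified by a polynomial invariant.
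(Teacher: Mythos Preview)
Your proposal is correct and follows exactly the same route as the paper: deduce optimality from Corollary~\ref{Corollary1}, then combine \cite[Corollary~4.1]{Cromwell89} with \cite[Theorem~3.8]{AOCN} to obtain $1$-regularity. The paper simply asserts the compatibility of hypotheses you flag for verification, so your caution there is prudent but no discrepancy arises.
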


A braid diagram is said to be {\it minimal} if it has the least number of crossings among all braid diagrams that represent the same braid. A problem due to J. Stallings (see \cite[Problem 1.8]{K}) asks whether words in the standard Artin generators (and their inverses) corresponding to minimal braid diagrams are closed under end extension (replacing a final letter $s$ by $ss$). We propose a natural analog of this conjecture for minimal link diagrams. 

The link diagram transformations of the form
\begin{tikzpicture}[rotate=90, scale=0.9, baseline=1]
\draw[->] (0.3,0.3)--(0,0);
\draw[thick] (0.3,0.3)--(0,0);
\draw[thick] (0.2,0.1)--(0.3,0);
\draw[->] (0.2,0.1)--(0.3,0);
\draw[thick] (0,0.3)--(0.1,0.2);
\end{tikzpicture}
$\mapsto$
\begin{tikzpicture}[rotate=90, scale=0.9, baseline=1]
\draw[thick] (0.3,0.3)--(0,0);
\draw[->] (0.3,0.3)--(0,0);
\draw[thick] (0.2,0.1)--(0.3,0);
\draw[->] (0.2,0.1)--(0.3,0);
\draw[thick] (0,0.3)--(0.1,0.2);
\draw[thick] (0.3,0.6)--(0,0.3);
\draw[thick] (0.2,0.4)--(0.3,0.3);
\draw[thick] (0,0.6)--(0.1,0.5);
\end{tikzpicture}
and
\begin{tikzpicture}[rotate=90, scale=0.9, baseline=1]
\draw[->] (0,0.3)--(0.3,0);
\draw[thick] (0,0.3)--(0.3,0);
\draw[thick] (0.1,0.1)--(0,0);
\draw[->] (0.1,0.1)--(0,0);
\draw[thick] (0.3,0.3)--(0.2,0.2);
\end{tikzpicture}
$\mapsto$
\begin{tikzpicture}[rotate=90, scale=0.9, baseline=1]
\draw[->] (0,0.3)--(0.3,0);
\draw[thick] (0,0.3)--(0.3,0);
\draw[thick] (0.1,0.1)--(0,0);
\draw[->] (0.1,0.1)--(0,0);
\draw[thick] (0.3,0.3)--(0.2,0.2);
\draw[thick] (0.3,0.3)--(0,0.6);
\draw[thick] (0.1,0.4)--(0,0.3);
\draw[thick] (0.3,0.6)--(0.2,0.5);
\end{tikzpicture}
are referred to as the~{\it~doubling of a crossing}.

\begin{conjecture}\label{SMBC1}
Let~$\D$ be an oriented link diagram. Let $\D^\prime$ be a diagram obtained from~$\D$ by the doubling of a crossing. If~$\D$ is minimal, then~$\D^\prime$ is minimal too.
\end{conjecture}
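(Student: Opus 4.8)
\medskip

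The plan is to establish Conjecture~\ref{SMBC1} for the two regimes that the present paper can control, and to say where the general case stalls. The first regime consists of diagrams whose minimality is certified by a polynomial span bound --- adequate, in particular reduced alternating, diagrams --- and here I claim that the doubling move preserves the certificate. The second regime consists of the diagrams handled by Theorem~\ref{Theorem1}, the locally twisted homogeneous ones, which need not be adequate: the standard diagrams of the torus links $T(p,q)$ with $p,q\ge 3$ are examples. For such $\D$ the hypothesis that $\D$ be minimal is automatic, so here the content is to show that doubling preserves membership in the class, after which Theorem~\ref{Theorem1} yields the minimality of $\D'$.

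The adequate case I expect to be routine. Doubling a crossing $c$ into a pair $c_1,c_2$ of crossings of the same sign creates a bigon, and in each of the two extreme Kauffman states the crossings $c_1$ and $c_2$ receive the same type of smoothing; a short case check then shows that, in the all-$A$ state graph, the edge formerly labelled $c$ is either subdivided by one new bivalent vertex --- the state circle bounding the bigon --- or replaced by two parallel edges, and the same dichotomy holds in the all-$B$ state graph. Since either operation sends a loopless graph to a loopless graph, $\D'$ is adequate whenever $\D$ is; adequate diagrams being minimal, Conjecture~\ref{SMBC1} follows for all adequate $\D$, in particular for reduced alternating diagrams.

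The locally twisted homogeneous case is the substantive one. That doubling preserves homogeneity is easy: the doubled crossing joins two Seifert circles of $\D$, so $\D'$ has the same Seifert circles as $\D$, and its Seifert graph is that of $\D$ with one edge added parallel to an existing one and of the same sign; since a pair of parallel edges is $2$-connected, the new edge lies in the block of its partner and that block stays monochromatic. For the remaining condition I would examine a knitted template realizing $\D$: the doubled crossing lies in one homogeneous braid block and in one of its uniform layers, and doubling replaces the word of that layer by the word obtained by substituting $\sigma_i^{2}$ for one occurrence of $\sigma_i$, which inserts a single crossing while leaving the layer's permutation and its set of occurring generators unchanged and leaving every other layer, every other block, and the template combinatorics untouched. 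It then remains to check that the enlarged layer is still twisted enough, and this is the step I expect to be the main obstacle: although adding crossings inside a region that is already wound enough ought to help, the extra crossing may force a re-decomposition of the affected block into uniform layers in which some layer falls just short of the required bound, so the real work is to choose the uniform-layer decomposition of $\D'$ compatibly with that of $\D$. Granting this, $\D'$ is again locally twisted homogeneous, hence minimal by Theorem~\ref{Theorem1}; the same bookkeeping with the homogeneity clause dropped shows that doubling preserves the class of locally twisted diagrams, so by Theorem~\ref{Theorem2} and Corollary~\ref{Corollary1} the diagram $\D'$ is optimal as well.

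Beyond these two regimes the conjecture looks genuinely hard. For an arbitrary minimal $\D$ one would need a crossing-number lower bound that is sharp on $\D$ and stable under doubling, and the standard candidates --- the spans of the Jones and Kauffman polynomials --- are sharp only in the adequate case, while how they change when a single crossing is inserted into a twist region of a non-adequate diagram is not understood. A skein relation does express $P(\L')$ through $P(\L)$ and through $P$ of the oriented smoothing of $\D$ at the doubled crossing, but that controls the braid index rather than the crossing number. So I expect the full conjecture to require essentially new input, and I would present the adequate case and the locally twisted homogeneous case as the concrete positive results toward it.
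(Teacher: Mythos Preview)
The statement is posed in the paper as an open conjecture; there is no proof of it. What the paper does establish is the corollary that follows it --- the conjecture holds for locally twisted homogeneous diagrams --- together with a one-line remark that it holds for adequate diagrams. Your proposal has exactly this scope, and your assessment that the general case needs genuinely new input is accurate.

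Your adequate-case argument is correct. For the locally twisted homogeneous case, both you and the paper argue that doubling preserves membership in the class and then invoke Theorem~\ref{Theorem1}; the paper dismisses the preservation step as ``easy to check'', while you flag it as the main obstacle. Your caution is warranted, though the issue is not quite where you locate it: the decomposition of a homogeneous braid word into uniform layers is forced by the sign vector $r$, not chosen, and doubling a crossing leaves $r$ --- hence the layers --- unchanged; one layer simply gains a letter. The genuine question is whether that enlarged layer still factors as $v_1v_2v_3$ with $v_1,v_3$ representing the half-twist. When the doubled letter lands in $v_2$ this is trivial; when it lands in $v_1$ or $v_3$ it can fail. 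Take $n=3$ and the layer $(\sigma_1\sigma_2\sigma_1)(\sigma_1\sigma_2\sigma_1)$, which has $v_2$ empty: doubling the first $\sigma_2$ yields $\sigma_1\sigma_2^2\sigma_1^2\sigma_2\sigma_1$, and since the only positive length-$3$ words for $\Delta_{1,3}$ are $\sigma_1\sigma_2\sigma_1$ and $\sigma_2\sigma_1\sigma_2$, one checks that no cyclic rotation of this word has both its length-$3$ prefix and its length-$3$ suffix representing $\Delta_{1,3}$, and no alternative knitted template realizes its closure. So the class-preservation claim does not follow by the direct word-level argument. The doubled diagram here is still minimal --- it is the closure of a positive $3$-braid, for which the Morton--Franks--Williams inequality is sharp --- so the corollary may well survive, but neither your sketch nor the paper's one-liner actually closes the gap.
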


It follows by definition that all diagrams obtained from adequate ones by the doubling of a crossing are adequate. Therefore, Conjecture~\ref{SMBC1} holds for them. It is easy to check that all diagrams obtained from locally twisted homogeneous ones by the doubling of a crossing are locally twisted homogeneous. Therefore, Theorem~\ref{Theorem1} implies the following result.

\begin{Corollary}
{\normalfont Conjecture~\ref{SMBC1}} holds for all locally twisted homogeneous link diagrams in the sense that all link diagrams obtained from locally twisted homogeneous ones by the doubling of a crossing are minimal.
\end{Corollary}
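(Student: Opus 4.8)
The proof of this corollary is essentially a two-line deduction: it combines Theorem~\ref{Theorem1} with the observation that the class of locally twisted homogeneous diagrams is closed under the doubling of a crossing. The plan is therefore to verify this closure property, since the conclusion then follows immediately from Theorem~\ref{Theorem1}: once $\D'$ is known to be locally twisted homogeneous, Theorem~\ref{Theorem1} gives that $\D'$ is minimal, which is precisely the assertion.

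First I would recall that the doubling of a crossing replaces a single crossing by two parallel crossings of the same sign, inserted in a small disk that meets the diagram in two arcs; locally this is the bigon move depicted in the two pictures above. To see that homogeneity is preserved, I would track the effect on the Seifert graph (the graph whose vertices are Seifert circles and whose edges are crossings): the doubling move either creates a new Seifert circle together with two edges of the same sign joining it to the old endpoints, or it doubles an existing edge. In both cases no new block of the block decomposition appears except possibly a trivial one consisting of a doubled edge, and the sign within each block is unchanged; hence the new diagram is still homogeneous. Second, and this is the point requiring a little care, I would check that a knitted template exhibiting $\D$ as locally twisted can be modified to exhibit $\D'$ as locally twisted: the crossing being doubled lies in some uniform layer of one of the homogeneous braid diagrams placed on the plane by the template, and doubling it replaces that layer by one with one more crossing of the same sign in the same column, which only increases the ``twist'' of the layer. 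Since the locally twisted condition is a lower bound on the amount of twisting per uniform layer, it is preserved — indeed strengthened — by the doubling, so the same template (with the enlarged layer) witnesses that $\D'$ is locally twisted homogeneous.

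The main obstacle, such as it is, is bookkeeping rather than conceptual: one must be careful that the doubled crossing does not sit at a ``boundary'' between two uniform layers or at an interface between two braid blocks of the knitted template, where the naive modification of the template might fail. I would handle this by first observing that up to an isotopy of the diagram (which changes neither minimality nor the locally twisted homogeneous property) one may assume the crossing to be doubled lies strictly in the interior of a uniform layer of one braid diagram of the template; the doubling move is local, so such an isotopy is always available. With that normalization in place, the template modification is the routine one described above, and the corollary follows from Theorem~\ref{Theorem1}.
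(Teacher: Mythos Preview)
Your proposal is correct and follows exactly the route the paper takes: the paper simply asserts that ``it is easy to check that all diagrams obtained from locally twisted homogeneous ones by the doubling of a crossing are locally twisted homogeneous'' and then invokes Theorem~\ref{Theorem1}. Your write-up supplies the easy check the paper omits; note only that for the oriented doubling depicted here the Seifert graph effect is always the second of your two cases (an existing edge is duplicated, no new Seifert circle appears), and a crossing always lies strictly inside a single uniform layer of a single braid box, so the boundary worries you raise do not in fact arise.
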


\subsection{Related results}

We list several results related to Theorem \ref{Theorem2}.

If a link diagram $\D$ has two Seifert circles that share precisely one crossing, then $\D$ is not optimal. Namely, one can apply a transformation that reduces the number of Seifert circles. This transformation is referred to as the {\it Murasugi--Przytycki move} (see~\cite[Definition~2.1]{Index}). In \cite[Theorem~1.1]{BIRAL}, the authors show that
if an alternating diagram~$\D$ has no pair of Seifert circles that share precisely one crossing, 
then the Morton--Franks--Williams inequality is sharp for $\D$. Thus, they characterize those alternating diagrams 
that are optimal. Many, but not all, optimal alternating diagrams admit flypes (see below) that transform them into locally twisted ones.

All prime knots up to $10$ crossings except $9_{42}$, $9_{49}$, $10_{132}$, $10_{150}$, and $10_{156}$ admit diagrams for which the Morton--Franks--Williams inequality is sharp 
(see \cite[p.\ 174]{Ad94}).
The~same is true for the closures of all positive braids with
at most $3$ strands (see~\cite[Proposition~3.1]{Nakamura}), fibered alternating links (see~\cite[Theorem~A]{MurasugiA}), and rational links (see~\cite[Theorem~B]{MurasugiA}).

Another class of link diagrams for which the Morton--Franks--Williams inequality is sharp is that of the closures of all positive braid diagrams with the full twist
(see \cite[Corollary~2.4]{FW}, \cite[Theorem~10.5.1]{CromwellBook}, \cite[Corollary~4.5]{GM}, \cite[Theorem~1.3]{Kalman}, and~\cite[Corollary~1.4]{Feller}). 
For example, it follows by definition that standard diagrams of all torus links lie within this class. 
In \cite{Nak20}, the author generalizes the braid closure construction and introduces {\it knitted diagrams}, which are similar to the knitted templates defined below.
It follows from \cite[Theorem~2.4]{Nak20} that 
the Morton--Franks--Williams inequality is sharp for all positive knitted diagrams with the full twists. 
It turns out that a positive link diagram is knitted if and only if it is locally twisted.

We recall the definition of certain braid diagrams introduced by J. Gonz\`{a}lez-Meneses and P. M. G. Manch\`{o}n in \cite{GM}.
We refer to these braid diagrams as {\it GMM} ones.
By~definition, 
a braid diagram is a GMM diagram if and only if the corresponding word in the alphabet of the standard Artin generators $\{\sigma_1,\sigma_2, \ldots \}$ is obtained from the empty word by a finite sequence of transformations of the following types:
\begin{enumerate}
\item for some $i \in \{1, 2, \ldots, n-1\}$, inserting $\sigma_i\sigma_i$;
\item for some $i \in \{1, 2, \ldots, n-1\}$, doubling a letter $\sigma_i$;
\item applying either the braid relation or the far commutativity one.
\end{enumerate}
In~\cite[Corollary~4.3]{GM}, the authors prove that given a positive braid diagram~$\V$, the Morton--Franks--Williams inequality is sharp for
the closure of $\V$ if and only if $\V$ is GMM. In particular, the closures of all GMM braid diagrams are both optimal and minimal (see~Proposition~\ref{SomeHomogeneousAreMinimal}).

In a forthcoming paper, we will extend the class of the closures of GMM braid diagrams to other link templates, and we will study 
the sharpness of the Morton--Franks--Williams inequality for the corresponding link diagrams.

\subsection{Discussion}

At the end of the~19th century, P.\ G.~Tait made three conjectures, called the {\it Tait~conjectures}, concerning alternating knots and links (see an expository note~\cite{Menasco19}):
\begin{enumerate}
\item any reduced alternating diagram is minimal;
\item any two reduced alternating diagrams that represent the same link have the same writhe;
\item any two reduced alternating diagrams that represent the same link are related through a finite sequence of diagram transformations referred to as {\it flypes}. 
\end{enumerate}
All of the Tait conjectures hold. 
In \cite[Main~Theorem]{FlypingConjecture}, W. Menasco and M. B. Thistlethwaite prove the third Tait conjecture, called the {\it Tait flyping conjecture}. 
The second Tait conjecture follows from the third one. 
The first Tait conjecture
was proved
by~L.~Kauffman, M.~B.~Thistlethwaite, and K.~Murasugi independently by using the Jones polynomial (see \cite[Theorem 2.10]{Kauffman}, \cite[Theorem~2]{Thistlethwaite87}, and \cite[Theorem A]{M}).
Namely, they showed that the number of crossings of any diagram of a link $\L$ is bounded from below by 
the breadth of
the Jones polynomial
of $\L$. Furthermore, for any reduced alternating link diagram, this inequality is sharp.

In \cite[Corollary 3.4]{Thistlethwaite88}, M.~B.~Thistlethwaite uses a generalization of the Jones polynomial referred to as the Kauffman polynomial and obtains similar minimality results 
for adequate link diagrams.
All reduced alternating diagrams are adequate. The simplest adequate non-alternating prime knots have~$10$ crossings, and there are $3$ such:~$10_{152}$, $10_{153}$, and $10_{154}$.

It is noteworthy that if a prime diagram is not alternating, the inequality concerning the breadth of the Jones polynomial is strict. This implies that any minimal diagram of each prime alternating link is alternating. 
In \cite[Theorem 10]{LT}, W. B. R. Lickorish and M. B. Thistlethwaite use this fact to show that some specific link diagrams referred to as reduced Montesinos ones are minimal, and thus they calculate the crossing number of all Montesinos links.

Traditionally, one indexes tables of (non-oriented) prime knots by the crossing number and one counts mirror images as a single knot type. 
It follows from the definitions that if a link diagram is locally twisted homogeneous, then its mirror image too.
A similar result holds for the reverse of a link diagram. 

Table \ref{Table} shows whether a prime knot up to $9$ crossings is adequate, Montesinos, or locally twisted homogeneous.
The majority of prime knots up to $9$ crossings are adequate. In~particular, there are only $3$ non-adequate knots $8_{19}$, $8_{20}$, and $8_{21}$ among the $36$ prime knots up to $8$ crossings, and there are only $8$ non-adequate knots $9_{42}, 9_{43}, \ldots, 9_{49}$ among the~$49$ prime knots with $9$ crossings. Moreover, all prime knots up to $9$ crossings except~$8_{20}$, $8_{21}$, $9_{42}$, $9_{44}$, $9_{45}$, $9_{46},$ and $9_{48}$ are homogeneous. For tables of Montesinos knots, we follow~\cite{Dun01}. We conclude that the positive knot $9_{49}$ is the only prime knot up to~$9$ crossings that does not admit a visually minimal diagram yet.

\begin{table}[H]
\centering
\begin{tabular}{|c|c|c||l|}
\cline{1-4}
A & M & LTH & \\ \cline{1-4}
$+$ & $+$ & $+$ & $3_1$, $4_1$, $5_1$, $6_2$, $6_3$, $7_1$, $7_4$, $7_6-7_7$, $8_2$, $8_5$, $8_7$, $8_9-8_{10}$, \\
    &     &     & $8_{12}$, $9_{1}$, $9_{11}$, $9_{17}$, $9_{20}$, $9_{22}$, $9_{24}$, $9_{26}-9_{28}$, $9_{30}-9_{31}$, $9_{36}$ \\ \cline{1-4}
$+$ & $+$ & $-$ & $5_2$, $6_1$, $7_2-7_3$, $7_5$, $8_1$, $8_3-8_4$, $8_6$, $8_8$, $8_{11}$, $8_{13}-8_{15}$,  \\
    &     &     & $9_{2}-9_{10}$, $9_{12}-9_{16}$, $9_{18}-9_{19}$, $9_{21}$, $9_{23}$, $9_{25}$, $9_{35}$, $9_{37}$ \\ \cline{1-4}
$+$ & $-$ & $+$ & $8_{16}-8_{18}$, $9_{29}$, $9_{32}-9_{34}$, $9_{40}$ \\ \cline{1-4}
$-$ & $+$ & $+$ & $8_{19}$, $9_{43}$ \\ \cline{1-4}
$-$ & $+$ & $-$ & $8_{20}-8_{21}$, $9_{42}$, $9_{44}-9_{46}$, $9_{48}$ \\ \cline{1-4}
$+$ & $-$ & $-$ & $9_{38}-9_{39}$, $9_{41}$ \\ \cline{1-4}
$-$ & $-$ & $+$ & $9_{47}$ \\ \cline{1-4}
$-$ & $-$ & $-$ & $9_{49}$ \\ \cline{1-4}
\end{tabular}
\PrintSkips{table}
\caption{A table of adequate (A), Montesinos (M), and locally twisted homogeneous (LTH) prime knots up to $9$ crossings.}\label{Table}
\end{table}

The closure of the third braid diagram shown in Figure~\ref{SolidBraidDiagrams} is locally twisted homogeneous and represents~$9_{47}$. 
We claim that the only known visual approach 
to establishing that 
at least one link diagram with $9$ crossings that represents $9_{47}$ is minimal
is to apply~Theorem~\ref{Theorem1}.
Equivalently (see \ref{Table}), we claim that~$9_{47}$ does not admit a link diagram that is the closure of a GMM braid diagram.
To prove this, we refer to the following facts.
First, any minimal diagram of a link that admits optimal positive one is optimal (see Proposition~\ref{SomeHomogeneousAreMinimal}). Second, any two optimal diagrams of the same link have the same writhe (see~\cite[Theorem~10]{DP13}). By combining these facts, we see that any minimal diagram of a link that admits optimal positive one is positive. However, the minimal diagram of~$9_{47}$ mentioned above is not positive. 

Recall that all adequate prime knots up to $9$ crossings are alternating.
All locally twisted alternating prime knots up to $9$ crossings are the closures of alternating braids. Besides, all locally twisted positive prime knots up to $9$ crossings are torus ones, and there are $5$ such:~$3_1$, $5_1$, $7_1$, $8_{19}$, and $9_1$.
The closure of the second braid diagram shown in~Figure~\ref{SolidBraidDiagrams} is locally twisted homogeneous and represents $9_{43}$. A simple enumeration shows that~$9_{43}$ and~$9_{47}$ are the only two non-torus non-alternating locally twisted homogeneous prime knots up to $9$ crossings. 

In contrast to alternating diagrams, there are non-minimal reduced homogeneous ones (see~\cite[Figure~12]{Cromwell89}). Also, there exist positive links that admit both non-minimal positive diagrams and non-homogeneous minimal ones 
(see~\cite[Theorem~1]{Stoimenow}). 
Thus, the problem of visual determining the crossing number of a link given by a homogeneous diagram remains unsolved.
Besides, no analog of the Tait flyping conjecture has been found for homogeneous links so far. 

A natural question is whether one can read off topological 
properties of the link given by a diagram~$\D$ 
(such as knottedness, splitness, primeness, and hyperbolicity) 
from~$\D$. 
In some sense, homogeneous links are visually knotted. Namely, a homogeneous link~$\L$ is trivial if and only if for some (and hence any) homogeneous diagram~$\D$ of~$\L$,
the Seifert graph of $\D$
has no cycles (see \cite[Theorem~3]{Cromwell89} and \cite[Corollary 7.6.3]{CromwellBook}). 
A link $\L$ is said to be {\it split} if~$\L$ admits a disconnected diagram. 
Homogeneous links are visually split in the sense that a homogeneous link~$\L$ is split if and only if
some (and hence any) homogeneous diagram
of~$\L$
is disconnected (see \cite[Corollary~3.1]{Cromwell89} and~\cite[Corollary~7.6.4]{CromwellBook}). 
In~\cite{Cromwell93}, the author conjectures that any link given by a prime homogeneous diagram is prime. 
Equivalently, the conjecture states that 
homogeneous links are visually prime in the sense that a homogeneous link $\L$ is prime if and only if some (and hence any) homogeneous diagram of~$\L$ is prime. This conjecture holds for both alternating (see~\cite[Theorem~1]{Menasco84} and \cite[Theorem~4.4]{Lickorish}) and positive links (see~\cite[Theorem~1.4]{Ozawa91} and~\cite[Theorem~1.2]{Cromwell93}). 
Finally, in \cite[Corollary~2]{Menasco84}, the author proves that if a non-torus alternating link~$\L$ is both prime and non-split, then~$\L$ is hyperbolic.  
However, the problem of visual determining whether a link given by a homogeneous diagram is hyperbolic remains unsolved.

The paper is organized as follows. 
In Section~2, we give a detailed definition of both locally twisted diagrams and locally twisted homogeneous ones.
In Section~3, we state the Morton--Franks--Williams inequality explicitly,
reduce Theorem \ref{Theorem1} to Corollary \ref{Corollary1}, 
and 
give a definition of 
resolution trees for the skein polynomial. 
The remaining part of the paper aims to prove Theorem~\ref{Theorem2}. The basis of our proof is the techniques developed in~\cite{BIRAL}.
In~Section~4, we introduce resolution trees that we call coherent. 
In Section~5, we introduce castle structures for link diagrams to describe specific resolution trees that we call special coherent. 
In Section~6, we apply these resolution trees to prove~Theorem~\ref{Theorem2}.

\section{Locally twisted homogeneous diagrams}

This section aims to define locally twisted homogeneous link diagrams. 
First, we introduce locally twisted braid words. Then, we define link templates, which generalize the Alexander closure of braids construction, and we give a complete definition of locally twisted link diagrams. Finally, we define homogeneous link diagrams introduced in \cite{Cromwell89}.

\subsection{Locally twisted braid words}

Given a set of symbols~$S$, a finite sequence of elements of $S$ is called a {\it word in an alphabet $S$}. We write words without commas and we denote by $S^\ast$ the set of all words in the alphabet $S$. In this paper, we consider alphabets of the form $S \subseteq \{\sigma_1,\sigma_1^{-1}, \sigma_2, \sigma_2^{-1}, \ldots\}$.

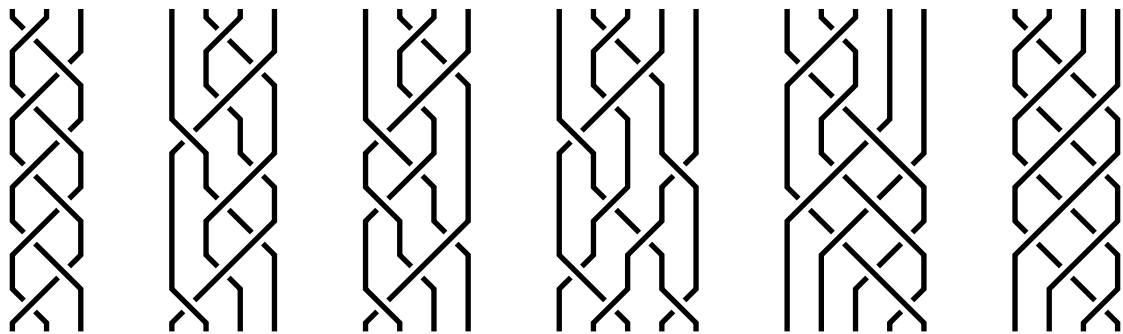
\begin{figure}[H]
\centering
\begin{tikzpicture}[scale=0.45, every node/.style={scale=0.45}]
\pic[
  line width=2pt,
  braid/control factor=0,
  braid/nudge factor=0,
  braid/gap=0.11,
  braid/number of strands = 3,
  name prefix=braid,
] at (0,0) {braid={
s_1^{-1}s_2s_1^{-1}s_2s_1^{-1}s_2s_1^{-1}s_2s_1^{-1}
}};
\end{tikzpicture}\hspace{1cm}
\begin{tikzpicture}[scale=0.45, every node/.style={scale=0.45}]
\pic[
  line width=2pt,
  braid/control factor=0,
  braid/nudge factor=0,
  braid/gap=0.11,
  braid/number of strands = 4,
  name prefix=braid,
] at (0,0) {braid={
s_2^{-1}s_3^{-1}s_2^{-1}s_1s_3^{-1}s_2^{-1}s_3^{-1}s_2^{-1}s_1
}};
\end{tikzpicture}\hspace{1cm}
\begin{tikzpicture}[scale=0.45, every node/.style={scale=0.45}]
\pic[
  line width=2pt,
  braid/control factor=0,
  braid/nudge factor=0,
  braid/gap=0.11,
  braid/number of strands = 4,
  name prefix=braid,
] at (0,0) {braid={
s_2^{-1}s_3^{-1}s_2^{-1}s_1s_2^{-1}s_1s_3^{-1}s_2^{-1}s_1
}};
\end{tikzpicture}\hspace{1cm}
\begin{tikzpicture}[scale=0.45, every node/.style={scale=0.45}]
\pic[
  line width=2pt,
  braid/control factor=0,
  braid/nudge factor=0,
  braid/gap=0.11,
  braid/number of strands = 5,
  name prefix=braid,
] at (0,0) {braid={
s_2^{-1}s_3^{-1}s_2^{-1}s_1s_4s_2^{-1}s_3^{-1}s_1s_2^{-1}-s_4
}};
\end{tikzpicture}\hspace{1cm}
\begin{tikzpicture}[scale=0.45, every node/.style={scale=0.45}]
\pic[
  line width=2pt,
  braid/control factor=0,
  braid/nudge factor=0,
  braid/gap=0.11,
  braid/number of strands = 5,
  name prefix=braid,
] at (0,0) {braid={
s_2^{-1}s_1^{-1}s_2^{-1}s_3s_2^{-1}-s_4s_3-s_1^{-1}s_2^{-1}-s_4s_3s_4
}};
\end{tikzpicture}\hspace{1cm}
\begin{tikzpicture}[scale=0.45, every node/.style={scale=0.45}]
\pic[
  line width=2pt,
  braid/control factor=0,
  braid/nudge factor=0,
  braid/gap=0.11,
  braid/number of strands = 4,
  name prefix=braid,
] at (0,0) {braid={
s_1^{-1}s_2^{-1}s_1^{-1}-s_3^{-1}s_2^{-1}s_1^{-1}-s_3^{-1}s_2^{-1}s_1^{-1}-s_3^{-1}s_2^{-1}s_3^{-1}
}};
\end{tikzpicture}
\caption{
Examples of braid diagrams.
}
\label{SolidBraidDiagrams}
\end{figure}

Given~$n \geq 2,$ an element of~$\{\sigma_1, \sigma_1^{-1}, \ldots, \sigma_{n-1}, \sigma_{n-1}^{-1}\}^\ast$ 
is called a {\it braid word}.
We visualize braid words by their $n$ strand diagrams, which we draw vertically from top to bottom. For example, see the second picture in~Figure~\ref{SolidBraidDiagrams} for the braid diagram with $4$ strands corresponding to~$\sigma_2\sigma_3\sigma_2\sigma_1^{-1}\sigma_3\sigma_2\sigma_3\sigma_2\sigma_1^{-1}$. 

We denote by $\mathcal{B}_n$ the braid group with $n$ strands. Recall that $\B_n$ admits the following presentation with the standard Artin generators:
\begin{align*}
\B_n \simeq \langle \sigma_1, \ldots, \sigma_{n-1} \mid \sigma_k\sigma_{k+1}\sigma_k=\sigma_{k+1}\sigma_k\sigma_{k+1}, \ 1 \leq k \leq n-1; \ \sigma_i\sigma_j=\sigma_j\sigma_i, \ |i-j|\geq 2 \rangle.
\end{align*}
The relation~$\sigma_k\sigma_{k+1}\sigma_k=\sigma_{k+1}\sigma_k\sigma_{k+1}$ is called the {\it braid relation}, and the relation $\sigma_i\sigma_j=\sigma_j\sigma_i$ is called the {\it far commutativity} relation. An element of $\B_n$ is called a {\it braid} with $n$ strands. 

Given $i,j \in \{1,2,\ldots,n\}$ such that $i<j$, let
\begin{align}\label{DeltaWord1}
\delta_{i,j} := (\sigma_i \sigma_{i+1} \ldots \sigma_{j-1} \sigma_{j})(\sigma_i \sigma_{i+1} \ldots \sigma_{j-1}) \ldots (\sigma_i \sigma_{i+1}) \sigma_i,
\end{align}
and let $\Delta_{i,j} \in \B_n$ be the braid corresponding to $\delta_{i,j}$. The braid $\Delta_{1,n}$ is referred to as the {\it half twist}, the {\it fundamental braid}, and the {\it Garside element} in~$\B_n$. The braid~$\Delta_{1,n}^2 \in \B_n$ is referred to as the {\it full twist}.

In \cite{Stallings}, J. Stallings introduces the concept of homogeneous braid words. Namely, given $n \geq 2$ and given~$r = (r_1, \ldots, r_{n-1}) \in \{1,-1\}^{n-1}$, a braid word $w \in \{\sigma_1^{\pm 1}, \ldots, \sigma_{n-1}^{\pm 1}\}^\ast$ is said to be {\it $r$-homogeneous} if~$w$ contains none of the letters $\sigma_1^{-r_1}, \sigma_2^{-r_2}, \ldots, \sigma_{n-1}^{-r_{n-1}}$.

Let $r \in \{1,-1\}^{n-1}$. Let $i_1,i_2,\ldots,i_m \in \{1,2,\ldots,n\}$ be such that $$1~=~i_1~<~i_2~<~\ldots~<~i_m~=~n$$ and for all $k \in \{1,2,\ldots,m-1\}$, one has $r_{i_k}=r_{i_k+1}=\ldots=r_{i_{k+1}-1}$ and $r_{i_{k+1}-1} \neq r_{i_{k+1}}$. 

Given an $r$-homogeneous braid word $w$ and $k \in \{1,2,\ldots,m-1\}$, we say that the braid word obtained from $w$ by deleting all letters except~$\sigma_{i_k}^{\pm 1}, \sigma_{i_k+1}^{\pm 1}, \ldots, \sigma_{i_{k+1}-1}^{\pm 1}$ is the {\it uniform layer of index $k$ of $w$}.

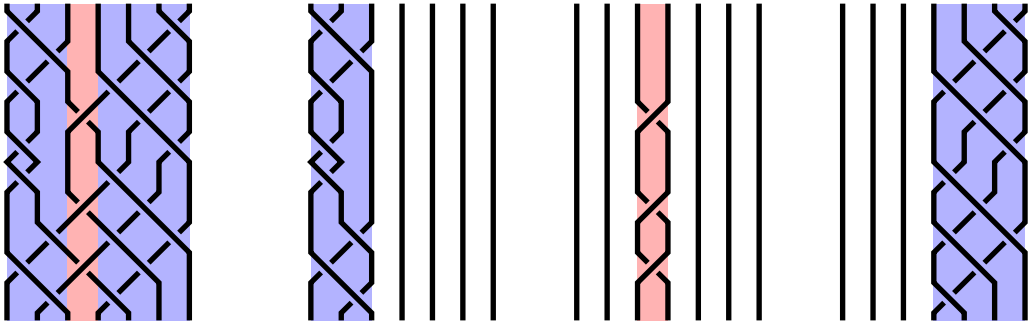
\begin{figure}[H]
\centering
\begin{tikzpicture}[scale=0.4, every node/.style={scale=0.4}]
\fill[blue!30!white] (0,0) rectangle ++(2,-10.5);
\fill[red!30!white] (2,0) rectangle ++(1,-10.5);
\fill[blue!30!white] (3,0) rectangle ++(3,-10.5);
\pic[
  line width=2pt,
  braid/control factor=0,
  braid/nudge factor=0,
  braid/gap=0.11,
  braid/number of strands = 7,
  name prefix=braid,
] at (0,0) {braid={
s_1-s_6
s_2-s_5
s_1-s_4-s_6
s_3^{-1}-s_5
s_1-s_6
s_1-s_4
s_3^{-1}-s_5
s_2-s_4-s_6
s_1-s_3^{-1}-s_5
s_2-s_4
}};
\end{tikzpicture}\hspace{1.4cm}
\begin{tikzpicture}[scale=0.4, every node/.style={scale=0.4}]
\fill[blue!30!white] (0,0) rectangle ++(2,-10.5);
\pic[
  line width=2pt,
  braid/control factor=0,
  braid/nudge factor=0,
  braid/gap=0.11,
  braid/number of strands = 7,
  name prefix=braid,
] at (0,0) {braid={
s_1s_2s_11s_1s_11s_2s_1s_2
}};
\end{tikzpicture}\hspace{0.9cm}
\begin{tikzpicture}[scale=0.4, every node/.style={scale=0.4}]
\fill[red!30!white] (2,0) rectangle ++(1,-10.5);
\pic[
  line width=2pt,
  braid/control factor=0,
  braid/nudge factor=0,
  braid/gap=0.11,
  braid/number of strands = 7,
  name prefix=braid,
] at (0,0) {braid={
111s_3^{-1}11s_3^{-1}1s_3^{-1}1
}};
\end{tikzpicture}\hspace{0.9cm}
\begin{tikzpicture}[scale=0.4, every node/.style={scale=0.4}]
\fill[blue!30!white] (3,0) rectangle ++(3,-10.5);
\pic[
  line width=2pt,
  braid/control factor=0,
  braid/nudge factor=0,
  braid/gap=0.11,
  braid/number of strands = 7,
  name prefix=braid,
] at (0,0) {braid={
s_6
s_5
s_4-s_6
s_5
s_6
s_4
s_5
s_4-s_6
s_5
s_4
}};
\end{tikzpicture}
\caption{
A braid diagram and the corresponding uniform layers.
}
\label{LocallyTwistedBraidWord}
\end{figure}

Let $\mathcal{H}_r^+$ (resp.\ $\mathcal{H}_r^{-}$) be the class of all $r$-homogeneous braid words 
$w$
such that for each~$k~\in~\{1,2,\ldots,m-1\}$ with $r_{i_k}~=~1$ (resp.\ $r_{i_k}~=~-1$), 
the index~$k$ uniform layer of~$w$ 
admits a decomposition of the form $v_1v_2v_3$ such that both $v_1$ and $v_3$ represent $\Delta_{i_k,i_{k+1}}$ (resp.\ $\Delta_{i_k,i_{k+1}}^{-1}$). 
The uniform layers of $w \in \mathcal{H}_r^{+} \cap \mathcal{H}_r^{-}$ are called {\it braid words with the full twist}.

\begin{definition}
Given $\# \in \{+,-\}$ and $n\geq 2$, we say that a braid word $w \in \{\sigma_1^{\pm 1}, \ldots, \sigma_{n-1}^{\pm 1}\}^\ast$ is {\it locally~$\#$twisted} (resp.\ {\it locally twisted}), if there exist $r \in \{1,-1\}^{n-1}$ such that $w \in \mathcal{H}_r^{\#}$ (resp.\ ~$w \in \mathcal{H}_r^{+} \cap \mathcal{H}_r^{-}$).
\end{definition}

For example, the braid word~$w$ corresponding to the braid diagram shown in Figure~\ref{LocallyTwistedBraidWord} (on the left) is locally twisted with~$r = (-1,-1,1,-1,-1,-1)$. Besides, all braid diagrams shown in Figure~\ref{SolidBraidDiagrams} correspond to locally twisted braid words.

\subsection{Link templates}

Any braid diagram gives rise to a link diagram via the {\it Alexander closure} (see Figure~\ref{Closure}). To define locally twisted link diagrams, we generalize this construction by modifying that of \cite{Yamada}.

Let $\C = \{C_1, C_2, \ldots, C_s\}$ 
and
$\A = \{\a_1,\a_2,\ldots,\a_t\}$
be a set of disjoint oriented circles 
and 
a set of disjoint oriented simple closed arcs, respectively, on the plane. 
The ordered pair 
$(\C,\A)$
is called a {\it template}
if $\partial \left(\bigcup \A\right) \subseteq \left(\bigcup \C\right)$ and
for all 
$x \in \left(\bigcup \C \right)\cap \left(\bigcup \A\right)$, 
there is a neighborhood of $x$ diffeomorphic to one of the pictures shown in Figure \ref{LocalIntersection} (on the left).

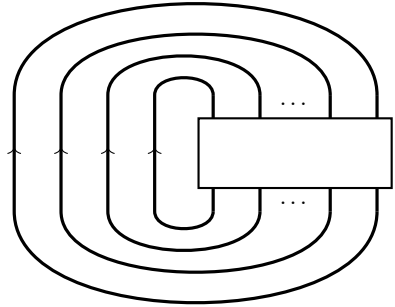
\begin{figure}
\centering
\begin{tikzpicture}[scale=0.77, every node/.style={scale=0.82}]
\draw[very thick] (-1.4,0) -- (-1.4,2);
\draw[very thick] (-.6,0) -- (-.6,2);
\draw[very thick] (.6,0) -- (.6,2);
\draw[very thick] (1.4,0) -- (1.4,2);
\draw[very thick ] (-1.4,2) .. controls ++(0,.4) and ++(0,.4) .. (-2.4,2)
    -- (-2.4,0) .. controls ++(0,-.4) and ++(0,-.4) .. (-1.4,0);
\draw[very thick ] (-.6,2) .. controls ++(0,.9) and ++(0,.9) .. (-3.2,2)
               -- (-3.2,0) .. controls ++(0,-.9) and ++(0,-.9) .. (-.6,0);
\draw[very thick ] (.6,2) .. controls ++(0,1.4) and ++(0,1.4) .. (-4,2)
                -- (-4,0) .. controls ++(0,-1.4) and ++(0,-1.4) .. (.6,0);
\draw[very thick ] (1.4,2) .. controls ++(0,2.1) and ++(0,2.1) .. (-4.8,2)
                -- (-4.8,0) .. controls ++(0,-2.1) and ++(0,-2.1) ..
(1.4,0);
\draw[thick, fill=white, draw=black] (-1.65,.4) -- (-1.65,1.6) --
(1.65,1.6) -- (1.65,.4) -- cycle;
\node at (0,.15) {$
\dots$};
\node at (0,1.85) {$
\dots$};
\draw [->] (-3.2,1) -- (-3.2,1.1); 
\draw [->] (-4,1) -- (-4,1.1); 
\draw [->] (-4.8,1) -- (-4.8,1.1); 
\draw [->] (-2.4,1) -- (-2.4,1.1); 
\end{tikzpicture}
\caption{The Alexander closure of a braid diagram.}
\label{Closure}
\end{figure}

\begin{figure}
\centering
\includegraphics[width = 14cm]{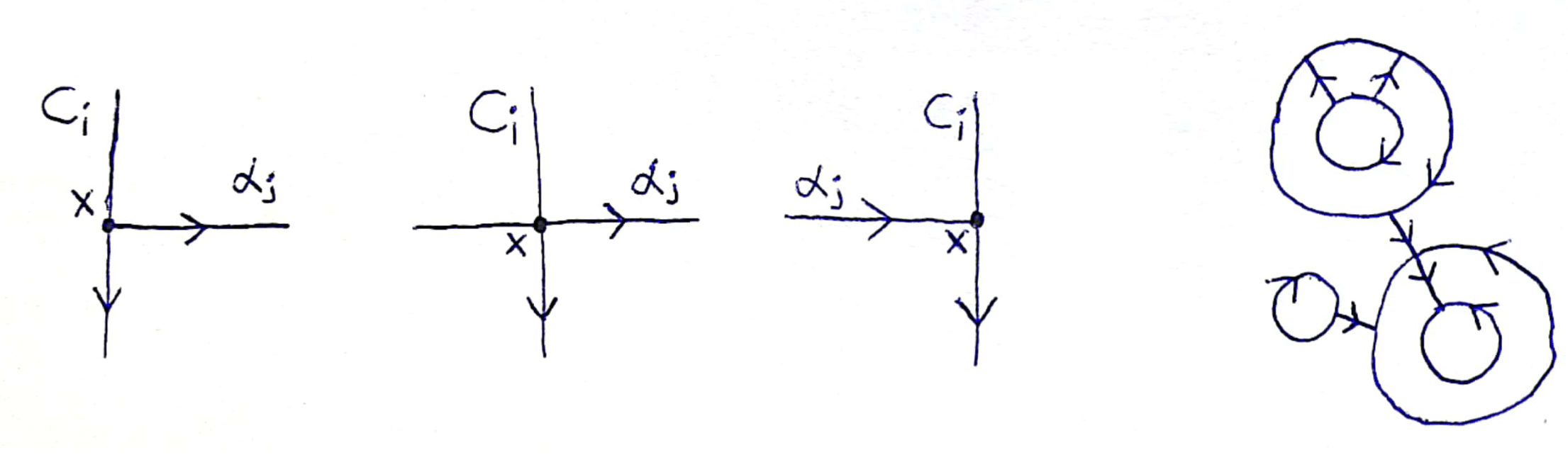}
\caption{Any point 
$x \in \left(\bigcup \C \right)\cap \left(\bigcup \A\right)$
has a neighborhood that looks like one of these (on the left). An example of a template (on the right).}
\label{LocalIntersection}
\end{figure}

\begin{figure}
\centering
\includegraphics[width = 14cm]{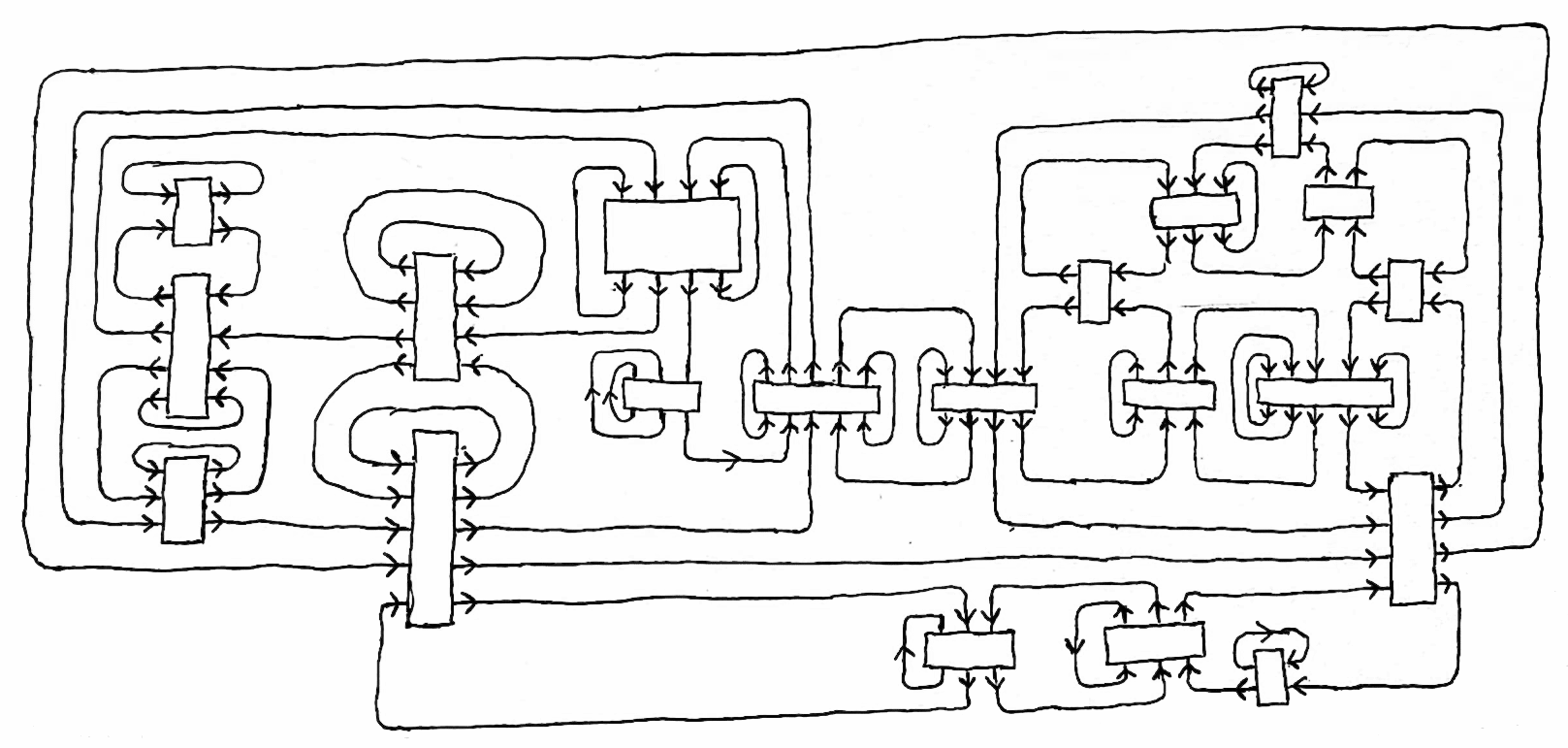}
\caption{Construction of link diagrams determined by a template.}
\label{WSSC}
\end{figure}

Let $(\C,\A)$ be a template. Given $\a \in \A$, denote by $\left\lVert \a \right\rVert$ the number of circles $C \in \C$ intersecting~$\a$.
Note that for all $\a \in \A$, one has $\left\lVert \a \right\rVert \geq 2$, and the arc $\a$ intersects each circle~$C \in \C$ no more than once. A function 
$$\p \colon \A \longrightarrow \{\sigma_i^{\pm 1} \mid i \in \{1,2,3, \ldots\}\}^\ast$$ is called a {\it braid placement} if for all $\a \in \A$, one has 
$$\p(\a) \in \{\sigma_i^{\pm 1} \mid i \in \{1,2,\ldots,\left\lVert \a \right\rVert-1\}\}^\ast.$$

Any pair consisting of a template $(\C,\A)$ and a braid placement $\p$ determines a link diagram as follows. For each $\a \in \A$, replace $\a$ in $\left(\bigcup \C \right)\cup \left(\bigcup \A\right) \subseteq \mathbb{R}^2$ by a rectangle (see Figure \ref{WSSC}) and insert a braid diagram corresponding to~$\p(\a)$ according to the orientation. We say that the resulting link diagram is {\it determined} by $\p$. Note that any link diagram arises in this way.

We say that a template $(\C,\A)$ is {\it knitted} if for any $C_1, C_2 \in \C$ there exists at most one arc~$\a~\in~\A$ intersecting both $C_1$ and $C_2$. For example, for each $n\geq 2$, the template corresponding to the Alexander closure of a braid with $n$ strands is knitted. Besides, the template corresponding to the picture shown in Figure \ref{WSSC} is knitted.

\begin{definition}
Let $\# \in \{+,-\}$. Let $(\C, \A)$ be a 
knitted template, and let $\p$ be a braid placement. 
The link diagram determined by $\p$ is said to be {\it locally $\#$twisted} (resp.\ {\it locally twisted}) if for all $\a \in \A$, 
the braid word $\p(\a)$ is locally $\#$twisted (resp.\ locally twisted).
\end{definition}

\subsection{Homogeneous link diagrams}

Recall that we assume all link diagrams to be oriented.
A~crossing is said to be positive (resp.\  negative) if it has the form on the left (resp.\ right) picture in~Figure~\ref{Smoothing}.

\begin{figure}[H]
\centering
\includegraphics[width = 11cm]{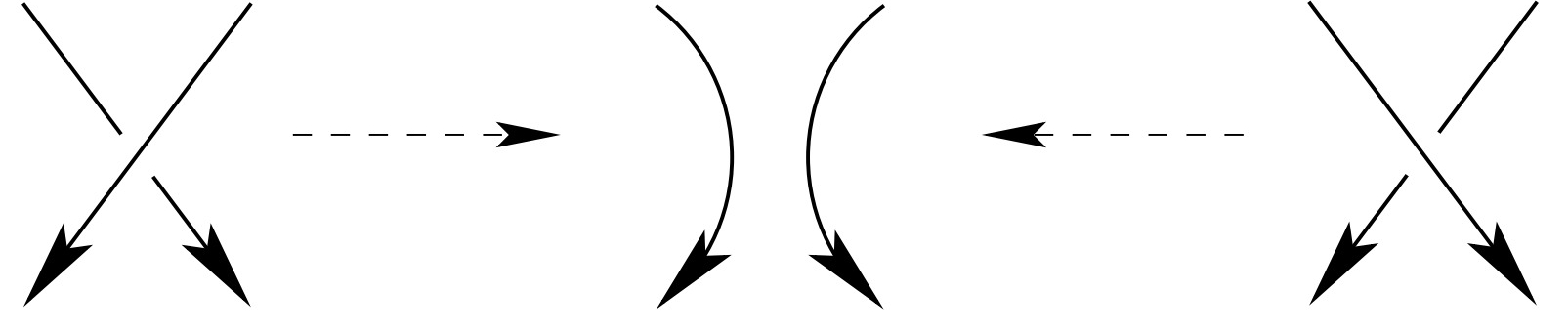}
\caption{Smoothing of a crossing.}
\label{Smoothing}
\end{figure}

Let $\D$ be a link diagram. Simple closed curves obtained by smoothing all crossings of~$\D$ are called {\it Seifert circles} of $\D$.

Let $\Gamma(\D)$ be the undirected multigraph whose vertex set is the set of all Seifert circles of $\D$ and whose 
edge set is the set of all crossings of $\D$. Each edge of $\Gamma(\D)$ endowed with a sign according to the type of the corresponding crossing.
The graph $\Gamma(\D)$ is referred to as the {\it Seifert graph} of~$\D$. Note that $\Gamma(\D)$ is bipartite, that is, it contains no odd-length cycles.

To define homogeneous link diagrams, we follow \cite{Cromwell89}.
Let $\Gamma$ be a Seifert graph.
A~vertex $v$ of $\Gamma$ is said to be a {\it cut vertex} if the number of components of~$\Gamma-v$ is greater than that of $\Gamma$. Suppose that $\Gamma$ contains a cut vertex $v$, and let $\Gamma_1, \Gamma_2, \ldots, \Gamma_n$ be the components of $\Gamma-v$. We say that the $n$ subgraphs $\Gamma_1 \cup v, \Gamma_2 \cup v, \ldots, \Gamma_n \cup v$ are obtained from $\Gamma$ by {\it cutting} $\Gamma$ at $v$. Cutting $\Gamma$ at each of its cut vertices produces a set of components, each one being a subgraph of~$\Gamma$ containing no cut vertices. Such a component is called a {\it block} of $\Gamma$. A block $H$ of~$\Gamma$ is said to be {\it homogeneous} if all edges of~$H$ have the same sign.

A diagram~$\D$ is said to be {\it homogeneous} if each block of~$\Gamma(\D)$ is homogeneous. 
A link is said to be {\it homogeneous} 
if it admits a homogeneous 
diagram.

We list several diagrammatic properties 
related to 
homogeneity.
First, if a diagram~$\D$ is positive, that is, all crossings of~$\D$ are positive, then~$\D$ is homogeneous. Second, if~$\D$ is alternating, then $\D$ is homogeneous. Third, the Seifert circles of a link diagram $\D$ divide the plane into regions. Suppose that each of these regions contains crossings of the same type. Then $\D$ is homogeneous. Fourth, let $(\C, \A)$ be a template, and let $\p$ be a braid placement. If the link diagram determined by $\p$ is homogeneous, then for each~$\a~\in~\A$, the braid word $\p(\a)$ is $r$-homogeneous for some $r \in \{1,-1\}^{\left\lVert \a \right\rVert-1}$. In particular, if the link diagram determined by $\p$ is alternating, then for each~$\a~\in~\A$, the braid word $\p(\a)$ is either $r$-homogeneous or $(-r)$-homogeneous for $r = (-1,1,-1,1, \ldots, (-1)^{\left\lVert \a \right\rVert}) \in \{1,-1\}^{\left\lVert \a \right\rVert-1}$ whenever for all $i \in \{1,2,\ldots,\left\lVert \a \right\rVert-1\}$, the word $\p(\a)$ contains either $\sigma_i$ or $\sigma_i^{-1}$.

\begin{definition}
A link diagram $\D$ is said to be {\it locally twisted homogeneous} if $\D$ is both locally twisted and homogeneous.
\end{definition}

\section{The skein polynomial}

This section aims to state the Morton--Franks--Williams inequality, reduce Theorem \ref{Theorem1} to Corollary \ref{Corollary1}, and introduce resolution trees for the skein polynomial. 

\subsection{The Morton--Franks--Williams inequality}

In \cite{Homfly, PT}, the authors prove that there is a unique function that maps each link diagram~$\D$ to a two-variable Laurent polynomial~$\P(\D;a,z) \in \mathbb{Z}[a^{\pm 1},z^{\pm 1}]$ such that:
\begin{enumerate}
\item if two link diagrams $\D$ and $\D^\prime$ represent the same link, then~$\P(\D; a,z) = \P(\D^\prime; a,z)$;
\item one has
\begin{align*}
a \P(\D_+; a,z) - a^{-1} \P(\D_-; a,z) = z\P(\D_0, a,z)
\end{align*}
whenever $\D_+$, $\D_0$, and $\D_-$ are link diagrams that coincide except at a small region where the diagrams are presented as in Figure~\ref{Smoothing}, respectively;
\item if $\D$ is a knot diagram 
with zero crossings,
then~$\P(\D; a,z) = 1$.
\end{enumerate}
The polynomial~$\P(\D;a,z)$ is referred to as the {\it skein polynomial}, the {\it HOMFLY polynomial}, the {\it HOMFLY-PT polynomial}, the {\it generalized Jones polynomial}, and the {\it twisted Alexander polynomial} of $\D$. The second condition is referred to as the {\it skein relation}.

Given a link $\L$, let~$\P(\L;a,z) := \P(\D;a,z)$ for some (and hence any) diagram $\D$ of $\L$. Denote by $s(\D)$ the number of Seifert circles of~$\D$ and by 
$\omega(\D)$ the {\it writhe} of $\D$, that is, the difference between the number of positive and the number of negative crossings of $\D$.

Let $\Q_i(\L;z) \in \mathbb{Z}[z^{\pm 1}]$ be polynomials such that~$\P(\L;a,z) = \sum_{i=e}^E \Q_i(\L;z) a^i$, $\Q_e(\L;z)~\neq~0$, and $\Q_E(\L;z)~\neq~0$. In \cite[Theorem 1]{Morton}, the author proves that for any diagram~$\D$ of~$\L$, one has
\begin{align}\label{DegSeif0}
-\omega(\D) - (s(\D)-1) \leq e \leq E \leq -\omega(\D) + (s(\D)-1).
\end{align}
In particular,
\begin{align}\label{MFW}
(E-e)/2 + 1 \leq s(\D).
\end{align}
The latter is referred to as the {\it Morton--Franks--Williams inequality}. 
Given a link diagram~$\D$, we say that \eqref{MFW} is {\it sharp} for $\D$ if $(E-e)/2 + 1 = s(\D)$. In this case, $\D$ is optimal. 
Note~that~\eqref{MFW} is sharp if and only if both outer inequalities in \eqref{DegSeif0} are sharp.

\subsection{Reduction of Theorem \ref{Theorem1} to Corollary \ref{Corollary1}}

Given a link diagram $\D$, denote by $|\D|$ the number of crossings of~$\D$.

Let $\P_i(\L; a) \in \mathbb{Z}[a^{\pm 1}]$ be polynomials such that~$\P(\L;a,z) = \sum_{i=m}^M \P_i(\L;a) z^i$, $\P_m(\L;a)~\neq~0$, and $\P_M(\L;a)~\neq~0$. In \cite[Theorem 2]{Morton}, the author proves that for any diagram~$\D$ of~$\L$, one has
\begin{align}\label{DegCrSeif}
M \leq |\D| - s(\D) + 1.
\end{align}
By combining \eqref{MFW} and \eqref{DegCrSeif}, one has 
\begin{align*}
M+(E-e)/2 \leq |\D|.
\end{align*}

It turns out that for all homogeneous diagrams, \eqref{DegCrSeif} is sharp (see \cite[Theorem~4]{Cromwell89}, \cite[Theorem 7.6.2]{CromwellBook}, and \cite[Theorem~6]{HomogeneousSeifert}). It is worth noting that the sharpness of~\eqref{DegCrSeif} does not characterize homogeneous links since \eqref{DegCrSeif} is sharp for an almost positive diagram of $12n_{149}$, which is not homogeneous.

By using the sharpness mentioned above, we prove Proposition \ref{SomeHomogeneousAreMinimal} below.
Assertion {\rm(i)} was firstly observed in \cite[Proposition 7.4]{MurasugiA}.
This assertion implies that Theorem \ref{Theorem1} is a special case of Corollary \ref{Corollary1}.
For the sake of completeness, we provide a proof.

\begin{Proposition}\label{SomeHomogeneousAreMinimal}
Suppose a homogeneous diagram $\D$ of a link $\L$ is optimal.
Then
\begin{enumerate}
\item[{\rm(i)}] the diagram $\D$ is minimal;
\item[{\rm(ii)}] any minimal diagram of $\L$ is optimal.
\end{enumerate}
\end{Proposition}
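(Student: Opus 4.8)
The plan is to combine the Morton inequalities recalled in the excerpt. Write $n = s(\D)$, $c = |\D|$, $w = \omega(\D)$, and let $[e,E]$ be the $a$-span and $[m,M]$ the $z$-span of $\P(\L;a,z)$. Since $\D$ is homogeneous, (\ref{DegCrSeif}) is sharp for $\D$, so $M = c - n + 1$. Since $\D$ is optimal, (\ref{MFW}) is sharp for $\D$, so $(E-e)/2 + 1 = n$. Adding these gives
\begin{align*}
M + (E-e)/2 = c.
\end{align*}

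Now take an arbitrary diagram $\D'$ of $\L$, and set $n' = s(\D')$, $c' = |\D'|$. The invariants $e,E,m,M$ depend only on the link $\L$, not on the diagram, so applying (\ref{DegCrSeif}) to $\D'$ gives $M \le c' - n' + 1$, and applying (\ref{MFW}) to $\D'$ gives $(E-e)/2 + 1 \le n'$, hence $(E-e)/2 \le n' - 1$. Adding the two inequalities yields
\begin{align*}
c = M + (E-e)/2 \le (c' - n' + 1) + (n' - 1) = c'.
\end{align*}
Thus $|\D| \le |\D'|$ for every diagram $\D'$ of $\L$, which is precisely the statement that $\D$ is minimal; this proves (i).

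For (ii), let $\D'$ be a minimal diagram of $\L$. By (i), $c' = |\D'| = |\D| = c$. Feeding $c' = c$ back into the chain of inequalities $c \le (c' - n' + 1) + (n' - 1) = c'$ forces every inequality used to be an equality; in particular $(E-e)/2 + 1 = n'$, i.e.\ (\ref{MFW}) is sharp for $\D'$. As recalled right after (\ref{MFW}), sharpness of the Morton--Franks--Williams inequality means $\D'$ is optimal, which proves (ii).

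There is really no serious obstacle here: the argument is a two-line manipulation of the sharp/non-sharp forms of (\ref{MFW}) and (\ref{DegCrSeif}), the only input being the fact that $e, E, M$ are link invariants and the quoted sharpness of (\ref{DegCrSeif}) for homogeneous diagrams. The one point deserving care is to keep straight which inequalities are sharp for $\D$ (both, by hypothesis and homogeneity) versus merely valid for the competitor $\D'$ (both, but possibly strict), and, for part (ii), to notice that forcing the end-to-end inequality to be an equality forces each intermediate inequality to be an equality as well, so that the sharpness of (\ref{MFW}) — hence optimality — propagates from $\D$ to any minimal $\D'$.
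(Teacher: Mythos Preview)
Your argument contains a genuine gap: the implication ``$\D$ is optimal $\Rightarrow$ (\ref{MFW}) is sharp for $\D$'' is false in general. The paper only records the converse (sharpness of (\ref{MFW}) implies optimality), and indeed there are links --- e.g.\ $9_{42}$ --- for which (\ref{MFW}) is \emph{never} sharp, yet of course these links still possess optimal diagrams. So you cannot conclude $(E-e)/2 + 1 = n$ from the hypothesis that $\D$ is optimal.

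The paper's proof avoids (\ref{MFW}) entirely. For (i) it uses only the sharpness of (\ref{DegCrSeif}) for the homogeneous diagram $\D$, together with the \emph{definition} of optimality $s(\D)\le s(\D')$:
\[
|\D| \;=\; M + s(\D) - 1 \;\le\; \bigl(|\D'| - s(\D') + 1\bigr) + s(\D) - 1 \;\le\; |\D'|.
\]
For (ii), with $|\D'| = |\D|$, it runs (\ref{DegCrSeif}) for $\D'$ backwards to get $s(\D') \le |\D'| - M + 1 = s(\D)$, so $\D'$ is optimal. Your approach can be repaired by dropping the appeal to (\ref{MFW}) and arguing exactly this way; the MFW inequality is simply not needed here, and invoking it is what introduces the error.
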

\begin{proof}
Let us prove assertion {\rm(i)}. Let $\D^{\prime}$ be an arbitrary link diagram of~$\L$. 
Since~$\D$ is optimal, the inequality $s(\D) \leq s(\D^{\prime})$ holds. One has
\begin{align*}
|\D| \stackrel{\eqref{DegCrSeif}}{=} M + s(\D) - 1 \stackrel{\eqref{DegCrSeif}}{\leq} (|\D^{\prime}| - s(\D^{\prime}) + 1) + s(\D) - 1 \leq |\D^{\prime}|.
\end{align*}
Therefore, $\D$ is minimal.

Let us prove assertion {\rm(ii)}. Let $\D^\prime$ be a minimal diagram of~$\L$. In particular, $|\D^\prime| = |\D|$. One has
\begin{align*}
s(\D^\prime) \stackrel{\eqref{DegCrSeif}}{\leq} |\D^\prime| - M + 1 \stackrel{\eqref{DegCrSeif}}{=} |\D^\prime| - (|\D| - s(\D) + 1) + 1 = s(\D).
\end{align*}
Therefore, since 
$\D$ is optimal, $\D^\prime$ is optimal too.
\end{proof}

\subsection{Resolution trees}

The process of evaluating the skein polynomial of a link diagram by repeated application of the skein relation is referred to as {\it resolution}. One records this process schematically in a binary structure,
referred to as 
a {\it resolution tree} (see \cite[p.\ ~18]{GM}, \cite[p.\ 538]{Cromwell89}, and~\cite[p.\ 15]{Nakamura}),
a {\it resolving tree} (see \cite[p.\ 193]{LDH}, \cite[p.\ ~4]{BIRAL}, \cite[p.\ 183]{CromwellBook}, \cite[p.\ 4]{Homfly}, and \cite[p.\ 167]{Ad94}), and
a {\it computation tree} (see \cite[p.\ 99]{FW} and \cite[p.\ 653]{Kalman}).
Namely, given a link diagram $\D$, 
a resolution tree for $\D$ is
a weighted 
binary tree~$\T$ such that:
\begin{enumerate}
\item each node of $\T$ is a link diagram;
\item the root node of $\T$ is $\D$;
\item each leaf node of $\T$ represents an unlink;
\item each internal node has exactly two children. The corresponding three link diagrams are identical except at one crossing, and they relate by one of the two relations at that crossing, as shown in Figure \ref{Resolving}. 
\end{enumerate}

\begin{figure}[H]
\centering
\includegraphics[width = 12.5cm]{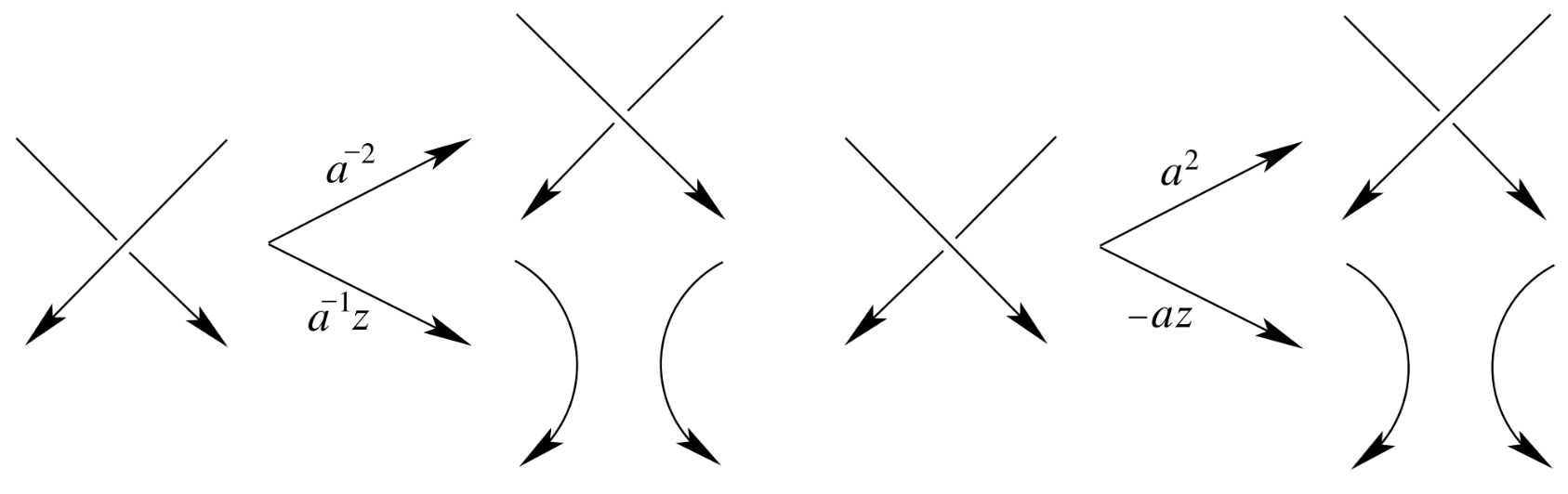}
\caption{The weight assignment for a resolution tree. The picture was taken
from \cite{LDH}.}
\label{Resolving}
\end{figure}

We refer to the lines connecting nodes of a binary tree as {\it branches}.

Any resolution tree $\T$ of $\D$ gives rise to a decomposition of~$\P(\D; a,z)$ as a sum, which is indexed by leaf nodes of $\T$, as follows. Calculations show that if $\D$ is the link diagram with zero crossings and with precisely $n$ link components, one has $$\P(\D; a,z) = ((a-a^{-1})z^{-1})^{n-1}.$$ Note that the skein relation admits the following equivalent forms:
\begin{align*}
\P(\D_+; a,z) &= a^{-2} \P(\D_-; a,z) + a^{-1}z\P(\D_0, a,z), \\
\P(\D_-; a,z) &= a^{2} \P(\D_+; a,z) - a z\P(\D_0, a,z).
\end{align*}
Let $\U$ be a leaf node of $\T$. Denote by $\gamma(\U)$ the number of link components of $\U$. Let~$P$ be a unique path on $\T$ from the root node $\mathcal{D}$ to the leaf node $\mathcal{U}$. It is easy to see that the contribution of $\U$ to~$\P(\D; a,z)$ is $((a-a^{-1})z^{-1})^{\gamma(\U)-1}$ multiplied by the weights of the branches in $P$. Let $t(\U)$ be the number of crossings of $\D$ that one smoothed in obtaining~$\U$, and $t^-(\U)$ be the number of negative crossings among the smoothed ones. As~Figure \ref{Resolving} shows, the degree of $a$ in the weight of a branch is equal to the change of writhe from 
the parent to the child.
Besides, a $z$ term in the weight of the branch indicates that the 
child is obtained from the 
parent 
by a crossing smoothing and a negative sign in the weight indicates that the smoothed crossing is negative. It follows that 
\begin{align*}
\P(\D; a,z) = \sum\limits_{\U \in \T_\circ} (-1)^{t^-(\U)} z^{t(\U)} a^{\omega(\U) - \omega(\D)} ((a-a^{-1})z^{-1})^{\gamma(\U)-1},
\end{align*}
where $\mathcal{T}_\circ$ is the set of leaf nodes of $\mathcal{T}$. 

Following \cite{BIRAL},
we think of a resolution tree as a graph of a branching process. 
Namely, at each internal node, one takes a crossing of the current link diagram and branch on smoothing and flipping the crossing. Hence, to specify a resolution tree, we can describe a rule that determines the corresponding crossings of intermediate link diagrams. The rule determines the evolution of the branching process.

With these ideas in mind, we describe resolution trees introduced in \cite{BIRAL}.

\section{Coherent resolution trees}

This section aims to 
describe two classes of resolution trees introduced in~\cite{BIRAL}. We~call the corresponding trees {\it $\X$-coherent} and {\it $\Y$-coherent}. The description is rather complex, so we start with a simpler construction of {\it descending resolution trees}.

\subsection{Preliminaries on link diagrams}

Let $\D \subseteq \mathbb{R}^2$ be a link diagram. 
Let $x \in \D$, and let $C$ be a Seifert circle of~$\D$. We say that $x$ is a {\it point on~$C$} if $x$ lies in the intersection of~$C$ and~$\D$. We say that $x$ is a {\it point on~$\D$} if $x$ is a point on a Seifert circle of~$\D$. 

Let~$\D$ be a link diagram and $m = \gamma(\D)$. Let $p_1,p_2,\ldots,p_m$ be a sequence of points on~$\D$ such that for $i\neq j$, the points $p_i$ and $p_j$ lie on distinct link components of~$\D$.
We refer to~$p_1,p_2,\ldots,p_m$ as {\it base points}.

We travel through $\D$ by moving along each link component of $\D$ as follows. We start at the first base point $p_1$ and move according to the orientation. As we reach $p_1$ again, we proceeding to the second marked point $p_2$ and start moving according to the orientation. We continue in the same way until one visits the link component containing $p_m$ entirely. This process is called the {\it natural travel} determined by $p_1,p_2,\ldots,p_m$. 

Let $p_1,p_2,\ldots,p_m$ be a sequence of base points on $\D$. 
During 
the corresponding 
natural travel, we visit each crossing of $\D$ exactly twice. A crossing of $\D$ is said to be {\it descending} (resp.\  {\it ascending}) if one travels along the overpassing (resp.\  underpassing) strand first. The diagram~$\D$ is said to be {\it descending} (resp.\  {\it ascending}) if each crossing of~$\D$ is descending (resp.\  ascending). 
We emphasize that these diagrammatic properties depend on the base points.
If $\D$ is descending (resp.\  ascending), then the components of $\D$ are both layered from top to bottom (resp.\  from bottom to top) and represent the unknots. In this case, the diagram~$\D$ represents the unlink with precisely $m$ link components.

Let $x$ be a point on $\D$.
We travel through $\D$ according to the orientation. Denote by $\M(\D; x)$ the longest path starting at $x$ one traveled before meeting either $x$ or
an ascending crossing, that is, a crossing of $\D$ such that one meets its underpassing strand first. We refer to the~path~$\M(\D;x)$ as a {\it maximal descending path} in $\D$ starting at $x$.

We define a {\it maximal ascending path} in $\D$ starting at $x$ similarly.

\subsection{Definition of descending resolution trees}

Let $\D \subseteq \mathbb{R}^2$ be a link diagram. Let us describe all descending resolution trees for $\D$ at once. The~construction of each descending resolution tree $\T$ of $\D$ 
consists of several phases. The resulting tree $\T$ depends on sequences of base points on $\D$.

We start with the one node tree~$\T_0$. At~the end of phase $k$, we obtain a rooted subtree~$\T_k$ of~$\T$. The resulting subtrees satisfy $$\{\D\} = \T_0 \subset \T_1 \subset \ldots \subset \T_m = \T.$$ Thus, one obtains each of the trees from the previous one by extensions shown in Figure~\ref{Resolving}.

In the first phase, we choose an arbitrary base point~$x_1$ on~$\D$. We find the maximal descending path~$\M(\D;x_1)$ on~$\D$ starting at~$x_1$. If~$\M(\D;x_1)$ is closed, that is,~$\M(\D;x_1)$ is the whole link component of~$\D$ containing~$x_1$, then the phase ends. Assume~$\M(\D;x_1)$ is not closed. In~this case, we extend the current tree at a crossing of~$\D$ that is the end of the path~$\M(\D;x_1)$. At this moment, the tree~$\T_1$ consists of three nodes: a parent~$\D$ and its children~$\D^\prime$ and~$\D^{\prime\prime}$ such that $|\D^\prime| = |\D^{\prime\prime}|+1$. Then, we find the maximal descending path~$\M(\D^\prime;x_1)$ on~$\D^\prime$ starting at~$x_1$. If~$\M(\D^\prime;x_1)$ is not closed, then we extend the current tree $\T_1$ similarly by adding children of~$\D^\prime$. We repeat the same procedure for all leaf nodes~$\U$ of the current tree~$\T_1$. At~the end of the first phase, for each leaf node~$\U$ of~$\T_1$, the maximal descending path~$\M(\U;x_1)$ on~$\U$ starting at~$x_1$ is closed. In this case,~$\M(\U;x_1)$ is the whole link component of~$\U$ containing~$x_1$. If~each leaf node of $\T_1$ is a knot diagram, then the construction of~$\T$ ends. Otherwise, we move to the next phase.

\begin{figure}[H]
\centering
\includegraphics[width=9cm]{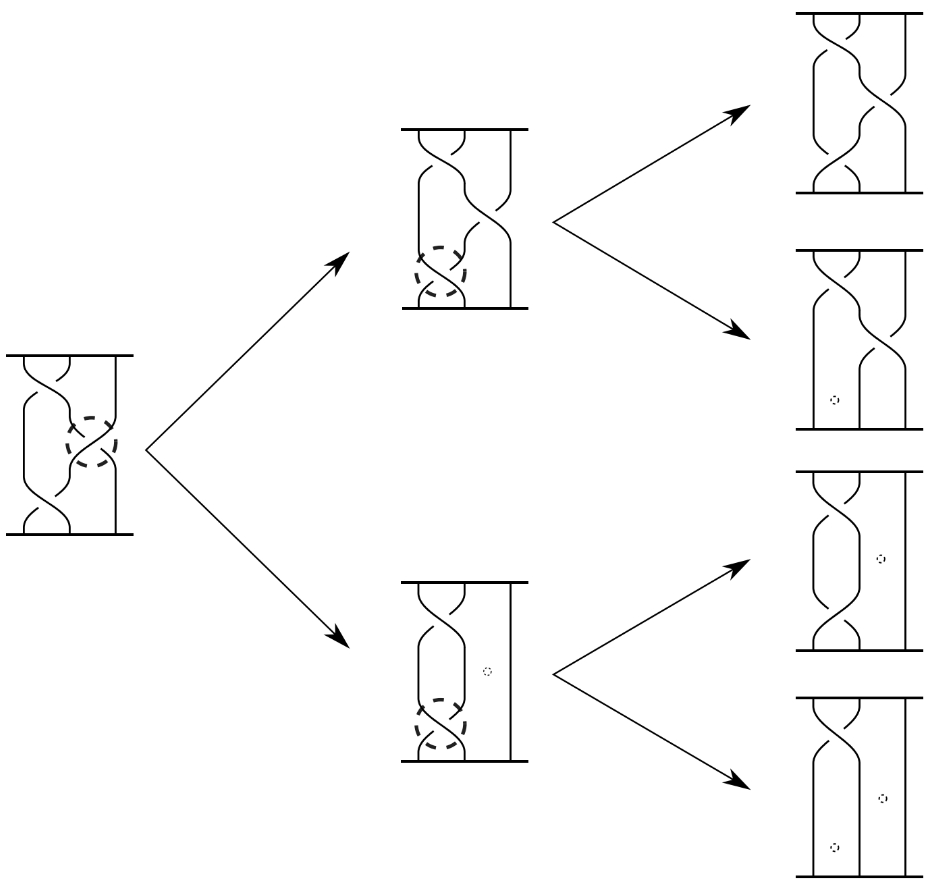}
\caption{An example of a descending resolution tree. For each node of the tree, the base point is 
a point on the top left arc. 
The picture was taken
from \cite{LDH}.}
\label{RTExamaple}
\end{figure}

In the second phase, we extend $\T_1$ as follows. For each leaf node $\V$ of $\T_1$, we choose an arbitrary base point $x_2$ on a link component of $\V$ distinct from that of containing~$x_1$.
We emphasize that the point $x_2$ depends on $\V$.
We find the maximal descending path~$\M(\V;x_2)$ on~$\U$ starting at~$x_2$. If $\M(\V;x_2)$ is not closed, then we extend the current tree at a crossing of $\V$ that is the end of the path $\M(\V; x_2)$. We proceed similarly. The second phase continues until, for each leaf node $\V$ of the current tree $\T_2$, the path $\M(\V;x_2)$ is closed. In this case,~$\M(\V;x_2)$ is the whole link component of~$\V$ containing $x_2$.

We repeat the same procedure until all leaf nodes of the current tree are descending diagrams. Since any descending diagram represents an unlink, the resulting tree $\T$ is a resolution tree.

By varying the base points on the leaf nodes of the trees $\T_0, \T_1, \ldots, \T_{m-1}$, we obtain a few descending resolution trees for $\D$. 

See Figure \ref{RTExamaple} for a descending resolution tree example of the Alexander closure of~$\sigma_1^{-1}\sigma_2\sigma_1^{-1}$. For the sake of simplicity, one presents the nodes as braid diagrams.

The class of descending resolution trees was introduced in \cite[Theorem 2]{Cromwell89}. This class gives an idea of coherent resolution trees.

\subsection{Definition of coherent resolution trees}

We are in the position of describing $\X$-coherent and $\Y$-coherent resolution trees. 
Recall that all link diagrams lie on the oriented plane $\mathbb{R}^2$. 
Thus, one may distinguish clockwise and counterclockwise Seifert circles of $\D$.

Let $\D$ be a link diagram. Let us describe all $\X$-coherent resolution trees for $\D$ at once. The~construction of each $\X$-coherent resolution tree $\T$ of $\D$ consists of several phases. The resulting tree $\T$ depends on sequences of base points on $\D$.

We start with the one node tree~$\T_0$. At~the end of phase $k$, we obtain a rooted subtree~$\T_k$ of~$\T$. The resulting subtrees satisfy $$\{\D\} = \T_0 \subset \T_1 \subset \ldots \subset \T_m = \T.$$ Thus, one obtains each of the trees from the previous one by extensions shown in Figure~\ref{Resolving}.

Following \cite{BIRAL}, we define two rules called {\it descending} and {\it ascending} ones. In the descending rule, one keeps a descending crossing unchanged and branches on flipping and smoothing an ascending one. 
In the ascending rule, one keeps an ascending crossing currently visited and branches on flipping and smoothing a descending one.

At each phase, we follow either the descending or ascending rule. 
By definition, we follow the descending rule if and only if the Seifert circle containing the current base point is clockwise.

More precisely, in the first phase, we choose an arbitrary base point $x_1$ on $\D$. We~find the maximal descending (resp.\  ascending) path in $\D$ starting at $x_1$ whenever the corresponding Seifert circle is clockwise (resp.\  counterclockwise). If the path is not closed, then we extend the current tree at a crossing of $\D$ that is the end of the path. We repeat the same procedure for all leaf nodes $\U$ of the current tree~$\T_1$. At the end of the first phase, for each leaf node~$\U$ of~$\T_1$, the maximal descending (resp.\  ascending) path in~$\U$ starting at~$x_1$ is closed. If each leaf node~$\U$ of~$\T_1$ is a knot diagram, then the construction ends. Otherwise, we move to the next phase.

In the second phase, we extend $\T_1$ as follows. For each leaf node $\V$ of $\T_1$, we choose an arbitrary base point $x_2$ on a link component of $\V$ distinct from that of containing $x_1$.
We emphasize that the point $x_2$ depends on $\V$.
We find the maximal descending (resp.\  ascending) path in~$\V$ starting at $x_2$ whenever the corresponding Seifert circle is clockwise (resp.\  counterclockwise). If the path is not closed, then we extend the current tree at a crossing of~$\V$ that is the end of the path. We continue similarly. The second phase ends when for each leaf node~$\V$ of the current tree $\T_2$, the maximal descending (resp.\  ascending) path in~$\V$ starting at $x_2$ is closed.

We repeat the same procedure until, for each leaf node $\U$ of the current tree, one visits all link components of $\U$. This completes the construction of $\T$.

It is easy to see that for any leaf node $\U$ of $\T$, each link component of $\U$ is either descending or ascending. Also, the link components of $\U$ are stacked over each other. Thus, $\U$ represents an unlink. Therefore, $\T$ is a resolution tree.

By varying the base points on the leaf nodes of the trees $\T_0, \T_1, \ldots, \T_{m-1}$, we obtain a few~$\X$-coherent resolution trees for $\D$.

We define $\Y$-coherent resolution trees similarly. Namely, we follow the descending rule if and only if the Seifert circle containing a current base point is counterclockwise.

\section{Special coherent resolution trees}

To prove Theorem \ref{Theorem2}, we use specific $\X$-coherent and $\Y$-coherent resolution trees referred to as {\it special}. 
In our approach, a class of special coherent resolution trees corresponds to the ordered pair consisting of a link template and a braid placement.
To define these resolution trees, we refer to castle structures for link diagrams introduced in \cite{BIRAL}. The definition of special coherent resolution trees requires specific types of castles that we call {\it appropriate}. In \cite[Lemma~4.3]{BIRAL}, the authors claim that any link diagram admits an appropriate castle. There are several inaccuracies in their proof. Thus, we fix it.

\subsection{Castle structures for link diagrams}

Following \cite{BIRAL}, we describe an additional structure for a link diagram, which is called a {\it castle}. Each castle consists of several segments on Seifert circles, which are called {\it floors}, and several crossings between them, which are called {\it ladders}. 
We endow each floor $F$ with a non-negative integer number referred to as the {\it level of $F$}. By definition, each castle has a unique floor of level $0$ and may have floors of higher levels. 

By definition, any Seifert circle of a diagram $\D$ contains at most one floor of a castle of~$\D$. Thus, each castle represents a rooted subgraph of $\Gamma(\D)$ with a root corresponding to the Seifert circle containing a unique floor of level $0$. The edge set of this graph is the set of all ladders. In these notations, the level of a floor is the distance from the root to the corresponding Seifert circle. 

We identify the plane $\mathbb{R}^2$ with the punctured $2$-sphere $S^2$. Let $\D \subseteq \mathbb{R}^2$ be a link diagram. A Seifert circle $C$ of $\D$ is said to be {\it innermost} if $C$ bounds a closed disk in $S^2$ that contains no Seifert circles of $\D$ in its interior. By definition, the topological type of each castle of~$\D$ is uniquely determined by the ordered pair consisting of an innermost Seifert circle~$C$ of~$\D$ and a base point on~$C$.

\begin{figure}[H]
\centering
\includegraphics[width = 14cm]{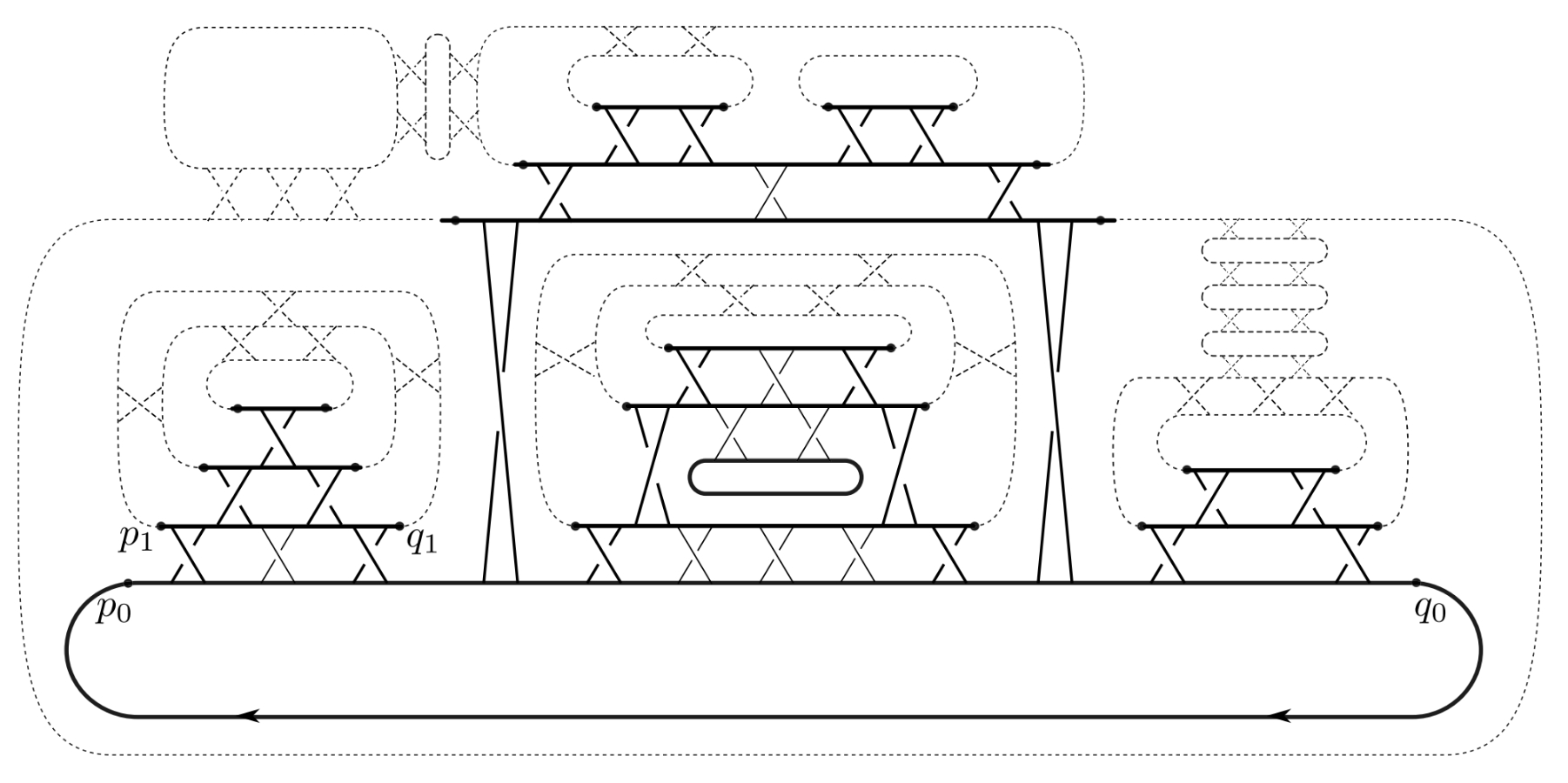}
\caption{An example of a castle. The picture was taken
from \cite{BIRAL}.}
\label{Castle}
\end{figure}

Let $(C,x)$ be the ordered pair consisting of an innermost Seifert circle $C$ of $\D$ and a base point~$x$ on~$C$.
We describe an algorithm for constructing the castle ${\rm Cas}(C,x)$ of~$\D$ determined by~$(C, x)$. We denote by $\Gamma(C,x)$ the rooted subgraph of $\Gamma(\D)$ corresponding to ${\rm Cas}(C,x)$ in the above sense. For clarity, we describe the construction of $\Gamma(C,x)$ in parallel. 

Assume $C$ has no incident crossings. Then the castle ${\rm Cas}(C,x)$ consists only of an arbitrary segment of $C$, and the construction ends. In other words, $\Gamma(C,x)$ consists only of a root~$C$.

Assume $C$ is incident to at least one crossing of~$\D$. Starting at~$x$ and following the orientation of $C$, let us order all crossings incident to~$C$. Since $C$ is innermost, the crossings lie on the same side of $C$. Denote by~$p_0$ and~$q_0$ the boundary points of a segment on $C$ that involves all of the crossings such that $p_0 = x$ and $q_0$ is the point immediately after the last (with respect to the order above) crossing incident to~$C$ (see Figure~\ref{Castle}). This segment of~$C$ is called a {\it floor of level~$0$} of the castle ${\rm Cas}(C,x)$. Recall that by definition, each castle has a unique floor of level $0$.

We are in the position of constructing floors of level $1$. 

Let $C^\prime$ be a Seifert circle of $\D$ that shares at least one crossing with $C$. Starting at~$x$ and following the orientation of $C$, let us order all of them. Let $p_1$ and $q_1$ be two points on~$C^\prime$ immediately before the first such crossing and immediately after the last one, respectively. The segment of~$C^\prime$ starting at $p_1$ and ending at~$q_1$ is called a {\it floor of level~$1$} of the castle~${\rm Cas}(C,x)$. In the same way, we construct floors of level~$1$ for all other Seifert circles of~$\D$ that share at least one crossing with~$C$. In other words, we construct all vertices of~$\Gamma(C,x)$ that locate at a distance of $1$ from the root.

We are in the position of constructing floors of level $2$.

Let $C^\prime$ be a Seifert circle containing a floor $F$ of level $1$.  
Let us construct all floors of level~$2$ incident to $F$. 
Assume $C$ is the only Seifert circle of $\D$ that shares at least one crossing with $F$. Then the construction terminates.
Assume $F$ 
shares crossings
with other Seifert circles of $\D$ other than $C$. Note that such crossings may lie on both sides of~$F$. Let $C^{\prime\prime}$ be a Seifert circle of $\D$ that shares at least one crossing with $F$. 
Following the orientation of $F$, let us order all of them. 
Let~$p_2$ and~$q_2$ be two points on $C^{\prime\prime}$ immediately before the first such crossing and immediately after the last one, respectively. The segment of $C^{\prime\prime}$ starting at~$p_2$ and ending at~$q_2$ is called a {\it floor of level $2$} of the castle~${\rm Cas}(C,x)$. In the same way, we construct floors of level~$2$ for all other Seifert circles of~$\D$ that share at least one crossing with~$F$. In other words, we construct all vertices of~$\Gamma(C,x)$ that are incident to $C^\prime$ and locate at a distance of $2$ from the root.  

We repeat the same procedure for all floors of level $1$, and thus we construct all floors of level~$2$. 

One defines floors of higher levels similarly. Namely, we repeat the same construction
until each floor of maximal level shares at least one crossing with precisely one Seifert circle of $\D$. Eventually, we construct all floors of the castle. A crossing $s$ of $\D$ is called a {\it ladder} if $s$ connects two floors of the castle. By definition, the castle~${\rm Cas}(C,x)$ is the collection of all such floors and ladders. This completes the definitions of both~${\rm Cas}(C,x)$ and~$\Gamma(C,x)$.

We emphasize that there may be crossings of $\D$ that are not ladders of the castle and Seifert circles of $\D$ that do not contain floors of the castle (see dashed objects in Figure~\ref{Castle}). By the construction, the graph obtained from $\Gamma(C,x)$ by identifying all edges that have the same ends is a tree.

\subsection{Appropriate castles}

Let $\D \subseteq \mathbb{R}^2$ be a link diagram, and let $(C,x)$ be the ordered pair consisting of an innermost Seifert circle $C$ of $\D$ and a base point~$x$ on~$C$. Let ${\rm Cas}(C,x)$ be the castle determined by $(C,x)$.

Here and below, we interpret a ladder as a segment that connects two floors. Let $F_1$ and $F_2$ be two floors of ${\rm Cas}(C,x)$ that share at least one ladder.
Following the orientation of $F_1$, let us order all of them. Let $s_1$ and $s_2$ be two adjacent (with respect to the order above) ladders whose ends are $F_1$ and $F_2$. Denote by $F_1^\prime$ and $F_2^\prime$ the subsegments of $F_1$ and $F_2$, respectively, connecting $s_1$ and $s_2$.
Recall that we identify the plane $\mathbb{R}^2$ with the punctured $2$-sphere $S^2$. The circle on $\mathbb{R}^2$ determined by~$F_1^\prime$,~$F_2^\prime$,~$s_1$, and~$s_2$ divides $S^2$ into two closed disks. Denote by $\mathbb{D}$ the one that does not contain the base point $x$.
The disk~$\mathbb{D}$ is called a {\it trap} if its interior contains a floor of the castle ${\rm Cas}(C,x)$.
We say that the ordered pair~$(C,x)$ and the castle~${\rm Cas}(C,x)$ are {\it appropriate} if ${\rm Cas}(C,x)$ has no traps.

For example, a castle shown in Figure \ref{Castle} has two traps.

In \cite[Lemma 4.3]{BIRAL}, the authors claim that any link diagram has an innermost
Seifert circle $C$ and a base point~$x$ on~$C$ such that $(C,x)$ is appropriate. However, their proof contains several inaccuracies.
Below, we specify this result and prove it by using similar arguments. 

We start with preliminaries on link diagrams determined by templates. 

\begin{definition}\label{InducedTemplate}
Let $(\C, \A)$ be a template and $\p$ be a braid placement. Let $\D$ be the link diagram determined by $\p$. 
Suppose one smoothed or flipped some crossings of $\D$ and then deleted some link components. Let $\D^\prime$ be the resulting link diagram. We describe a template~$(\C^\prime, \A^\prime)$ and a braid placement $\p^\prime$ such that the link diagram determined by~$\p^\prime$ is~$\D^\prime$.
To deal with the first two transformations, for each $\a \in \A$, we smooth and flip the corresponding crossings of~$\p(\a)$. 
To deal with the third transformation, given a link component $t$ that one deleted from $\D$ in obtaining $\D^\prime$, we proceed as follows. 
First, we delete from $\left(\bigcup \C \right)\cup \left(\bigcup \A\right)$ the segments of circles in $\C$ that correspond to $t$.
Second, for each arc $\a$, such that $t$ contributes to $\p(\a)$, we delete the strand determined by $t$ from the braid diagram corresponding to $\a$. If there is an arc $\a$ such that $\left\lVert \a \right\rVert=2$, then, after deleting the strand, we delete this arc from the current template.
Third, we rearrange the remaining segments of the circles in $\C$ by moving the ends of these segments along the remaining arcs to obtain a template. 
We repeat the same procedure for all link components that one deleted from $\D$ in obtaining $\D^\prime$. The resulting braid words $\p^\prime(\a)$ are uniquely defined up to far commutativity.
We say that the template~$(\C^\prime, \A^\prime)$ and the braid placement $\p^\prime$ are {\it induced} from $(\C, \A)$ and $\p$, respectively, by the given smoothing, flipping, and link components deletion transformations.
\end{definition}

Let $(\C, \A)$ be a template and $\p$ be a braid placement. Let $\D$ be the link diagram determined by $\p$. Recall that to obtain $\D$, for each $\a \in \A$, one replaces $\a$ in $\left(\bigcup \C \right)\cup \left(\bigcup \A\right)$ by a rectangle and inserts a braid diagram corresponding to~$\p(\a)$ according to the orientation. Following \cite{Nak20}, we refer to these rectangles as {\it braid boxes} of $\D$.

\begin{Lemma}\label{TrappedCastle}
There exists an appropriate pair $(C,x)$ of $\D$ such that $x$ lies in the complement of the braid boxes of $\D$.
\end{Lemma}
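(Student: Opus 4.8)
The goal is to produce, for the link diagram $\D$ determined by a template $(\C,\A)$ and a braid placement $\p$, an appropriate pair $(C,x)$ whose base point lies outside all braid boxes. The plan is to first choose $C$ to be an innermost Seifert circle of $\D$ that additionally lies in the complement of the braid boxes, and then, among all base points on $C$ outside the braid boxes, choose one that minimizes (in an appropriate sense) the ``size'' of the resulting castle, measured for instance by the total number of floors or by a lexicographic ordering on the multiset of levels. I would then argue by contradiction: if the minimizing castle ${\rm Cas}(C,x)$ still had a trap $\mathbb{D}$, I would show that moving the base point $x$ to a suitable point near the trap strictly decreases the chosen complexity, contradicting minimality. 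This is the same strategy sketched in \cite[Lemma~4.3]{BIRAL}, and the point of the present lemma is to carry it out carefully while simultaneously controlling the location of $x$ relative to the braid boxes.

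\smallskip

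First I would verify the existence of an innermost Seifert circle disjoint from the braid boxes. Since the braid boxes are disjoint rectangles, the Seifert circles of $\D$ that pass through a braid box all run through it as parallel strands; an innermost circle in $S^2$ bounds a disk with no Seifert circles inside, and I would check that such a disk, if it met the interior of a braid box, would force a Seifert circle (one of the parallel strands) to lie inside it, a contradiction — so an innermost $C$ automatically avoids the interiors of the braid boxes, and in fact $C$ contains an arc lying in the complement of all braid boxes (namely the portion of $C$ running along the circles of $\C$ rather than through a rectangle). The base point $x$ can then be taken on that arc. For the main argument I would set up the complexity function $\mu(C,x)$ (say, the number of floors of ${\rm Cas}(C,x)$, or a lexicographic refinement thereof) and note that there are only finitely many admissible pairs up to the relevant equivalence, so a minimizer exists.

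\smallskip

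The heart of the proof is the trap-removal step. Suppose ${\rm Cas}(C,x)$ has a trap $\mathbb{D}$, bounded by subsegments $F_1'$, $F_2'$ of two floors and two adjacent ladders $s_1$, $s_2$, with a floor $F$ of the castle in its interior. Following \cite{BIRAL}, I would relocate the base point: pick a new base point $x'$ on the Seifert circle carrying $F$ (or on a circle interior to $\mathbb{D}$), lying outside the braid boxes — which is possible since, as above, an innermost circle and more generally any Seifert circle has a portion outside the rectangles, but here one must be slightly careful and instead argue that $F$ itself, being a floor, overlaps a circle that has such a portion, or else pass to a circle innermost within $\mathbb{D}$. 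Re-rooting the castle at $x'$ ``turns the trap inside out'': the part of the castle that was trapped becomes lower-level, and a counting argument shows $\mu(C',x') < \mu(C,x)$, where $C'$ is the new innermost circle. I would make this precise by comparing the rooted trees $\Gamma(C,x)$ and $\Gamma(C',x')$ and showing that the re-rooting strictly reduces the number of floors (the floors that were ``hidden behind'' the trap are now reachable without the detour), contradicting minimality. Hence the minimizer has no traps and is appropriate.

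\smallskip

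\textbf{Main obstacle.} The delicate point is the bookkeeping in the trap-removal step: one must show that the new pair $(C',x')$ is again \emph{admissible} — i.e.\ $C'$ innermost and $x'$ outside the braid boxes — and that the complexity genuinely drops rather than merely changes shape. The original argument in \cite{BIRAL} is imprecise exactly here (as the excerpt notes), so the work is to pin down the right complexity measure (plain floor-count may not be monotone under re-rooting; a lexicographic order on sorted level-multisets, or a ``total level'' $\sum_F {\rm level}(F)$, is more likely to work) and to check that choosing $x'$ outside the braid boxes is always possible inside the trap. A secondary technical nuisance is that crossings inside a braid box come in the parallel-strand pattern of the inserted braid word, so one must make sure the relocation never forces the base point into such a box; this is handled by always re-rooting at a circle that is innermost within $\mathbb{D}$, which by the first step of the argument again avoids braid-box interiors.
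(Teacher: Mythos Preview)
Your high-level strategy matches the paper's: start from an arbitrary pair $(C,x)$ with $C$ innermost and $x$ outside the braid boxes, and if the castle has a trap $\mathbb{D}$, pass to a new pair rooted at an innermost circle inside $\mathbb{D}$, iterating until no trap remains. However, the actual mechanism you propose for termination---minimizing a complexity $\mu$ such as floor-count, total level, or a lexicographic level-multiset---is not what the paper does, and you yourself flag that you have not verified any such $\mu$ is monotone under re-rooting. That is a genuine gap: the paper explicitly says the original argument in \cite{BIRAL} is inaccurate at precisely this point, and a naive $\mu$ need not decrease, because the new castle is built from scratch at the new root and is not simply a re-rooting of the old tree.

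The paper's fix is structural rather than numerical. Given a trap $\mathbb{D}$ bounded by subfloors of $F_1,F_2$, it picks a Seifert circle $C'$ inside $\mathbb{D}$ sharing crossings with (say) $F_1$, then an innermost $C''$ on the far side of $C'$, and constructs a point $y$ on $C''$ such that the pair $(C'',y)$ is \emph{derived} from $(C,x)$: this means $y$ is outside the braid boxes and, crucially, any floor of ${\rm Cas}(C'',y)$ meeting $F_1$ lies inside $F_1'=F_1\cap\partial\mathbb{D}$. The existence of such a $y$ is proved by induction on the distance from $C''$ to $C'$ in $\Gamma(C'',y')$ (for an auxiliary $y'$), using the induced template on a smoothed subdiagram. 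The derived condition, together with bipartiteness of $\Gamma(\D)$ (which forces ${\rm Cas}(C'',y)$ to have \emph{no} floor on $F_2$), yields two consequences: the number of floors outside the interior of $\mathbb{D}$ strictly drops, and every trap of the new castle is either already a trap of the old one or is contained in $\mathbb{D}$. These two facts together give termination via nesting of traps, without any global complexity function.

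A minor correction to your first step: an innermost Seifert circle of $\D$ certainly can pass through braid boxes (already for the closure of a $2$-strand braid), so your argument that it ``automatically avoids the interiors'' is wrong. What is true---and sufficient---is that every Seifert circle coincides with a circle in $\C$ outside the rectangles, hence carries an arc in the complement of the braid boxes on which $x$ can be placed; you state this correctly in the next clause.
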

\begin{proof}
Let $(C,x)$ be the ordered pair consisting of an innermost Seifert circle $C$ of $\D$ and a base point~$x$ on~$C$ such that $x$ lies in the complement of the braid boxes of $\D$. We suppose that the ordered pair $(C,x)$ is not appropriate and derive from $(C,x)$ new ordered pairs of the above form. After that, we show that a sequence of such derivations leads to an appropriate pair.

Suppose the castle~${\rm Cas}(C,x)$ has at least one trap. Let~$\mathbb{D}$ be any of them. 
Let $F_1$ and~$F_2$ be the floors corresponding to $\mathbb{D}$.
By definition, $\mathbb{D}$ contains at least one floor of the castle~${\rm Cas}(C,x)$. Let $C^\prime$ be a Seifert circle of $\D$ lying within $\mathbb{D}$ such that $C^\prime$ contains a floor sharing crossings with either~$F_1$ or~$F_2$. The circle $C^\prime$ divides $S^2$ into two closed disks. Let $C^{\prime\prime}$ be an arbitrary innermost Seifert circle of~$\D$ lying within the one that does not contain the base point $x$.

Without loss of generality, $C^\prime$ shares crossings with $F_1$. Denote by $F_1^\prime$ the intersection of $F_1$ and the boundary circle of $\mathbb{D}$. We say that the ordered pair $(C^{\prime\prime}, y)$ consisting of the Seifert circle~$C^{\prime\prime}$ of $\D$ and a base point~$y$ on~$C^{\prime\prime}$ is {\it derived from $(C,x)$} if the following conditions hold:
\begin{enumerate}
\item
the point $y$ lies in the complement of the braid boxes of $\D$;
\item
if ${\rm Cas}(C^{\prime\prime},y)$ contains a floor intersecting $F_1$, then this floor lies within $F_1^\prime$. 
\end{enumerate}

We claim that there is at least one point $y$ on $C^{\prime\prime}$ such that the ordered pair $(C^{\prime\prime}, y)$ is derived from $(C,x)$.

Let us prove the claim. Let $y^\prime$ be an arbitrary point on $C^{\prime\prime}$ lying in the complement of the braid boxes of $\D$. If ${\rm Cas}(C^{\prime\prime},y^\prime)$ contains no floors intersecting $F_1$, then there is nothing to prove. Suppose that ${\rm Cas}(C^{\prime\prime},y^\prime)$ contains a floor intersecting $F_1$. 

Since $C^\prime$ separates $C^{\prime\prime}$ from $F_1$, the castle ${\rm Cas}(C^{\prime\prime},y^\prime)$ contains a floor corresponding to~$C^\prime$. Let $C_0, C_1, \ldots, C_m$ be a unique sequence of vertices of $\Gamma(C^{\prime\prime},y^\prime)$ corresponding to the shortest path in $\Gamma(C^{\prime\prime},y^\prime)$ from $C^{\prime\prime}$ to $C^{\prime}$ such that $C_0 = C^{\prime\prime}$ and $C_m = C^\prime$. 

To find at least one point $y$ that satisfies the above conditions, we proceed by induction on $m$. First, suppose $m=0$. In this case, $C^{\prime\prime} = C^\prime$.
Following the orientation of~$F_1^\prime$, let us order all the arcs $\a \in \A$ intersecting both $C_0$ and $F_1^\prime$. Let $y$ be a point on $C_0$ immediately before the first such arc. It is easy to see that the ordered pair~$(C_0, y)$ is derived from~$(C,x)$.

Second, suppose $m \geq 1$. The Seifert circle $C_1$ divides $S^2$ into two closed disks. 
Let us smooth all crossings of the diagram $\D$ that lie within the one that does not contain the base point $x$. Then, let us delete all link components that correspond to Seifert circles lying within the interior of this closed disk. Let~$\D^\prime$ be the resulting link diagram. Let~$(\C^\prime, \A^\prime)$ and~$\p^\prime$ be the template and the braid placement, respectively, induced from $(\C, \A)$ and~$\p$ by these transformations. Note that $\C^\prime$ is a subset of $\C$, $\C^\prime$ contains $C$, and $C_1$ is an innermost Seifert circle of $\D^\prime$. By the induction hypothesis applied to $\D^\prime$, there is at least one point~$y^\prime$ on the Seifert circle $C_1$ of $\D^\prime$ such that the ordered pair $(C_1, y^\prime)$ is derived from $(C,x)$. By moving $y^\prime$ along $C_1$, we can assume that $y^\prime$ lies in the complement of the braid boxes of~$\D$. 

Starting at $y^\prime$ and following the orientation of $C_1$, let us order all the arcs $\a \in \A$ intersecting both $C_0$ and $C_1$ such that $C_0$ and $C_1$ share at least one crossing corresponding to $\a$. 
Let $y$ be a point on $C_0$ immediately before the first such arc. 
Denote by $C_{m+1}$ the Seifert circle of $\D$ corresponding to the floor $F_1$.
By the definition of $y$, any ladder of ${\rm Cas}(C_1,y^\prime)$ that connects $C_1$ and $C_2$ is a ladder of ${\rm Cas}(C_0,y)$. 
It follows that for each~$k \in \{2,3,\ldots,m+1\}$, if the castle ${\rm Cas}(C_0,y)$ has a floor $F$ corresponding to $C_k$, then 
the castle ${\rm Cas}(C_1,y^\prime)$ has a floor containing $F$. Therefore, the ordered pair $(C_0, y)$ is derived from~$(C,x)$. The claim is proved.

It remains to show that by using the derivations described above, one obtains an appropriate pair. Since $\Gamma(\D)$ is bipartite, $C^\prime$ and $F_2$ share no crossings. Thus, the castle~${\rm Cas}(C_0,y)$ has no floors corresponding to $F_2$. Therefore, the derivation decreases the number of floors in the complement of the interior of the current trap.
By the construction, any trap of ${\rm Cas}(C_0,y)$ is either a trap of ${\rm Cas}(C,x)$ or contained within $\mathbb{D}$. Therefore, the new traps are nested, and thus, the derivations lead to a castle without traps. The lemma is proved.
\end{proof}

\subsection{Definition of special coherent resolution trees}

Following \cite{BIRAL}, we define coherent resolution trees referred to as special. 
In our approach, a class of resolution trees corresponds to the ordered pair consisting of a link template and a braid placement.

Let $(\C, \A)$ be a template and $\p$ be a braid placement. Let $\D \subseteq \mathbb{R}^2$ be the link diagram determined by $\p$.

Let us describe all special $\X$-coherent resolution trees that correspond to $(\C, \A, \p)$ at once. As for all $\X$-coherent resolution trees for the diagram $\D$, the construction of each special~$\X$-coherent resolution tree $\T$ consists of several phases. Recall that the resulting tree~$\T$ depends on sequences of base points on~$\D$. To complete the definition, we specify the possible sequences of points.

Recall that we start with the one node tree $\T_0$. At the end of phase $k$, we obtain a rooted subtree $\T_k$ of~$\T$. The resulting subtrees satisfy $$\{\D\} = \T_0 \subset \T_1 \subset \ldots \subset \T_m = \T.$$

Let us specify the construction of $\T_1$. 
Let $(C_1,x_1)$ be an arbitrary appropriate pair of~$\D$
such that $x_1$ lies in the complement of the braid boxes of $\D$.
Let $x_1$ be a base point of the first phase. Then, we construct $\T_1$ as described in the definition of $\X$-coherent resolution trees.
If each leaf node of~$\T_1$ is a knot diagram, then the construction of $\T$ ends. Otherwise, we move to the next phase.

Let us specify the construction of $\T_2$. Let $\V$ be a leaf node of $\T_1$. Recall that $\V$ contains the base point $x_1$. Let $\mathrm{LC}(\V; x_1)$ be the link component of $\V$ containing $x_1$. 
Let us delete~$\mathrm{LC}(\V; x_1)$ from~$\V$.
Let $(\C_1, \A_1)$ and $\p_1$ be the template and the braid placement induced from $(\C,\A)$ and $\p$, respectively, by 
the transformations that one applied in obtaining~$\V$ from~$\D$ and this link component deletion.
We emphasize that $(\C_1, \A_1)$ and $\p_1$ depend on both the leaf node $\V$ and the base point $x_1$ on it.
Denote by $\V\backslash \mathrm{LC}(\V; x_1)$ the link diagram determined by $\p_1$.
Let $(C_2,x_2)$ be an arbitrary appropriate pair of~$\V\backslash \mathrm{LC}(\V; x_1)$ such that~$x_2$ lies in the complement of the braid boxes of $\V\backslash \mathrm{LC}(\V; x_1)$.
Let~$x_2$ be a base point of the second phase concerning $\V$. We follow the same rules concerning the choice of the base point for all other leaf nodes of $\T_1$. 
Then we construct $\T_2$ as described in the definition of~$\X$-coherent resolution trees.
Recall that if for each leaf node~$\V$ of~$\T_2$, one visits all link components of $\V$, then the construction of $\T$ ends. Otherwise, we move to the next phase.

Let us specify the construction of $\T_3$. Let $\U$ be a leaf node of $\T_2$. Recall that $\U$ contains two base points: $x_1$ and $x_2$. Let $\mathrm{LC}(\U;x_2)$ be the link component of $\U$ containing~$x_2$. 
Let us delete both $\mathrm{LC}(\U; x_1)$ and~$\mathrm{LC}(\U; x_2)$ from $\U$.
Let $(\C_2, \A_2)$ and $\p_2$ be the template and the braid placement induced from $(\C,\A)$ and $\p$, respectively, by 
the transformations that one applied in obtaining $\U$ from $\D$ and the two link components deletion.
We emphasize that $(\C_2, \A_2)$ and $\p_2$ depend on both the leaf node $\U$ and the base points $x_1$ and $x_2$ on $\U$.
Denote by $\U \backslash (\mathrm{LC}(\U; x_1) \cup \mathrm{LC}(\U; x_2))$ the link diagram determined by $\p_2$.
Let $(C_3,x_3)$ be an arbitrary appropriate pair of~$\U \backslash (\mathrm{LC}(\U; x_1) \cup \mathrm{LC}(\U; x_2))$ such that $x_3$ lies in the complement of the braid boxes of $\U \backslash (\mathrm{LC}(\U; x_1) \cup \mathrm{LC}(\U; x_2))$.
Let $x_3$ be a base point of the third phase concerning $\U$. We follow the same rules concerning the choice of the base point for all other leaf nodes of $\T_2$. 
Then we construct $\T_3$ as described in the definition of~$\X$-coherent resolution trees.

Recall that we repeat the same procedure until, for each leaf node $\V$ of the current tree, one visits all link components of $\V$.
This completes the construction of $\T$.

Recall there may be several distinct appropriate ordered pairs that satisfy the assumption of~Lemma~\ref{TrappedCastle}. By~varying these ordered pairs, we obtain a few special $\X$-coherent resolution trees for $\D$. We emphasize that the set of special $\X$-coherent resolution trees for $\D$ depends on both the link template $(\C, \A)$ and the braid placement $\p$. 
Lemma \ref{TrappedCastle} implies that for any such pair, there exists at least one special $\X$-coherent resolution tree for $\D$.

We define special $\Y$-coherent resolution trees similarly.

\section{Proof of Theorem \ref{Theorem2}}

This section aims to prove Theorem \ref{Theorem2}. One of the key elements of the proof is inequality \eqref{ThisImpliesMFW}, which we state below.

Let $\U$ be a leaf node of a special $\X$-coherent resolution tree $\T$ for a link diagram $\D$. Recall that $\gamma(\U)$ denotes the number of link components of~$\U$,~$t(\U)$ denotes the number of crossings of~$\D$ that one smoothed in obtaining~$\U$, and~$t^-(\U)$ denotes the number of negative crossings among the smoothed ones.
Besides, recall that the skein polynomial~$\P(\D; a,z)$ admits a decomposition as a sum indexed by all leaf vertices of~$\T$ such that the term of the sum that corresponds to $\U$ is 
\begin{align*}
(-1)^{t^-(\U)} z^{t(\U)} a^{\omega(\U) - \omega(\D)} ((a-a^{-1})z^{-1})^{\gamma(\U)-1}.
\end{align*}
In \cite[Section 5]{BIRAL}, the authors show that 
the following inequality holds:
\begin{align}\label{ThisImpliesMFW}
\omega(\U) - \omega(\D) + \gamma(\U) - 1 \leq -\omega(\D) + s(\D)-1.
\end{align}
A similar result concerning the lower $a$-degree bound of~$\P(\D; a,z)$ that a leaf node contributes to holds for special~$\Y$-coherent resolution trees. In particular, these results imply the Morton--Franks--Williams inequality \eqref{DegSeif0}.

We say that a leaf node $\U$ of $\T$ {\it contributes to the highest~$a$-degree term} if \eqref{ThisImpliesMFW} is sharp.

We are in the position of proving Theorem \ref{Theorem2}.

Let $(\C, \A)$ be a knitted template, and let $\p$ be a braid placement. Let $\D$ be the link diagram determined by $\p$. Let $E$ (resp.\  $e$) be the highest (resp.\  the lowest) $a$-degree of the skein polynomial~$\P(\D; a,z)$ of $\D$. 
We aim to prove that if for all $\a \in \A$, the braid word $\p(\a)$ is locally $-$twisted, then~$E = -\omega(\D) + s(\D)-1$. Similar arguments show that if for all $\a \in \A$, the braid word $\p(\a)$ is locally $+$twisted, then~$e = -\omega(\D) - s(\D) + 1$. This implies that the Morton--Franks--Williams inequality is sharp for any locally twisted link diagram.

Suppose that for all $\a \in \A$, the braid word $\p(\a)$ is locally $-$twisted. Let $\T$ be an arbitrary special $\X$-coherent resolution tree for $\D$.

We are in the position of proving equality $E = -\omega(\D) + s(\D)-1$. 
The proof is in three steps. 
In the first step, 
we describe a specific leaf node $\U^\ast$ of $\T$ such that:
\begin{enumerate}
\item[{\rm(i)}] one smoothed all positive crossings of $\D$ in obtaining $\U^\ast$;
\item[{\rm(ii)}] the leaf node $\U^\ast$ contributes to the highest $a$-degree term.
\end{enumerate}

In the second step, 
we show that if a leaf node $\U$ of $\T$ contributes to the highest $a$-degree term, then~$\gamma(\U) = s(\D)$, $\omega(\U) = 0$, and~$t^-(\U) \leq t^-(\U^\ast)$. 
In~the third step, we show that the previous ones imply the result.

\subsection{First step}

We start with the following observation.

\begin{Lemma}\label{Fragment}
Let $r := (-1, -1, \ldots, -1) \in \{1,-1\}^{n-1}$. Suppose an $r$-homogeneous braid word~$u$ represents~$\Delta_{1,n}^{-1}$. Then there exist braid words of the form
$u_1 \sigma_1^{-1}\sigma_2^{-1}\ldots \sigma_{n-1}^{-1} u_2$ and~$u_3 \sigma_{n-1}^{-1}\sigma_{n-2}^{-1}\ldots \sigma_{1}^{-1} u_4$ lying in the same far commutativity class as of the braid word $u$ such that~$u_2, u_3 \in \{\sigma_1^{-1}, \sigma_2^{-1},\ldots, \sigma_{n-2}^{-1}\}^\ast$ and~$u_1, u_4 \in \{\sigma_2^{-1}, \sigma_3^{-1}, \ldots, \sigma_{n-1}^{-1}\}^\ast$. 
\end{Lemma}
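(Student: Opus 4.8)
### Proof plan for Lemma~\ref{Fragment}

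\textbf{Setup.} Throughout, all braid words live in the alphabet $\{\sigma_1^{-1},\dots,\sigma_{n-1}^{-1}\}$, and the relations available are the braid relations $\sigma_k^{-1}\sigma_{k+1}^{-1}\sigma_k^{-1}=\sigma_{k+1}^{-1}\sigma_k^{-1}\sigma_{k+1}^{-1}$ and far commutativity $\sigma_i^{-1}\sigma_j^{-1}=\sigma_j^{-1}\sigma_i^{-1}$ for $|i-j|\ge 2$. Recall $\Delta_{1,n}^{-1}$ is represented by $\delta_{1,n}^{-1}$, i.e.\ by the mirror of \eqref{DeltaWord1}. I will prove the first assertion (existence of $u_1\sigma_1^{-1}\sigma_2^{-1}\cdots\sigma_{n-1}^{-1}u_2$ with $u_2\in\{\sigma_1^{-1},\dots,\sigma_{n-2}^{-1}\}^\ast$, $u_1\in\{\sigma_2^{-1},\dots,\sigma_{n-1}^{-1}\}^\ast$); the second is entirely symmetric, obtained by reading words right-to-left (which reverses the order of the rising/falling runs and swaps the roles of $u_1,u_2$ versus $u_3,u_4$), or equivalently by applying the first assertion to the word $u$ read backwards, which also represents $\Delta_{1,n}^{-1}$ since $\Delta_{1,n}$ is conjugation-symmetric under word reversal.

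\textbf{Key structural fact.} The plan is to use the standard fact that the positive half-twist $\Delta_{1,n}$ has a unique image under the natural map to the symmetric group $S_n$, namely the longest element $w_0$, and more importantly that $\Delta_{1,n}$ is a \emph{positive} braid in which the letter $\sigma_{n-1}$ occurs exactly once. Dually, $\delta_{1,n}^{-1}$ contains $\sigma_{n-1}^{-1}$ exactly once. Since $u$ is $r$-homogeneous with $r=(-1,\dots,-1)$, it contains only negative letters, so as an element of the negative monoid it equals $\delta_{1,n}^{-1}$ there; by the (standard) fact that the negative braid monoid embeds in $B_n$ and the embedding identifies two negative words iff they are related by braid and far-commutativity moves only (Garside theory / cancellativity), $u$ is connected to $\delta_{1,n}^{-1}$ by such moves. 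Hence $u$ also contains $\sigma_{n-1}^{-1}$ exactly once. Now I would argue: in $u$, consider the unique occurrence of $\sigma_{n-1}^{-1}$; since $n-1$ is extremal, every letter to its left that is not $\sigma_{n-2}^{-1}$ commutes past $\sigma_{n-1}^{-1}$, \emph{except} one needs to produce the whole descending run $\sigma_1^{-1}\cdots\sigma_{n-1}^{-1}$ immediately preceding it. This is where the real work is.

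\textbf{Main argument (induction on $n$).} I would proceed by induction on $n$. The base case $n=2$ is immediate: $\delta_{1,2}^{-1}=\sigma_1^{-1}$, take $u_1=u_2=$ empty. For the inductive step, delete the top strand: the braid homomorphism $\phi\colon B_n\to B_{n-1}$ that forgets strand $1$ sends $\sigma_1^{-1}\mapsto 1$ and $\sigma_i^{-1}\mapsto\sigma_{i-1}^{-1}$ for $i\ge 2$, and sends $\Delta_{1,n}^{-1}$ to $\Delta_{1,n-1}^{-1}$; moreover it is compatible with braid and far-commutativity moves, and the image $\phi(u)$ is an $(-1,\dots,-1)$-homogeneous word representing $\Delta_{1,n-1}^{-1}$. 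By induction $\phi(u)$ lies in the same far-commutativity class as a word $v_1\,\sigma_1^{-1}\cdots\sigma_{n-2}^{-1}\,v_2$ of the required shape (in the alphabet with $n-2$ letters). I would then lift this back: track where the occurrence of $\sigma_{n-1}^{-1}$ (the unique one) sits in $u$, use the fact that $\sigma_1^{-1}$'s in $u$ map to $1$ in $B_{n-1}$, and show by a sequence of far-commutativity moves in $u$ that one can bring the unique $\sigma_{n-1}^{-1}$ next to a copy of $\sigma_{n-2}^{-1}$, that next to $\sigma_{n-3}^{-1}$, and so on down to a $\sigma_1^{-1}$, without ever needing a braid relation (only far commutativity), thereby assembling the run $\sigma_1^{-1}\sigma_2^{-1}\cdots\sigma_{n-1}^{-1}$ as a contiguous block, with everything to its right being in $\{\sigma_1^{-1},\dots,\sigma_{n-2}^{-1}\}$ (because $\sigma_{n-1}^{-1}$ already appeared and can't appear again) and everything to its left being in $\{\sigma_2^{-1},\dots,\sigma_{n-1}^{-1}\}$ after the block is formed (the $\sigma_1^{-1}$ of the block being precisely the last $\sigma_1^{-1}$ of $u$, moved to the front of the block).

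\textbf{The main obstacle.} The delicate point is that merely knowing $\sigma_{n-1}^{-1}$ occurs once does not by itself let me build the descending run using \emph{only} far commutativity within the fixed far-commutativity class of $u$ — a priori I might need braid relations, which are not allowed by the statement. The resolution I would use: after applying braid relations freely to reach \emph{some} word $w$ equal to $u$ in $B_n$ having the block $\sigma_1^{-1}\cdots\sigma_{n-1}^{-1}$ (which is easy, since $\delta_{1,n}^{-1}$ itself \emph{is} $(\sigma_1^{-1}\cdots\sigma_{n-1}^{-1})(\sigma_1^{-1}\cdots\sigma_{n-2}^{-1})\cdots\sigma_1^{-1}$ — wait, that's $\delta_{1,n-1}$-like), I must then show $w$ and $u$ are in the same far-commutativity class. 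For this I would invoke the fact, standard in Garside theory for $B_n$, that the positive (equivalently negative) word representing a \emph{square-free} element of $S_n$ that is a \emph{divisor of} $\Delta$ — and any subword of $\delta_{1,n}$ is such — is unique \emph{up to braid and far-commutativity moves but, crucially, when the element's permutation has the descending-run property, up to far-commutativity alone among words with a fixed "shape"}. Concretely, I expect to prove a sublemma: if $p$ and $q$ are $(-1,\dots,-1)$-homogeneous words with the same image in $S_n$, and $q$ begins with the full descending run $\sigma_1^{-1}\cdots\sigma_{n-1}^{-1}$, then $p$ can be brought into the form "(stuff in $\{\sigma_2^{-1},\dots,\sigma_{n-1}^{-1}\}$)$\cdot(\sigma_1^{-1}\cdots\sigma_{n-1}^{-1})\cdot$(stuff in $\{\sigma_1^{-1},\dots,\sigma_{n-2}^{-1}\}$)" using far commutativity only, by locating in $p$ the unique $\sigma_{n-1}^{-1}$ and peeling off, from the right, the last $\sigma_1^{-1}$, then a suitable $\sigma_2^{-1}$, etc. Proving this sublemma cleanly — tracking exactly which letters are "available" to the left and right at each stage, and checking commutativity obstructions never arise because each needed letter $\sigma_k^{-1}$ is separated from the partially-built block by letters of index $\le k-2$ or $\ge k+2$ that commute past — is the technical heart, and I would organize it as a descending induction on the block length being assembled.
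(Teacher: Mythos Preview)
Your proposal contains a genuine error that undermines the entire inductive strategy. You claim that since $u$ is connected to $\delta_{1,n}^{-1}$ by braid and far-commutativity moves, and $\delta_{1,n}^{-1}$ contains $\sigma_{n-1}^{-1}$ exactly once, therefore $u$ also contains $\sigma_{n-1}^{-1}$ exactly once. This is false: braid relation moves do \emph{not} preserve individual letter counts. Already for $n=3$ the word $u=\sigma_2^{-1}\sigma_1^{-1}\sigma_2^{-1}$ is a perfectly good $(-1,-1)$-homogeneous representative of $\Delta_{1,3}^{-1}$, yet it contains $\sigma_{n-1}^{-1}=\sigma_2^{-1}$ twice. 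Your ``key structural fact'' and everything built on it (locating ``the unique occurrence of $\sigma_{n-1}^{-1}$'', the induction via the strand-forgetting homomorphism, the sublemma about peeling off letters) collapses on this example. You even flag the right worry in your ``main obstacle'' paragraph, but the resolution you sketch rests on the same false uniqueness premise.

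The paper's proof avoids all of this by a different, much shorter argument. Rather than constructing the decomposition of $u$ directly, it shows that the \emph{property} ``some word in my far-commutativity class has the required form'' is preserved under a single braid relation move (and trivially under a far-commutativity move). Since the standard word $\delta_{1,n}^{-1}$ manifestly has the property, and any negative word for $\Delta_{1,n}^{-1}$ is connected to it by such moves (the fact you correctly cite from Garside theory), the property propagates to every $u$. The invariance check is local: one only needs to see that applying $\sigma_k^{-1}\sigma_{k+1}^{-1}\sigma_k^{-1}\leftrightarrow\sigma_{k+1}^{-1}\sigma_k^{-1}\sigma_{k+1}^{-1}$ somewhere in a word cannot destroy the existence of a far-commutativity-equivalent word carrying the block $\sigma_1^{-1}\cdots\sigma_{n-1}^{-1}$ with the prescribed prefix/suffix alphabets. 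This sidesteps any need to count letters or track a ``unique'' top generator.
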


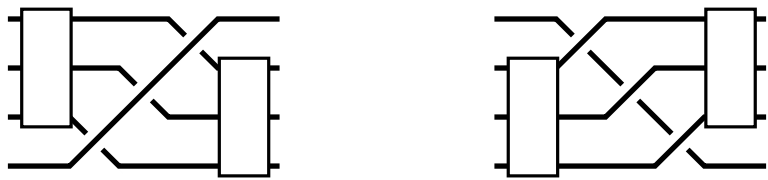
\begin{figure}[H]
\centering
\begin{tikzpicture}[rotate=90, scale=0.65, every node/.style={scale=0.65}]
\pic[
  rotate=90,
  line width=2pt,
  braid/control factor=0,
  braid/nudge factor=0,
  braid/gap=0.11,
  braid/number of strands = 4,
  name prefix=braid,
] at (0,0) {braid={
1 s_1 s_2 s_3 1
}};
\draw[draw=black, very thick, fill=white] (1-0.2,-0.28) rectangle ++(2.4,-1);
\draw[draw=black, very thick, fill=white] (-0.2,-4.28) rectangle ++(2.4,-1);
\end{tikzpicture} \hspace{2.5cm}
\begin{tikzpicture}[rotate=90, scale=0.65, every node/.style={scale=0.65}]
\pic[
  rotate=90,
  line width=2pt,
  braid/control factor=0,
  braid/nudge factor=0,
  braid/gap=0.11,
  braid/number of strands = 4,
  name prefix=braid,
] at (0,0) {braid={
1 s_3 s_2 s_1 1
}};
\draw[draw=black, very thick, fill=white] (-0.2,-0.28) rectangle ++(2.4,-1);
\draw[draw=black, very thick, fill=white] (1-0.2,-4.28) rectangle ++(2.4,-1);
\end{tikzpicture}
\caption{An illustration for Lemma \ref{Fragment}.}
\label{Illustration}
\end{figure}

\begin{proof}
We show that by using the moves corresponding to the far commutativity relations, one may transform $u$ into a braid word of the form $u_1 \sigma_1^{-1}\sigma_2^{-1}\ldots \sigma_{n-1}^{-1} u_2$. For the second braid word, the argument is similar.

First, the assertion of Lemma \ref{Fragment} holds for the braid word \eqref{DeltaWord1}.

Second, any two $r$-homogeneous braid words that represent the same braid are related by the diagram transformations corresponding to the braid relations and the far commutativity relations (see \cite[Theorem 9.2.5]{WPG}). 
It is easy to check that if one obtains a braid word $v_2$ from $v_1$ via a single braid relation move, the assertion holds for $v_1$ if and only if it holds for $v_2$. Similar holds for a single far commutativity relation move. Therefore, the assertion of Lemma \ref{Fragment} holds for $u$. The lemma is proved.
\end{proof}

Denote by $[w]$ the braid diagram corresponding to a braid word $w$. We refer to the vertical segments obtained by smoothing all crossings of 
$[w]$ as {\it Seifert segments} of~$[w]$. We order the Seifert segments from left to right.
Recall that, given $\a \in \A$, the symbol $\left\lVert \a \right\rVert$ denotes the number of strands of $[\p(\a)]$.

We are in the position of describing a specific leaf node $\U^\ast$ of $\T$. Note that for any leaf node $\V$ of $\T$, there is a unique path from $\D$ to $\V$. This path determines a sequence of diagram transformations leading from $\D$ to $\V$. Each of these transformations is either smoothing or flipping of a crossing that is the end of either a maximal descending or maximal ascending path. Therefore, to determine a specific leaf vertex of $\T$, one can travel naturally through $\D$ by moving along each link component and describe a sequence of diagram transformations of the above form. 

Let $(C_1,x_1)$ be the appropriate pair that one used in the first phase to construct~$\T$. Let us start at $x_1$ and move according to the orientation. Assume at the moment that $C_1$ intersects an arc of the template. 
Let $\a \in \A$ be the first arc such that one encountered the corresponding braid box of $\D$. 

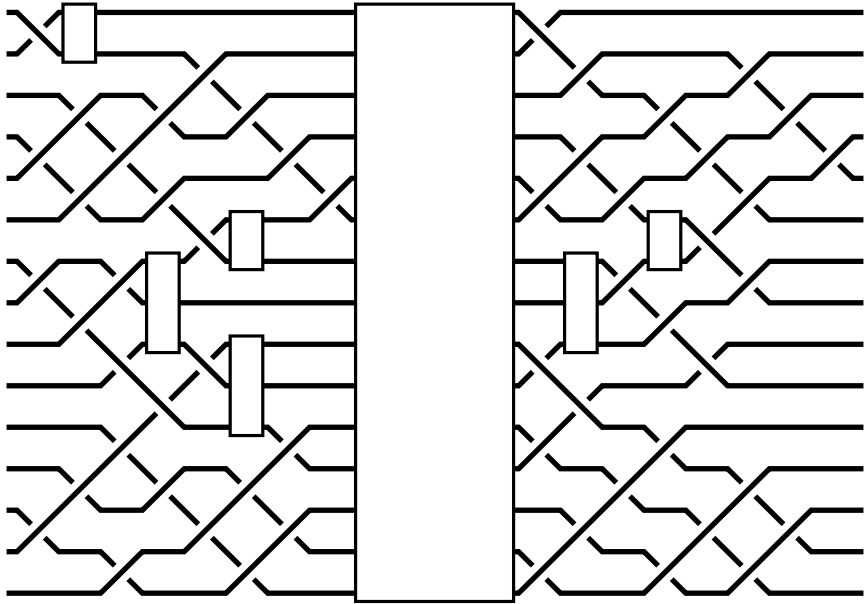
\begin{figure}
\centering
\begin{tikzpicture}[rotate=90,scale=0.55, every node/.style={scale=0.55}]
\pic[
  rotate=90,
  line width=2pt,
  braid/control factor=0,
  braid/nudge factor=0,
  braid/gap=0.11,
  braid/number of strands = 15,
  name prefix=braid,
] at (0,0) {braid={
s_2-s_8-s_{11}-s_{14}^{-1}
s_3-s_7-s_{10}-s_{12}
s_1-s_4-s_6^{-1}-s_8-s_{11}
s_3-s_5^{-1}-s_{10}-s_{12}
s_2-s_6^{-1}-s_9^{-1}-s_{13}
s_1-s_3-s_{12}
s_2-s_4-s_{11}
s_{10}
1 1 1 1 
s_1-s_4-s_6^{-1}-s_{10}-s_{14}^{-1}
s_2-s_5^{-1}-s_{11}-s_{13}
s_3-s_8-s_{10}
s_1-s_4-s_7-s_{12}
s_2-s_6^{-1}-s_9^{-1}-s_{11}
s_1-s_3-s_8-s_{10}-s_{13}
s_2-s_{12}
s_{11}
}};
\draw[draw=black, very thick, fill=white] (4-0.2,-5.35) rectangle ++(2.4,-0.78);
\draw[draw=black, very thick, fill=white] (6-0.2,-3.35) rectangle ++(2.4,-0.78);
\draw[draw=black, very thick, fill=white] (8-0.2,-5.35) rectangle ++(1.4,-0.78);
\draw[draw=black, very thick, fill=white] (13-0.2,-1.35) rectangle ++(1.4,-0.78);
\draw[draw=black, very thick, fill=white] (-0.2,-8.35) rectangle ++(14.4,-3.78);
\draw[draw=black, very thick, fill=white] (6-0.2,-13.35) rectangle ++(2.4,-0.78);
\draw[draw=black, very thick, fill=white] (8-0.2,-15.35) rectangle ++(1.4,-0.78);
\end{tikzpicture}
\caption{An example of a locally twisted braid diagram. The rectangles can be filled by arbitrary braid diagrams so that the resulting one is homogeneous.}
\label{ExampleD}
\end{figure}

\begin{spacing}{1.5}
\end{spacing}

{\it Case 1}. The first crossing one encounters is negative.
This crossing has the form either~$\sigma_1^{-1}$ or~$\sigma_{\left\lVert \a \right\rVert-1}^{-1}$ depending on the direction of $C_1$. Suppose $C_1$ is clockwise. Then the first crossing has the form $\sigma_1^{-1}$. Recall that in this case, one has to follow the descending rule. We have no choice but to keep the first crossing unchanged (and hence cross it). We continue traveling along the second Seifert segment of the braid diagram~$[\p(a)]$.

We assume at the moment that $i_2>2$. Recall that~$\p(\a)$ is locally $-$twisted. 
We wait for a crossing of the form $\sigma_2^{-1}$. 
The latter exists due to Lemma~\ref{Fragment}.
When such a crossing appears, we have no choice but to keep it unchanged. We continue traveling along the third Seifert segment of~$[\p(\a)]$ and wait for a crossing of the form~$\sigma_3^{-1}$. The latter exists due to Lemma~\ref{Fragment}. This~process continues until we cross a crossing of the form $\sigma_{i_2-1}^{-1}$.

Let $q$ be the last appearance
of the letter $\sigma_{i_2-1}^{-1}$ in~$\p(\a)$ such that $\sigma_{i_2-1}^{-1}\sigma_{i_2-2}^{-1}\ldots \sigma_{3}^{-1}\sigma_{2}^{-1}\sigma_{1}^{-1}$ is a subsequence of the suffix of~$\p(\a)$ that begins at $q$. The latter exists due to Lemma~\ref{Fragment}. We continue traveling along the Seifert segment of index $i_2$ of~$[\p(\a)]$. We smooth all crossings of the forms $\sigma_{i_2-1}^{-1}$ and $\sigma_{i_2}$ and wait for the crossing corresponding to $q$. When this crossing appears, we flip it and hence move to the Seifert segment of index $i_2-1$ of~$[\p(\a)]$.

Let $q^\prime$ be the last appearance
of the letter $\sigma_{i_2-2}^{-1}$ in~$\p(\a)$ such that $\sigma_{i_2-2}^{-1}\sigma_{i_2-3}^{-1}\ldots \sigma_{3}^{-1}\sigma_{2}^{-1}\sigma_{1}^{-1}$ is a subsequence of the suffix of~$\p(\a)$ that begins at $q^\prime$. The latter exists due to Lemma~\ref{Fragment}. We continue traveling along the Seifert segment of index $i_2-1$ of~$[\p(\a)]$. 
We
wait for the crossing corresponding to $q^\prime$. When this crossing appears, we flip it and hence move to the Seifert segment of index $i_2-2$ of~$[\p(\a)]$. This process continues until we flip a crossing of the form $\sigma_{1}^{-1}$ and hence move to the ending point of the first Seifert segment of~$[\p(\a)]$. The case $i_2=2$ is similar.

Roughly speaking, we extricate the first strand of the braid diagram.

If $C_1$ is counterclockwise, the construction is the same after one replaces the descending rule by the ascending one. The first case is complete.

\begin{spacing}{1.5}
\end{spacing}

{\it Case 2}. The first crossing one encounters is positive.
This crossing has the form either~$\sigma_1$ or~$\sigma_{\left\lVert \a \right\rVert-1}$ depending on the direction of $C_1$. Suppose~$C_1$ is clockwise. Then the first crossing has the form $\sigma_1$. Recall that in this case, one has to follow the descending rule. We smooth the first crossing and continue traveling along the first Seifert segment of~$[\p(\a)]$. By repeating this process, we arrive at the end of the first Seifert segment of~$[\p(\a)]$ by smoothing all the crossings of the form $\sigma_1$. 

If $C_1$ is counterclockwise, then one has to follow the ascending rule. We smooth all crossings of the form $\sigma_{\left\lVert \a \right\rVert-1}$ similarly. The second case is complete.

\begin{spacing}{1.5}
\end{spacing}

After the first arc $\a$ passed, we continue moving along $C_1$ according to the orientation. 
We follow the same rules as above and thus extend the path in $\T$ until the link component that contains $x_1$ is descending (resp.\ ascending) provided $C_1$ is clockwise (resp.\ counterclockwise).
Eventually, we obtain a leaf node~$\U_1^\ast$ of~$\T_1$. Recall that to find the second base point of~$\U_1^\ast$, we delete the link component~$\mathrm{LC}(\U_1^\ast; x_1)$ of~$\U_1^\ast$ containing~$x_1$. 
Let $(\C_1, \A_1)$ and~$\p_1$ be the template and the braid placement induced from $(\C,\A)$ and~$\p$, respectively, by 
the transformations that one applied in obtaining $\U_1^\ast$ from $\D$ and this link component deletion. Let $\U_1^\ast\backslash \mathrm{LC}(\U_1^\ast; x_1)$ be the link diagram determined by $\p_1$. Since the template~$(\C,\A)$ is knitted, $(\C_1, \A_1)$ is knitted too. 
By using Lemma \ref{Fragment}, it is easy to check that for all $\a \in \A_1$, the braid word $\p_1(\a)$ is locally $-$twisted. 
Therefore, we can repeat the same procedure for $\U_1^\ast\backslash \mathrm{LC}(\U_1^\ast; x_1)$ until the construction of~$\U^\ast$ terminates.

We are in the position of proving assertions {\rm(i)} and {\rm(ii)} concerning $\U^\ast$. 

On the one hand, by definition of coherent resolution trees, one visits each crossing of~$\D$ exactly twice (and hence at least once). On the other hand, by the definition of~$\U^\ast$, one smoothed all positive crossings one encountered. Therefore, one smoothed all positive crossings of $\D$ in obtaining~$\U^\ast$. Therefore, assertion {\rm(i)} holds. 

It remains to show that~$\U^\ast$ contributes to the highest $a$-degree term. By the construction of $\U^\ast$, one has~$\gamma(\U^\ast) = s(\D)$. 
Recall that the link components of any leaf node of $\T$ are stacked over each other. Thus, the contribution of crossings between any two distinct link components of $\U^\ast$ to the writhe~$\omega(\U)$ is zero. Note that each link component of $\U^\ast$ represents a simple closed curve on the plane. Therefore, one has $\omega(\U^\ast) = 0$. Hence,
\begin{align*}
\omega(\U^\ast) - \omega(\D) + \gamma(\U^\ast) - 1 = -\omega(\D) + s(\D)-1,    
\end{align*}
and thus assertion {\rm(ii)} holds. The first step is complete.

\begin{figure}
\centering
\begin{tikzpicture}[rotate=90,scale=0.55, every node/.style={scale=0.55}]
\pic[
  rotate=90,
  line width=2pt,
  braid/control factor=0,
  braid/nudge factor=0,
  braid/gap=0.11,
  braid/number of strands = 15,
  name prefix=braid,
] at (0,0) {braid={
s_2-s_8-s_{11}
s_3-s_7-s_{10}-s_{12}
s_1-s_4-s_8-s_{11}
s_3-s_{10}-s_{12}
s_2-s_{13}
s_1-s_3-s_{12}
s_2-s_4-s_{11}
s_{10}
1 1 1 1 
s_1^{-1}-s_4^{-1}-s_{10}^{-1}
s_2^{-1}-s_{11}^{-1}-s_{13}^{-1}
s_3^{-1}-s_8^{-1}-s_{10}^{-1}
s_1^{-1}-s_4^{-1}-s_7^{-1}-s_{12}^{-1}
s_2^{-1}-s_{11}^{-1}
s_1^{-1}-s_3^{-1}-s_8^{-1}-s_{10}^{-1}-s_{13}^{-1}
s_2^{-1}-s_{12}^{-1}
s_{11}^{-1}
}};
\draw[thick, densely dotted] (13.5,-0.7) circle (0.1cm);
\draw[thick, densely dotted] (13.5,-1.7) circle (0.1cm);
\draw[thick, densely dotted] (8.5,-4.7) circle (0.1cm);
\draw[thick, densely dotted] (8.5,-5.7) circle (0.1cm);
\draw[thick, densely dotted] (7.5,-3.7) circle (0.1cm);
\draw[thick, densely dotted] (6.5,-3.7) circle (0.1cm);
\draw[thick, densely dotted] (5.5,-2.7) circle (0.1cm);
\draw[thick, densely dotted] (4.5,-3.7) circle (0.1cm);
\draw[thick, densely dotted] (5.5,-4.7) circle (0.1cm);
\draw[thick, densely dotted] (5.5,-5.7) circle (0.1cm);
\draw[thick, densely dotted] (4.5,-5.7) circle (0.1cm);
\draw[thick, densely dotted] (0.5,-8.7) circle (0.1cm);
\draw[thick, densely dotted] (0.5,-9.7) circle (0.1cm);
\draw[thick, densely dotted] (0.5,-10.7) circle (0.1cm);
\draw[thick, densely dotted] (0.5,-11.7) circle (0.1cm);
\draw[thick, densely dotted] (1.5,-8.7) circle (0.1cm);
\draw[thick, densely dotted] (1.5,-9.7) circle (0.1cm);
\draw[thick, densely dotted] (1.5,-10.7) circle (0.1cm);
\draw[thick, densely dotted] (1.5,-11.7) circle (0.1cm);
\draw[thick, densely dotted] (2.5,-8.7) circle (0.1cm);
\draw[thick, densely dotted] (2.5,-9.7) circle (0.1cm);
\draw[thick, densely dotted] (2.5,-10.7) circle (0.1cm);
\draw[thick, densely dotted] (2.5,-11.7) circle (0.1cm);
\draw[thick, densely dotted] (3.5,-8.7) circle (0.1cm);
\draw[thick, densely dotted] (3.5,-9.7) circle (0.1cm);
\draw[thick, densely dotted] (3.5,-10.7) circle (0.1cm);
\draw[thick, densely dotted] (3.5,-11.7) circle (0.1cm);
\draw[thick, densely dotted] (4.5,-8.7) circle (0.1cm);
\draw[thick, densely dotted] (4.5,-9.7) circle (0.1cm);
\draw[thick, densely dotted] (4.5,-10.7) circle (0.1cm);
\draw[thick, densely dotted] (4.5,-11.7) circle (0.1cm);
\draw[thick, densely dotted] (5.5,-8.7) circle (0.1cm);
\draw[thick, densely dotted] (5.5,-9.7) circle (0.1cm);
\draw[thick, densely dotted] (5.5,-10.7) circle (0.1cm);
\draw[thick, densely dotted] (5.5,-11.7) circle (0.1cm);
\draw[thick, densely dotted] (6.5,-8.7) circle (0.1cm);
\draw[thick, densely dotted] (6.5,-9.7) circle (0.1cm);
\draw[thick, densely dotted] (6.5,-10.7) circle (0.1cm);
\draw[thick, densely dotted] (6.5,-11.7) circle (0.1cm);
\draw[thick, densely dotted] (7.5,-8.7) circle (0.1cm);
\draw[thick, densely dotted] (7.5,-9.7) circle (0.1cm);
\draw[thick, densely dotted] (7.5,-10.7) circle (0.1cm);
\draw[thick, densely dotted] (7.5,-11.7) circle (0.1cm);
\draw[thick, densely dotted] (8.5,-8.7) circle (0.1cm);
\draw[thick, densely dotted] (8.5,-9.7) circle (0.1cm);
\draw[thick, densely dotted] (8.5,-10.7) circle (0.1cm);
\draw[thick, densely dotted] (8.5,-11.7) circle (0.1cm);
\draw[thick, densely dotted] (9.5,-8.7) circle (0.1cm);
\draw[thick, densely dotted] (9.5,-9.7) circle (0.1cm);
\draw[thick, densely dotted] (9.5,-10.7) circle (0.1cm);
\draw[thick, densely dotted] (9.5,-11.7) circle (0.1cm);
\draw[thick, densely dotted] (10.5,-8.7) circle (0.1cm);
\draw[thick, densely dotted] (10.5,-9.7) circle (0.1cm);
\draw[thick, densely dotted] (10.5,-10.7) circle (0.1cm);
\draw[thick, densely dotted] (10.5,-11.7) circle (0.1cm);
\draw[thick, densely dotted] (11.5,-8.7) circle (0.1cm);
\draw[thick, densely dotted] (11.5,-9.7) circle (0.1cm);
\draw[thick, densely dotted] (11.5,-10.7) circle (0.1cm);
\draw[thick, densely dotted] (11.5,-11.7) circle (0.1cm);
\draw[thick, densely dotted] (12.5,-8.7) circle (0.1cm);
\draw[thick, densely dotted] (12.5,-9.7) circle (0.1cm);
\draw[thick, densely dotted] (12.5,-10.7) circle (0.1cm);
\draw[thick, densely dotted] (12.5,-11.7) circle (0.1cm);
\draw[thick, densely dotted] (13.5,-8.7) circle (0.1cm);
\draw[thick, densely dotted] (13.5,-9.7) circle (0.1cm);
\draw[thick, densely dotted] (13.5,-10.7) circle (0.1cm);
\draw[thick, densely dotted] (13.5,-11.7) circle (0.1cm);
\draw[thick, densely dotted] (13.5,-12.7) circle (0.1cm);
\draw[thick, densely dotted] (8.5,-15.7) circle (0.1cm);
\draw[thick, densely dotted] (8.5,-16.7) circle (0.1cm);
\draw[thick, densely dotted] (7.5,-13.7) circle (0.1cm);
\draw[thick, densely dotted] (6.5,-13.7) circle (0.1cm);
\draw[thick, densely dotted] (5.5,-12.7) circle (0.1cm);
\draw[thick, densely dotted] (4.5,-13.7) circle (0.1cm);
\draw[thick, densely dotted] (5.5,-16.7) circle (0.1cm);
\end{tikzpicture}
\caption{The braid diagram obtained from those in Figure \ref{ExampleD} in the construction of $\U^\ast$.}
\label{ExampleD1}
\end{figure}
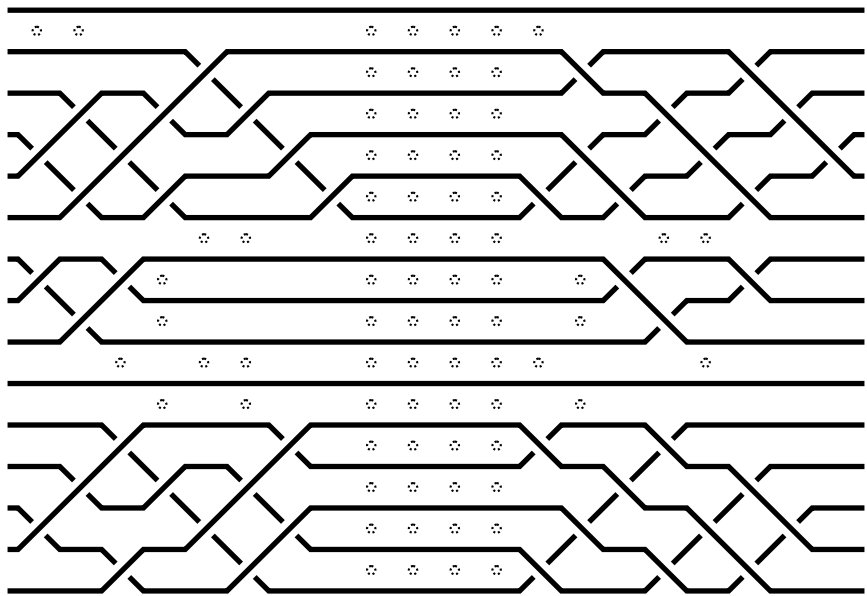

\subsection{Second step}

Let $\U$ be a leaf node of $\T$. We aim to prove that if $\U$ contributes to the highest $a$-degree term, then:
\begin{enumerate}
\item[{\rm(i)}] one has $\gamma(\U) = s(\D)$ and $\omega(\U) = 0$;
\item[{\rm(ii)}] inequality $t^-(\U) \leq t^-(\U^\ast)$ holds.
\end{enumerate}
Let $P$ be a unique path from the root of~$\T$ to~$\U$. Given~$i \in \{0,1,\ldots,\gamma(\U)\}$, let~$\U_i$ be a unique leaf of $P \cap \T_i$. In particular,~$\U_0 = \D$ and~$\U_{\gamma(\U)} = \U$. Recall that each~$\U_i \subseteq \mathbb{R}^2$ contains~$i$ base points, which we denote by $x_1, x_2, \ldots, x_{i}$. 

Let $i \in \{0,1,\ldots,\gamma(\U)\}$. Recall that $\mathrm{LC}(\U_i; x_i)$ denotes the link component of~$\U_i$ that contains $x_i$. This link component determines a curve on the plane $\mathbb{R}^2$. By the construction of~$\T$, to obtain~$\U_i$, one smooths all crossings of~$\U_{i-1}$ incident to~$\mathrm{LC}(\U_i; x_i)$ and then flips several crossings that~$\mathrm{LC}(\U_i;x_i)$ intersects. Let 
\begin{align*}
\U^{i-1} := \U_{i-1} \backslash (\mathrm{LC}(\U_{i-1};x_1) \cup \mathrm{LC}(\U_{i-1};x_2) \cup \ldots \cup \mathrm{LC}(\U_{i-1};x_{i-1}))    
\end{align*}
be the link diagram obtained from~$\U_{i-1}$ by deleting each of the link components~$\mathrm{LC}(\U_{i-1};x_1), \mathrm{LC}(\U_{i-1};x_2), \ldots, \mathrm{LC}(\U_{i-1};x_{i-1})$. 

Let~$(C_i, x_i)$ be the appropriate pair of the diagram $\U^{i-1}$
that one uses in the construction of $\U_{i}$. 
Let $t$ be the maximal segment of the plane curve determined by the link component~$\mathrm{LC}(\U_i; x_i)$ such that $t$ contains within the castle ${\rm Cas}(C_i, x_i)$ of $\U^{i-1}$ and the starting point of $t$ is $x_i$. Let $L_i$ be the level of a floor of ${\rm Cas}(C_i, x_i)$ whose ending point coincides with the ending point of $t$.

Lemma \ref{Ground1} and Corollary \ref{Ground2} below are Lemma~5.2 and Corollary~5.3, respectively, in~\cite{BIRAL}.

\begin{Lemma}\label{Ground1}
If the leaf node $\U$ contributes to the highest~$a$-degree term, then for each index~$i \in \{1,2,\ldots,\gamma(\U)\}$, one has~$L_i=0$.
\end{Lemma}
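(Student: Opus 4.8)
The plan is to argue by contradiction. Suppose the leaf node $\U$ contributes to the highest $a$-degree term, and suppose that $L_j \geq 1$ for some index $j$. I would choose $j$ to be the \emph{first} such index (so that $L_1 = L_2 = \ldots = L_{j-1} = 0$, which will let me treat the earlier phases as harmless). The key quantity to track is the left-hand side of \eqref{ThisImpliesMFW}, namely $\omega(\U) - \omega(\D) + \gamma(\U) - 1$, which by assumption equals $-\omega(\D) + s(\D) - 1$; equivalently $\omega(\U) + \gamma(\U) = s(\D)$. Since $\gamma(\U) \leq s(\D)$ always, and $\omega(\U)$ of a leaf node is a sum of writhes of its individual (simple closed, hence unknotted) components plus a cancelling inter-component part, one expects $\omega(\U) = 0$ and $\gamma(\U) = s(\D)$ \emph{precisely when} no crossing-smoothings were ``wasted,'' i.e.\ when the branching process never had to smooth a crossing that would otherwise have kept two Seifert circles from merging into one component.

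The heart of the matter is the following local analysis at phase $i$. When we build $\mathrm{LC}(\U_i; x_i)$ by travelling along the maximal descending (or ascending) path in the castle ${\rm Cas}(C_i, x_i)$, the path $t$ climbs to some floor of level $L_i$ and then closes up. If $L_i = 0$, the component $\mathrm{LC}(\U_i; x_i)$ stays on the single Seifert circle $C_i$ and contributes exactly one to $\gamma$ per Seifert circle consumed; if $L_i \geq 1$, then to close the path up we were forced to \emph{flip} at least one ladder crossing of the castle that connects $C_i$ to a higher floor, and — because the castle is appropriate (no traps, by Lemma~\ref{TrappedCastle} and the construction of special coherent trees) and because $\Gamma(\D)$ is bipartite — this flip, together with the smoothings performed, strictly lowers the eventual value of $\gamma(\U) + \omega(\U)$ below $s(\D)$. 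Concretely, I would compare the component $\mathrm{LC}(\U_i; x_i)$ produced in the actual tree with the ``ground-level'' component that would have been produced had $L_i$ been $0$: the difference is a nontrivial loop around a floor inside the castle, and one shows by a Seifert-circle count (essentially an Euler-characteristic / first-Betti-number bookkeeping on the relevant piece of $\Gamma(\D)$, using that the castle maps to a tree after identifying parallel edges) that this costs at least one unit in $\gamma(\U) + \omega(\U)$, contradicting $\gamma(\U) + \omega(\U) = s(\D)$.

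I expect the main obstacle to be making the ``costs at least one unit'' step fully rigorous, since $\gamma(\U)$ and $\omega(\U)$ are determined only at the very end of the branching process, several phases after phase $i$, and a naive local argument could be spoiled by interactions with later phases. The clean way around this is to work with the \emph{template and braid placement induced on} $\U^{i-1}$ (Definition~\ref{InducedTemplate}): phases $i, i+1, \ldots, \gamma(\U)$ of $\T$ restrict to a special coherent resolution tree for $\U^{i-1}$, and inequality \eqref{ThisImpliesMFW} applied to $\U^{i-1}$ shows that its leaf $\U$ satisfies $\gamma(\U) + \omega(\U) \leq s(\U^{i-1})$, with equality forced by our assumption. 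This lets me induct on the number of phases: it suffices to prove that \emph{at the first phase}, $L_1 \geq 1$ already forces $s(\U^{0}\setminus \mathrm{LC}) < s(\D) - 1$ — i.e.\ deleting the first component removes \emph{more than} one Seifert circle's worth of ``room'' — which is a statement purely about the diagram $\D$, its innermost Seifert circle $C_1$, and the trap-free castle ${\rm Cas}(C_1, x_1)$, and which follows from the appropriateness hypothesis by tracing how many Seifert circles of $\D$ get amalgamated when the climbing path at level $\geq 1$ closes. I would also need the easy observation, which I would record first, that $L_i = 0$ for all $i$ conversely \emph{does} yield $\gamma(\U) = s(\D)$ and $\omega(\U) = 0$ — this is exactly the computation already carried out for $\U^\ast$ in the first step, so only the ``only if'' direction genuinely requires the trap-free castle machinery.
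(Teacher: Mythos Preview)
First, note that the paper does \emph{not} provide its own proof of this lemma: immediately before the statement it says ``Lemma~\ref{Ground1} and Corollary~\ref{Ground2} below are Lemma~5.2 and Corollary~5.3, respectively, in~\cite{BIRAL}.'' So there is no in-paper argument to compare against; the proof lives entirely in \cite{BIRAL}. Your proposal is therefore an attempt to reconstruct that external argument.

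Your overall strategy --- reduce sharpness of \eqref{ThisImpliesMFW} to $\omega(\U)+\gamma(\U)=s(\D)$ and show that any phase with $L_i\geq 1$ forces a strict drop --- is the right framework, and your idea of peeling off the first $j-1$ phases (where $L_1=\cdots=L_{j-1}=0$) is natural. But two steps in the proposal are genuine gaps rather than routine details.

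\textbf{(1) The inductive restriction is not clean.} You assert that phases $i,i+1,\ldots$ of $\T$ ``restrict to a special coherent resolution tree for $\U^{i-1}$'' so that \eqref{ThisImpliesMFW} can be invoked on that sub-diagram. But phases $i,i+1,\ldots$ act on $\U_{i-1}$, not on $\U^{i-1}$: during the travel in phase $i$ one encounters crossings between the current strand and the already-frozen components $\mathrm{LC}(\U_{i-1};x_1),\ldots,\mathrm{LC}(\U_{i-1};x_{i-1})$, and the rule may flip or smooth such crossings. So the branching process on $\U_{i-1}$ does not literally project to one on $\U^{i-1}$, and you cannot directly quote \eqref{ThisImpliesMFW} for the deleted diagram without further argument. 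One must instead track the contribution of those mixed crossings to $\omega$ explicitly.

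\textbf{(2) The Seifert-circle count is the wrong invariant at the base of the induction.} Your base case is stated as ``$L_1\geq 1$ forces $s(\U^{0}\setminus\mathrm{LC})<s(\D)-1$,'' i.e.\ deleting the first component kills more than one Seifert circle. But the quantity that must drop is $\omega(\U)+\gamma(\U)$, and the self-writhe of $\mathrm{LC}(\U_1;x_1)$ enters here on equal footing with the Seifert-circle count. A component with $L_1\geq 1$ can very well satisfy $s(\U^1)=s(\D)-1$ while having nonzero self-writhe; conversely, it can have $s(\U^1)<s(\D)-1$ with compensating positive self-writhe. Your formulation drops the writhe term entirely. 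The argument in \cite{BIRAL} instead analyzes, phase by phase, how the descending/ascending rule together with the \emph{trap-free} castle structure constrains the signs of the ladder crossings the path crosses when climbing levels; this yields a direct inequality $\omega(\mathrm{LC}(\U_i;x_i))\leq -L_i$ (in the appropriate orientation convention), from which $L_i\geq 1$ immediately violates $\omega(\U)+\gamma(\U)=s(\D)$. That sign control on ladder crossings --- which is exactly where appropriateness is used --- is the missing ingredient in your sketch.
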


\begin{Corollary}\label{Ground2}
If the leaf node $\U$ contributes to the highest~$a$-degree term, then each link component of $\U$ represents a simple closed curve on 
the plane.
\end{Corollary}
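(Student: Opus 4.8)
The plan is to deduce the statement from Lemma~\ref{Ground1} by a geometric analysis of the castles used to build the special $\X$-coherent resolution tree $\T$. Fix an index $i\in\{1,2,\ldots,\gamma(\U)\}$ and write $K_i:=\mathrm{LC}(\U;x_i)$ for the corresponding link component of $\U$; the goal is to show that $K_i$ has no self-crossing. First I would observe that the underlying plane curve of $K_i$ is already determined at the end of phase~$i$, namely as $\mathrm{LC}(\U_i;x_i)$: in every later phase $j>i$ one resolves the induced diagram $\U^{j-1}$, from which the component containing $x_i$ has been deleted, so that component is never touched again, and the only remaining freedom along it — flipping crossings — does not affect the underlying plane curve. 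Hence it suffices to prove that $\mathrm{LC}(\U_i;x_i)$ is an embedded circle.

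The heart of the matter, where Lemma~\ref{Ground1} enters, is next. By that lemma $L_i=0$, so the maximal initial segment $t$ of the curve $\mathrm{LC}(\U_i;x_i)$ that is contained in the castle ${\rm Cas}(C_i,x_i)$ of $\U^{i-1}$ both starts and ends on the level-$0$ floor, that is, at the two endpoints of the segment of $C_i$ carrying all crossings incident to $C_i$. I would then analyse $t$ excursion by excursion: whenever $\mathrm{LC}(\U_i;x_i)$ leaves $C_i$ it enters floors of positive level, and since $L_i=0$ every such excursion must eventually descend back to level~$0$. Using that ${\rm Cas}(C_i,x_i)$ is appropriate (trap-free) and that phase~$i$ follows the descending rule precisely when $C_i$ is clockwise, I would argue that such a returning excursion is forced to be of the ``extricate a strand'' type already seen in the construction of $\U^\ast$ in the first step, so that all of its internal crossings are smoothed rather than flipped; consequently no self-crossing of $K_i$ survives on $t$, and $t$ is embedded. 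The part of $\mathrm{LC}(\U_i;x_i)$ lying outside ${\rm Cas}(C_i,x_i)$ is the continuation of phase~$i$ applied to the smaller diagram obtained by deleting $C_i$ together with its component, which is again determined by a knitted template and an induced braid placement and to which Lemma~\ref{Ground1} applies verbatim; an induction on the number of Seifert circles then shows that the whole curve $\mathrm{LC}(\U_i;x_i)$, hence $K_i$, is embedded. Carrying this out for every $i$ proves the statement.

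The step I expect to be the main obstacle is exactly this bridge from the local datum ``$L_i=0$'' — information about a single floor at which one initial segment terminates — to the global conclusion that the entire component, which threads through many braid boxes and many Seifert circles, carries no self-crossing. This requires tracking carefully how the descending/ascending rule interacts with the castle structure inside and around the braid boxes, checking that ``$L_i=0$'' genuinely excludes every configuration in which an excursion onto a higher floor leaves behind an unsmoothed self-crossing, and verifying that the recursive passage to the induced template in the middle of a phase preserves the relevant hypotheses (knittedness of the template and appropriateness of the castle used next, the latter supplied by Lemma~\ref{TrappedCastle}). A secondary point needing an explicit statement is the claim, used above, that the phases $j>i$ resolve the induced diagram $\U^{j-1}$ and therefore do not alter the plane curve of $K_i$.
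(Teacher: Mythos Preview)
The paper does not prove this statement; it records that Lemma~\ref{Ground1} and Corollary~\ref{Ground2} are Lemma~5.2 and Corollary~5.3 of~\cite{BIRAL} and cites them without argument. So there is no in-paper proof to compare your proposal against, and what follows is an assessment of whether your outline would succeed.

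Your reduction step is fine: later phases act on the induced diagram with $\mathrm{LC}(\U_j;x_i)$ removed, so the underlying plane curve of $K_i$ is indeed fixed from phase~$i$ onward. The difficulty is exactly where you locate it, but your proposed mechanism for bridging it does not work. You assert that an excursion of $t$ above level~$0$ which returns to level~$0$ is ``forced to be of the `extricate a strand' type\ldots so that all of its internal crossings are smoothed rather than flipped.'' Both halves are false. Even for the distinguished leaf $\U^\ast$, the Case~1 construction in the paper's first step \emph{keeps} the $\sigma_k^{-1}$ crossings while climbing, smooths some on the top segment, and \emph{flips} the $\sigma_k^{-1}$ crossings while descending; so ``extricate a strand'' is not a pattern in which every internal crossing is smoothed, and the component of $\U^\ast$ genuinely carries surviving crossings. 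For a general leaf $\U$ with $L_i=0$ there is moreover nothing tying the excursion pattern to the specific one used for~$\U^\ast$; the only constraint is that the curve exits the castle on the level-$0$ floor, and that is compatible with many different sequences of keeps, flips and smooths. Embeddedness of $t$ therefore cannot come from ``no crossings remain on the excursion''; one has to show that none of the surviving crossings on $t$ is a \emph{self}-crossing, and that is precisely what the trap-free hypothesis buys in~\cite{BIRAL}, via an argument your outline does not supply.

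Your inductive step is also off target. When $L_i=0$, the curve exits the castle at the endpoint~$q_0$ of the level-$0$ floor; since $C_i$ is innermost and all crossings incident to it lie inside the castle, the portion of $\mathrm{LC}(\U_i;x_i)$ outside the castle is just the crossing-free arc of $C_i$ from $q_0$ back to~$x_i$. There is no ``continuation of phase~$i$ on a smaller diagram obtained by deleting~$C_i$'' to induct on; the entire content lives inside the castle, and any recursion has to run over the levels of that castle rather than over a shrunken template.
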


We are in the position of proving assertion {\rm(i)}.

\begin{Corollary}
If the leaf node $\U$ contributes to the highest~$a$-degree term, then one has~$\gamma(\U) = s(\D)$ and $\omega(\U) = 0$.
\end{Corollary}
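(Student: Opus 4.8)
The plan is to reduce the whole statement to the single fact $\omega(\U)=0$: once this is established, the desired identity $\gamma(\U)=s(\D)$ drops out of the sharpness of \eqref{ThisImpliesMFW} by pure arithmetic, since a leaf node contributes to the highest $a$-degree term precisely when $\omega(\U)-\omega(\D)+\gamma(\U)-1 = -\omega(\D)+s(\D)-1$, that is (after cancelling $\omega(\D)$) when $\omega(\U)+\gamma(\U)=s(\D)$. So the real content is the vanishing of the writhe of $\U$.

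To prove $\omega(\U)=0$ I would split $\omega(\U)$ into the sum of the self-writhes of the individual link components of $\U$ and the sum of the contributions coming from crossings between two distinct components. For the self-writhes I would invoke Corollary~\ref{Ground2}: when $\U$ contributes to the highest $a$-degree term, each link component of $\U$ is a simple closed curve in $\mathbb{R}^2$, hence has no self-crossings at all, so every self-writhe is $0$. For the contributions between distinct components I would use that $\U$, being a leaf node of a coherent resolution tree, represents an unlink; in particular every $2$-component sublink of $\U$ is a trivial $2$-component link, so the linking number of any two components of $\U$ vanishes, and since the signed count of the crossings between two fixed components equals twice that linking number, this signed count is $0$. (Equivalently, the components of $\U$ are stacked one above another, so the over/under information at each inter-component crossing is constant along a fixed pair of components, and the resulting signed count is simply the algebraic intersection number of two closed curves in the plane, which is $0$.) Summing over all components and all pairs gives $\omega(\U)=0$.

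It then remains only to close the loop: since $\U$ contributes to the highest $a$-degree term, inequality \eqref{ThisImpliesMFW} holds with equality, which after cancelling $\omega(\D)$ reads $\omega(\U)+\gamma(\U)-1=s(\D)-1$; substituting $\omega(\U)=0$ yields $\gamma(\U)=s(\D)$, as desired.

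I do not expect a genuine obstacle here. The only non-elementary ingredient is Corollary~\ref{Ground2} (equivalently Lemma~\ref{Ground1}), which is taken over from \cite{BIRAL} and which we may use freely; granting it, the corollary is a bookkeeping computation with the writhe. The one point worth stating carefully is that the vanishing of $\omega(\U)$ rests on two separate facts pulling in different directions — the simple-closed-curve property of Corollary~\ref{Ground2} kills the self-writhes, whereas the fact that leaf nodes represent unlinks kills the inter-component contributions — and neither fact alone suffices.
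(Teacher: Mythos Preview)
Your argument is correct. The treatment of $\omega(\U)=0$ matches the paper's exactly: Corollary~\ref{Ground2} kills the self-writhes, and the stacking of components (equivalently, the vanishing of pairwise linking numbers in an unlink) kills the inter-component contributions.

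Where you differ is in deducing $\gamma(\U)=s(\D)$. The paper obtains this directly from Lemma~\ref{Ground1}: since each $L_i=0$, deleting the $i$-th link component drops the number of Seifert circles by exactly one, i.e.\ $s(\U^i)=s(\U^{i-1})-1$, and iterating gives $\gamma(\U)=s(\D)$. You instead invert the logic: first establish $\omega(\U)=0$, then read $\gamma(\U)=s(\D)$ off the sharpness equation $\omega(\U)+\gamma(\U)=s(\D)$. Your route is slightly more economical in that it avoids spelling out the implication ``$L_i=0 \Rightarrow s(\U^i)=s(\U^{i-1})-1$'' (which, while true, is not entirely immediate from the definitions of the castle and of $L_i$); it uses only Corollary~\ref{Ground2} as a black box. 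The paper's route, on the other hand, makes the geometric reason for $\gamma(\U)=s(\D)$ explicit and does not rely on the definition of ``contributes to the highest $a$-degree term'' being phrased as an equality. Both are perfectly valid.
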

\begin{proof}
Lemma \ref{Ground1} implies that for each $i \in \{1,2,\ldots,\gamma(\U)\}$, one has $s(\U^i) = s(\U^{i-1}) - 1.$
Therefore, the number of link components of $\U$ is $s(\D)$. 

By the construction of $\X$-coherent resolution trees, the link components of $\U$ are stacked over each other. Thus, the contribution of crossings between any two distinct link components of $\U$ to the writhe $\omega(\U)$ is zero. Thus, Corollary~\ref{Ground2} implies $\omega(\U) = 0$. The corollary is proved.
\end{proof}

We are in the position of proving assertion {\rm(ii)}. Suppose the leaf node $\U$ of $\T$ contributes to the highest $a$-degree term. Recall that $(\C, \A)$ and $\p$ denote the knitted template and the braid placement, respectively, such that the link diagram determined by $\p$ is $\D$. We need the following auxiliary result.

\begin{Lemma}\label{MainLemma}
Let $i \in \{1,2,\ldots,\gamma(\U)\}$ and $\a \in \A$. 
Suppose the link component $\mathrm{LC}(\U; x_i)$ of $\U$ contributes a strand~$s(\a,i)$ to the braid diagram $[\p(\a)]$.
Then the boundary points of~$s(\a,i)$ lie on the same Seifert segment of~$[\p(\a)]$.
\end{Lemma}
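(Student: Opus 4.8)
The plan is to argue by induction on the phase index $i$, tracing the link component $\mathrm{LC}(\U;x_i)$ through the braid boxes it meets and using local $-$twistedness together with the resolution rules to pin down how it enters and leaves each box. Since throughout this subsection we assume that $\U$ contributes to the highest $a$-degree term, Lemma~\ref{Ground1} and Corollary~\ref{Ground2} are available; in particular $\mathrm{LC}(\U;x_i)$ is an embedded closed curve of the plane, and $L_i=0$ in the castle ${\rm Cas}(C_i,x_i)$. Recall also that, inside a braid box, oriented smoothings and flips preserve the monotonicity of the braid diagram and never create a U-turn, so each passage of $\mathrm{LC}(\U;x_i)$ through a braid box $\a$ is a monotone arc $s(\a,i)$ from a top port to a bottom port.

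The inductive step is a reduction. For $i\geq 2$, replace $\D$, $(\C,\A)$, $\p$ by the diagram $\U^{i-1}$ together with the induced template $(\C_{i-1},\A_{i-1})$ and braid placement $\p_{i-1}$ obtained by deleting the link components visited in phases $1,\dots,i-1$ (Definition~\ref{InducedTemplate}). Exactly as in the first step, Lemma~\ref{Fragment} shows that deleting a strand from a locally $-$twisted braid word again yields a locally $-$twisted one, and knittedness is inherited, so $(\C_{i-1},\A_{i-1},\p_{i-1})$ satisfies all the hypotheses of Theorem~\ref{Theorem2}; moreover this deletion only removes some Seifert segments of each $[\p(\a)]$ and preserves the left-to-right order of the remaining ones, so the first-phase conclusion for $\U^{i-1}$ translates back into the conclusion of the lemma for $\D$. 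It thus suffices to treat $i=1$.

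For $i=1$, I would trace one passage $s(\a,1)$ of $\mathrm{LC}(\U;x_1)$ through a braid box $\a$, entering at a top port in position $k_1$ and leaving at a bottom port in position $k_2$. Then $k_1-k_2$ equals the signed number of unsmoothed crossings the arc runs through, and I would show that the net horizontal displacement of the arc over each uniform layer of $\p(\a)$ is zero, so that $k_1=k_2$ and the two boundary points of $s(\a,1)$ lie on the same Seifert segment of $[\p(\a)]$. For a positive uniform layer, the descending/ascending rule forces $\mathrm{LC}(\U;x_1)$ to smooth every crossing it meets in that layer, exactly as in Case~2 of the first step, giving displacement $0$. For a negative uniform layer, Lemma~\ref{Fragment} exhibits an occurrence of the half-twist $\Delta^{-1}$ at each end of the layer; the rule then escorts the arc away from the side it entered on and, once the terminal $\Delta^{-1}$-fragment is reached, escorts it straight back, exactly as in Case~1 of the first step, again giving displacement $0$. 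The knitted hypothesis guarantees that distinct passages of a single component through $\a$ use disjoint sets of crossings, so each passage is handled by the same argument independently.

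The step I expect to be the real obstacle is justifying that, for a leaf $\U$ contributing to the highest $a$-degree term, the ``balanced'' smooth/flip choices above are the only admissible ones: a general leaf of $\T$ certainly admits unbalanced choices that produce a genuine shift $k_1\neq k_2$. Here the argument must be that any such shift forces $\mathrm{LC}(\U;x_i)$ either onto a floor of positive level of ${\rm Cas}(C_i,x_i)$, contradicting $L_i=0$ from Lemma~\ref{Ground1}, or into a self-intersection when it closes up, contradicting Corollary~\ref{Ground2}. The knitted hypothesis enters once more: the ports $k_1$ and $k_2$ of $\a$ are glued to Seifert circles whose only crossings with $\mathrm{LC}(\U;x_i)$ lie inside $\a$, so a residual shift cannot be absorbed in another braid box and genuinely alters the level structure of the castle. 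Carrying this out carefully, keeping track of the innermost circle $C_i$ sitting at an extreme port of every box it visits and of how earlier deletions may merge several template circles into $\mathrm{LC}(\U;x_i)$, is where the bulk of the verification lies.
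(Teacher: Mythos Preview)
Your inductive reduction from $i\geq 2$ to $i=1$ via the induced template is fine and matches the paper. The difficulty is your treatment of $i=1$.

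The layer-by-layer displacement analysis in your third paragraph is not a property of a general leaf $\U$; it is a description of the \emph{specific} leaf $\U^\ast$ built in the first step. For an arbitrary leaf contributing to the highest $a$-degree term, nothing forces the arc to smooth every positive crossing it meets, nor to follow the ``escort out and back'' pattern through a negative layer. You recognise this in your final paragraph, but the resolution you sketch there is both vague and missing the key ingredient: you never invoke the \emph{appropriateness} of the castle ${\rm Cas}(C_1,x_1)$ (the no-traps condition), and without it the argument does not close. Knittedness and $L_1=0$ alone are not enough to exclude a shift $k_1\neq k_2$.

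The paper's proof avoids any analysis inside the braid box and does not use local $-$twistedness or Lemma~\ref{Fragment} at all. It argues directly from the castle: since $C_1$ is innermost, the entry port of the first box $\a_1$ is position~$1$; if the exit port were $d>1$, then by $L_1=0$ the path must return to the level-$0$ floor $F_0$ before reaching $q_0$, forcing the level-$1$ floor $F_1$ (the Seifert circle at position~$2$ of $\a_1$) to share a crossing with $F_0$ outside the box $\a_1$. That second contact either lies beyond an intermediate box, creating a trap and contradicting appropriateness, or lies in the very next box, giving two arcs between the same pair of circles and contradicting knittedness. This is short and uses only structural data about $(\C,\A)$; your attempt to reconstruct the strand's trajectory through each uniform layer is unnecessary and, as written, circular.
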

\begin{proof}
First, suppose $i = 1$. Let~$(C_1, x_1)$ be the appropriate pair of the diagram $\U^0 = \D$ that one uses in the construction of $\U_{1}$. Starting at $x_1$ and following the orientation of $C_1$, let us order the arcs in $\A$ intersecting $C_1$ as follows: $\a_1, \a_2, \ldots, \a_r$. 

Let $d \in \{1,2,\ldots,\left\lVert \a_1 \right\rVert\}$ be the index of the ending point of $s(\a_1,1)$. We claim that~$d = 1$. 

Assume the converse, that is, $d_1>1$. 
Let $F_0$ be a unique floor of level~$0$ of the castle~${\rm Cas}(C_1, x_1)$, and let $p_0 = x_1$ and $q_0$ be the boundary points of $F_0$ (see Figure \ref{TrappedCase}).
Let~$F_1$ be the floor of level~$1$ of~${\rm Cas}(C_1, x_1)$ that contains the second Seifert segment of the braid diagram~$[\p(\a_1)]$.

\begin{figure}[H]
\centering
\begin{tikzpicture}[scale=0.95, every node/.style={scale=0.95}]
\draw[very thick,draw=red] (0,0) -- (1,0);
\draw[very thick] (1,0) -- (14,0);
\draw[very thick,draw=red] (14,0) -- (15,0);
\filldraw[red] (0.3,0) circle (1pt) node[anchor=south] {$p_0$};
\filldraw[red] (14.7,0) circle (1pt) node[anchor=south] {$q_0$};
\draw[very thick] (1-0.2,0.5) -- (2+0.2,0.5);
\draw[very thick] (1-0.2,1) -- (2+0.2,1);
\draw[very thick] (1-0.2,1.5) -- (2-0.2,1.5);
\draw[very thick,draw=red] (2-0.2,1.5) -- (2+0.2,1.5);
\draw[very thick] (1-0.2,2) -- (2+0.2,2);
\draw[draw=black, very thick, fill=white] (1,-0.2) rectangle ++(1,2.4);
\draw[very thick] (4-0.2,0.5) -- (5+0.2,0.5);
\draw[draw=black, very thick, fill=white] (4,-0.2) rectangle ++(1,0.9);
\draw[very thick] (7-0.2,0.5) -- (8+0.2,0.5);
\draw[very thick] (7-0.2,1) -- (8+0.2,1);
\draw[draw=black, very thick, fill=white] (7,-0.2) rectangle ++(1,1.4);
\draw[very thick] (10-0.2,0.5) -- (11+0.2,0.5);
\draw[very thick] (10-0.2,1) -- (11+0.2,1);
\draw[very thick] (10-0.2,1.5) -- (11+0.2,1.5);
\draw[draw=black, very thick, fill=white] (10,-0.2) rectangle ++(1,1.9);
\draw[very thick] (13-0.2,0.5) -- (14+0.2,0.5);
\draw[draw=black, very thick, fill=white] (13,-0.2) rectangle ++(1,0.9);
\draw[very thick, draw=red] (1,0) .. controls (1.5,0) and (1.5,1.5) .. (2,1.5);
\end{tikzpicture}
\caption{An illustration for Lemma \ref{MainLemma}.}
\label{TrappedCase}
\end{figure}
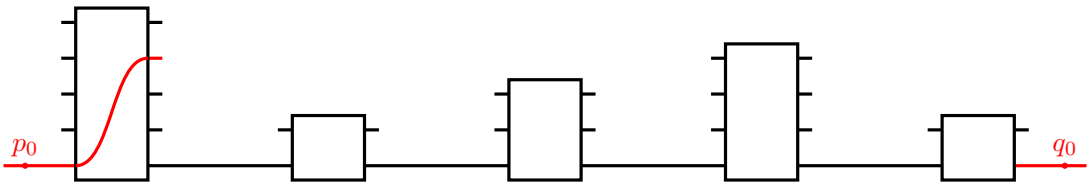

By Lemma \ref{Ground1}, the part of~$\mathrm{LC}(\U; x_1)$ located from~$p_0$ to~$q_0$ lies within the castle~${\rm Cas}(C_1, x_1)$. 
Therefore, since $d>1$, the floor $F_1$ shares at least one crossing with~$F_0$ that lies in the complement of the braid box corresponding to $\a_1$.
Let $j \in \{2,3,\ldots,r\}$ be the minimum number such that $F_1$
intersects~$\a_j$. If $j \geq 3$, then $F_0$ and $F_1$ bound a trap of the castle~${\rm Cas}(C_1, x_1)$. Hence, since ${\rm Cas}(C_1, x_1)$ is appropriate, one has $j=2$. 
Thus, the two circles in $\C$ containing $F_0$ and $F_1$ share at least two arcs in $\A$. This contradicts the fact that $(\C, \A)$ is knitted. The claim is proved.

Similar arguments show that for all arcs $\a \in \{\a_2, \a_3, \ldots, \a_r\}$, 
the index of the ending point of the strand~$s(\a,1)$ is equal to~$1$.
Hence, if~$\mathrm{LC}(\U; x_1)$ intersects an arc $\a \in \A$, then one has~$\a \in \{\a_1, \a_2, \ldots, \a_r\}$. Therefore, the case $i=1$ is complete.

Second, suppose $i=2$. 
Recall that $\U^{1} = \U_1 \backslash \mathrm{LC}(\U_1; x_1)$ is the link diagram obtained from~$\U_{1}$ by deleting the link component $\mathrm{LC}(\U_1; x_1)$.
Let $(\C_1,\A_1)$ and $\p_1$ be the template and the braid placement induced from $(\C, \A)$ and $\p$, respectively, by the transformations that one applied in obtaining $\U_1$ from $\D$ and this link component deletion.
Since the template~$(\C,\A)$ is knitted, $(\C_1, \A_1)$ is knitted too. Therefore, we can apply the same arguments as above. 

The case $i>2$ is similar. The lemma is proved.
\end{proof}

Recall that we aim to prove the inequality~$t^-(\U) \leq t^-(\U^\ast)$.

Let $\a \in \A$. Let $r = (r_1, r_2, \ldots, r_{\left\lVert \a \right\rVert-1}) \in \{1,-1\}^{\left\lVert \a \right\rVert-1}$ be such that~$\p(\a)$ is $r$-homogeneous. Let $i_1,i_2,\ldots,i_m \in \{1,2,\ldots,\left\lVert \a \right\rVert\}$ be such that 
$$1=i_1<i_2< \ldots < i_m=\left\lVert \a \right\rVert$$ 
and for all~$k \in \{1,2,\ldots,m-1\}$, one has $r_{i_k}=r_{i_k+1}=\ldots=r_{i_{k+1}-1}$ and $r_{i_{k+1}-1} \neq r_{i_{k+1}}$. 
By~the definition of~locally $-$twisted braids, for each $k \in \{1,2,\ldots,m-1\}$ with $r_{i_k}=-1$, 
the uniform layer of index $k$ of $\p(\a)$
admits a decomposition of the form~$v_{k,1}v_{k,2}v_{k,3}$ such that both~$v_{k,1}$ and~$v_{k,3}$ represent $\Delta_{i_k,i_{k+1}}^{-1}$ (see~Figure~\ref{ExampleD}).

Let $d := \gamma(\U)$. Recall that the link diagram determined by~$\p_{d}$ is~$\U_{d} = \U$. Given a braid word~$w$, denote by~$|w|$ the length of $w$.

\begin{Lemma}\label{LowerBound}
For each $k \in \{1,2,\ldots,m-1\}$ with $r_{i_k}=-1$, one neither smoothed nor flipped at least $2|v_{k,1}|$ negative crossings of~$[\p(\a)]$ in obtaining $[\p_{d}(\a)]$.
\end{Lemma}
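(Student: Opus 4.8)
The plan is to fix the arc $\a\in\A$ and an index $k\in\{1,\dots,m-1\}$ with $r_{i_k}=-1$, to set $N:=i_{k+1}-i_k+1$ for the number of strands that the uniform layer of index $k$ acts on, and to exhibit $2|v_{k,1}|$ distinct negative crossings of $[\p(\a)]$ that the branching process producing $\U$ leaves untouched. Since both $v_{k,1}$ and $v_{k,3}$ represent $\Delta_{i_k,i_{k+1}}^{-1}$ and consist only of negative letters, one has $|v_{k,1}|=|v_{k,3}|$, and the natural candidates are the crossings of $[\p(\a)]$ occurring inside a fixed representative of the two syllables $v_{k,1}$ and $v_{k,3}$ of the layer word. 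I would account for these crossings phase by phase, running along the strands in positions $i_k,\dots,i_{k+1}$ in the order in which the link components $\mathrm{LC}(\U;x_1),\mathrm{LC}(\U;x_2),\dots$ pass through this braid box. The engine of the induction is Lemma~\ref{Fragment}: once an already-visited strand is removed, the induced braid word, restricted to the layer, is again locally $-$twisted — now on $N-1$ strands — and, up to far commutativity, its two half-twist syllables are the old $v_{k,1}$ and $v_{k,3}$ with that strand deleted, so the count obtained for the first strand can be repeated and summed.

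For the first strand I would combine the geometry supplied by Lemma~\ref{MainLemma} and Corollary~\ref{Ground2} with the case analysis carried out for $\U^\ast$ in the First step. By Lemma~\ref{MainLemma} every strand of $[\p(\a)]$ returns to its own Seifert segment in $\U$, and by Corollary~\ref{Ground2} every link component of $\U$ is a simple closed plane curve; since the generators $\sigma_{i_k}^{-1},\dots,\sigma_{i_{k+1}-1}^{-1}$ move no strand outside the block of positions $i_k,\dots,i_{k+1}$, each such strand enters and leaves the layer-$k$ part of $[\p_d(\a)]$ at the same position. Running the maximal descending (resp.\ ascending) path along the first strand that visits the block — the rule being descending or ascending according to whether the relevant Seifert circle is clockwise or counterclockwise — Lemma~\ref{Fragment} places the distinguished subsequences $\sigma_{i_k}^{-1}\cdots\sigma_{i_{k+1}-1}^{-1}$ at the front of $v_{k,1}$ and $\sigma_{i_{k+1}-1}^{-1}\cdots\sigma_{i_k}^{-1}$ at the back of $v_{k,3}$, and the claim is that this path is carried through the crossings of the two half-twist syllables without ever branching, because (by the orientation of a negative half-twist together with the constraint $L_i=0$ of Lemma~\ref{Ground1}) the traversed strand meets each of these crossings on its overpassing side; hence none of them is smoothed or flipped. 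The inductive step repeats this after the component just processed is deleted, Lemma~\ref{Fragment} guaranteeing that the reduced layer word still carries the half-twist syllables that prevent the next path from branching.

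The crux, I expect, is precisely this non-branching claim for a \emph{later} strand of the block: after several link components have been removed and $\p$ has been replaced by an induced braid placement, one must rule out that the travel is diverted into the interior of a half-twist syllable and branches there. This is where the knitted hypothesis on $(\C,\A)$ and the absence of traps in the appropriate castle must be invoked — in the same way as in the proof of Lemma~\ref{MainLemma} — to forbid the return-to-position configuration a stray branching would produce, in which two circles of $\C$ would be joined by two arcs of $\A$; one also has to verify, at every stage of the induction, that the two syllables survive the strand deletions up to far commutativity, which is exactly what Lemma~\ref{Fragment} delivers but which has to be applied carefully. Granting the lemma, summing over all $k$ with $r_{i_k}=-1$ and over all $\a\in\A$, and comparing with the count of smoothed negative crossings obtained for $\U^\ast$ in the First step, yields $t^-(\U)\le t^-(\U^\ast)$ and completes the Second step.
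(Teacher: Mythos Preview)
Your overall plan is aligned with the paper's: process the strands of the layer one at a time (inductively, deleting each visited strand and invoking that the induced placement is again locally $-$twisted), use Lemma~\ref{Fragment} for the structure of the half-twist syllables, use Lemma~\ref{MainLemma} for the return-to-position constraint, and sum the contributions $2(N-1)+2(N-2)+\cdots+2$.

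There is, however, a genuine gap in your non-branching mechanism. You assert that the path ``is carried through the crossings of the two half-twist syllables without ever branching, because \ldots\ the traversed strand meets each of these crossings on its overpassing side.'' This is correct for $v_{k,1}$: the strand entering the layer at position $i_k$ runs through the Fragment subsequence $\sigma_{i_k}^{-1}\cdots\sigma_{i_{k+1}-1}^{-1}$ always on the left, hence overpassing, side. But after $v_{k,1}$ that strand sits at position $i_{k+1}$, and the Fragment subsequence $\sigma_{i_{k+1}-1}^{-1}\cdots\sigma_{i_k}^{-1}$ in $v_{k,3}$ is traversed from position $i_{k+1}$ on the \emph{right} side of every crossing, i.e.\ on the underpass. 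Under the descending rule these crossings are ascending and the process branches. This is precisely what happens already in the construction of $\U^\ast$ in the First step: at position $i_{k+1}$ one \emph{smooths} the $\sigma_{i_{k+1}-1}^{-1}$'s and then \emph{flips} one in order to descend. So the crossings of $v_{k,3}$ are not, in general, left untouched, and your route to the count $2|v_{k,1}|$ via ``no branching in both syllables'' breaks down.

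The paper sidesteps this by arguing statically about the final curve $t=\mathrm{LC}(\U;x_i)$ rather than about the branching process. Lemma~\ref{Fragment} forces $t$ through $i_{k+1}-i_k$ crossings of $[v_{k,1}]$, carrying it from position $i_k$ up to position $i_{k+1}$; then Lemma~\ref{MainLemma} forces $t$ to exit the braid box again at position $i_k$, and the return costs a further $i_{k+1}-i_k$ crossings of the form $\sigma_{i_{k+1}-1}^{-1},\ldots,\sigma_{i_k}^{-1}$ somewhere in $[\p(\a)]$ --- not necessarily inside $v_{k,3}$, and not necessarily unflipped, but certainly \emph{unsmoothed}, since $t$ passes through them in $\U$. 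Repeating for the component through the $j$-th Seifert segment (after the earlier strands are removed, which guarantees distinctness) and summing gives the $2|v_{k,1}|$ surviving layer-$k$ crossings; all of them are negative crossings of $\D$ because every crossing in that layer is. Note that the ``nor flipped'' clause in the lemma is stronger than what this argument literally yields and stronger than what is used downstream: the inequality $t^-(\U;\a)\le t^-(\U^\ast;\a)$ only needs ``not smoothed.'' Your ingredients are right; what has to change is replacing the $v_{k,3}$ non-branching claim by the return-to-position count.
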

\begin{proof}
Without loss of generality, one has $r_1=-1$. First, suppose $k = 1$.

Let $i \in \{1,2, \ldots, d\}$ be the index such that $\mathrm{LC}(\U; x_i)$ contains the starting point of the first Seifert segment of~$[\p(\a)]$.

The link component $\mathrm{LC}(\U; x_i)$ 
determines a curve $t$ on the braid diagram~$[\p(\a)]$.
It~follows from Lemma \ref{Fragment} that $t$ contains crossings of $[v_{1,1}]$ of the form~$\sigma_1^{-1}, \sigma_2^{-1}, \ldots, \sigma_{i_2-1}^{-1}$.
Lemma \ref{MainLemma} implies that $t$ exits~$[\p(\a)]$ through the ending point of the first Seifert segment of~$[\p(\a)]$. Therefore, $t$ contains at least $i_2-1$ additional crossings of~$[\p(\a)]$ of the form~$\sigma_{i_2-1}^{-1}, \sigma_{i_2-2}^{-1}, \ldots, \sigma_{1}^{-1}$. Therefore, one neither smoothed nor flipped at least $2(i_2-1)$ negative crossings of~$[\p(\a)]$ in obtaining $[\p_{d}(\a)]$.

For each $j \in \{2,3, \ldots, i_2-1\}$, we apply similar arguments concerning the link component of $\U$ that contains the starting point of the index $j$ Seifert segment of~$[\p(\a)]$.
Therefore,
one neither smoothed nor flipped at least 
\begin{align*}
2(i_2-1) + 2(i_2-2) + \ldots + 2 = 2 i_2(i_2-1) = 2|v_{1,1}|
\end{align*}
negative crossings of~$[\p(\a)]$ in obtaining $[\p_{d}(\a)]$. 

For $k>1$ such that $r_{i_k}=-1$, the argument is similar. The lemma is proved.
\end{proof}

Denote by $t^-(\U^\ast; \a)$ (resp.\  $t^-(\U; \a)$) the number of negative crossings of~$[\p(\a)]$ that one smoothed in obtaining $\U^\ast$ (resp.\  $\U$). Denote by $n(\p(\a))$ the number of negative crossings of~$\p(\a)$. Let $J = \{k \in \{1,2,\ldots,m-1\} \mid r_{i_k}=-1\}$.
One has
\begin{align}\label{MRI1}
t^-(\U^\ast; \a) + \sum\limits_{k \in J
} 2|v_{k,1}| = n(\p(\a)).
\end{align}
Lemma \ref{LowerBound} implies
\begin{align}\label{MRI2}
\sum\limits_{k \in J
} 2|v_{k,1}| \leq n(\p(\a)) - t^-(\U; \a).
\end{align}
By combining \eqref{MRI1} and \eqref{MRI2}, one has $t^-(\U; \a) \leq t^-(\U^\ast; \a)$. Therefore,
\begin{align*}
t^-(\U) = \sum_{a \in \A} t^-(\U; \a) \leq \sum_{\a \in \A} t^-(\U^\ast; \a) = t^-(\U^\ast). 
\end{align*}
The second step is complete.

\subsection{Third step}

Let $\U$ be a leaf node of $\T$. Recall that the contribution of $\U$ to the skein polynomial~$\P(\D; a,z)$ is
\begin{align*}
(-1)^{t^-(\U)} z^{t(\U)} a^{\omega(\U) - \omega(\D)} ((a-a^{-1})z^{-1})^{\gamma(\U)-1}.
\end{align*}
Therefore, the highest $a$-degree term that $\U$ contributes to~$\P(\D; a,z)$ is
\begin{align*}
(-1)^{t^-(\U)} z^{t(\U) - \gamma(\U) + 1} a^{\omega(\U) - \omega(\D) + \gamma(\U) - 1}.
\end{align*}
Suppose that $\U$ contributes to the the highest $a$-degree term. Recall that in the first step, we showed that there exits a leaf node $\U^\ast$ of $\T$ that contributes to the the highest $a$-degree term. 
Besides, 
in the second step, we showed that $\gamma(\U) = s(\D)$, $\omega(\U) = 0$, and~$t^-(\U) \leq t^-(\U^\ast)$. Therefore, $\U$ contributes to the term 
\begin{align}\label{finalterm}
(-1)^{t^-(\U)} z^{t(\U^\ast)-s(\D)+1} a^{-\omega(\D) + s(\D)-1}    
\end{align}
if and only if $t(\U) = t(\U^\ast)$. 

Suppose that $t(\U) = t(\U^\ast)$. Denote by $p(\D)$ the number of positive crossings of $\D$. Given a leaf node $\V$ of $\T$, denote by $t^+(\V)$ the number of positive crossings of $\D$ that one smoothed in obtaining~$\V$. In the first step, we showed that $t^+(\U^\ast) = p(\D)$. One has
\begin{align*}
t^-(\U^\ast) = t(\U^\ast) - t^+(\U^\ast) = t(\U) - p(\D) \leq t(\U) - t^+(\U) = t^-(\U).
\end{align*}
Hence,
$t^-(\U) = t^-(\U^\ast)$. Therefore, the contribution of $\U$ to the term \eqref{finalterm} has the same sign~$(-1)^{t^-(\U^\ast)}$ as of $\U^\ast$. Thus, this term does not cancel. This shows that $$E = -\omega(\D) + s(\D)-1.$$ The third step is complete. Theorem \ref{Theorem2} is proved.

\begin{spacing}{1.5}
\end{spacing}

\noindent {\bf Acknowledgements}. The author is grateful to Andrei Malyutin for helpful comments and suggestions.

\end{document}